\documentclass[11pt,reqno]{amsart}

\usepackage{amsmath,amsfonts,amssymb,amscd,verbatim,delarray,verbatim}

\DeclareMathSymbol{\twoheadrightarrow}  {\mathrel}{AMSa}{"10}

\def\Q{{\mathbb Q}}
\def\Z{{\mathbb Z}}
                             \def\NN{{\mathbb N}}
\def\C{{\mathbb C}}
\def\wt{\mathbf{wt}}

\def\RR{{\mathbb R}}
\def\F{{\mathbb F}}
\def\P{{\mathbb P}}
             \def\PP{{\mathfrak P}}

             \def\Fr{\mathrm{Fr}}

\def\f{{\tilde F}}
                     \def\f0{{\mathfrak f}}

\def\A8{{\mathbf A}_8}

\def\Gal{\mathrm{Gal}}

                              \def\ord{\mathrm{ord}}
                               \def\mult{\mathrm{mult}}

\def\End{\mathrm{End}}
\def\Aut{\mathrm{Aut}}

\def\Hom{\mathrm{Hom}}

                                          \def\tate{\mathrm{tate}}

\def\I{\mathrm{Id}}

\def\fchar{\mathrm{char}}
\def\GL{\mathrm{GL}}

\def\A{\mathbf{A}}
\def\T{{\mathcal T}}
\def\B{{\mathfrak B}}

\def\dim{\mathrm{dim}}
                           \def\rk{\mathrm{rk}}
                           
                           \def\Sl{\mathrm{Slp}}
\def\Oc{{\mathcal O}}

\newtheorem{thm}{Theorem}[section]
\newtheorem{lem}[thm]{Lemma}
\newtheorem{cor}[thm]{Corollary}
\newtheorem{prop}[thm]{Proposition}
\theoremstyle{definition}
\newtheorem{defn}[thm]{Definition}

\newtheorem{rem}[thm]{Remark}

\newtheorem{rems}[thm]{Remarks}
        \newtheorem{sect}[thm]{}
\hyphenation{equi-var-i-ant}

\title[Tate classes  on Abelian varieties]{Tate classes on self-products of Abelian varieties over finite fields}

\author[Yuri\ G.\ Zarhin]{Yuri\ G.\ Zarhin}
\address{Department of Mathematics, Pennsylvania State University,
University Park, PA 16802, USA}

\thanks{The author  was partially supported by Simons Foundation Collaboration grant   \# 585711.}
\email{zarhin\char`\@math.psu.edu}
\thanks{}
\begin{document}
\begin{abstract}
We deal with $g$-dimensional abelian varieties $X$ over finite fields. We prove that there is an universal constant (positive integer) $N=N(g)$ that depends only on $g$ that enjoys the following property. If  a certain self-product of $X$ carries an {\sl exotic} Tate class then the self-product  $X^{2N}$ of $X$  also carries an exotic Tate class. 
This gives a positive answer to a question of Kiran Kedlaya.
\end{abstract}
\maketitle

\section{Introduction}
Let $X$ be  an abelian  variety of positive dimension $g$ over  a finite field $k=\F_q$ of characteristic $p$ (where $q$ is a power of  $p$),  $\Fr_X$ the
 Frobenius endomorphism of  $X$, and $\mathcal{P}_X[t]\in \Z[t]$ the characteristic  polynomial of $\Fr_X$, which is a degree $2g$ monic polynomial with integer coefficients.
 \cite{Mumford,Tate1}. Let $L=L_X$ be the splitting field of  $\mathcal{P}_X[t]$ over the field $\Q$ of rational numbers and therefore is a number field. Since $\deg(\mathcal{P}_X)=2g$,  the degree $[L_X:\Q]$ divides $(2g)!$. (In fact, one may prove that
  $[L_X:\Q]$  divides  $2^g g!$, see below). We write $R_X$ for
  the set of  eigenvalues of $\Fr_X$; clearly,  $R_X$ coincides with the set of roots of $\mathcal{P}_X[t]\in \Z[t]$ and is viewed as a certain finite subset of  $L_X^{*}$.
  Clearly, $R_X$ consists of algebraic  integers and $\#(R_X) \le 2g$. (The equality holds if and only if $\mathcal{P}_X[t]$ has no repeated roots.) By a classical theorem of A.Weil \cite{Mumford},
   all algebraic numbers $\alpha\in R_X$ have the same archimedean value $\sqrt{q}$. 
 In addition, $\alpha \mapsto q/\alpha$ is a {\sl permutation} of  $R_X$. If $\alpha$ is a root of $\mathcal{P}_X[t]$ (i.e., $\alpha\in R_X$) then we write $\mult_X(\alpha)$ for its multiplicity. It is well known that if $\alpha\in R_X$ then
 \begin{equation}
 \label{multR}
   \mult_X(\alpha)=\mult_X(q/\alpha); \ \text{ if } \alpha=q/\alpha \ \text{ then } \mult_X(\alpha) \ \text{ is even}.
   \end{equation}
   In particular, the constant term $\prod_{\alpha\in R_X}\alpha^{\mult_X(\alpha)}$ of $\mathcal{P}_X[t]$ is $q^g$.
   
   The Galois group $\Gal(L_X/\Q)$ of $L_X/\Q$ permutes elements of $R_X$ and
   
   \begin{equation}
   \label{sigmaRX}
   \mult_X(\sigma(\alpha))=\mult_X(\alpha) \ \forall \sigma\in \Gal(L_X/\Q) , \alpha\in R_X.
   \end{equation}
   
   In this paper we continue our  study of multiplicative relations between elements of $R_X$ that was started in \cite{ZarhinK3,LenstraZarhin,ZarhinEssen,ZarhinJPAA}.
   (In \cite{ZarhinK3,LenstraZarhin} we concentrated on abelian varieties with rather special type of Newton polygons; in \cite{ZarhinEssen,ZarhinJPAA} we studied
   abelian varieties of small dimension).
   In order to state  results of the present paper, we need the following definitions.
   
   \begin{defn}
   An integer-valued function $e \colon R_X \to \Z$ is called
   \begin{itemize}
   \item[(i)]
   {\bf admissible} if there exists an integer $d$ such that
   \begin{equation}
  \label{relationM}
  \prod_{\alpha\in R_X}\alpha^{e(\alpha)}=q^d;
  \end{equation}
  Such a $d$ is called the {\bf degree} of $e$ and is denoted $\deg(e)$. 
  
  The nonnegative integer
  $\sum_{\alpha\in R_X}|e(\alpha)|$  is called the {\bf weight} of $e$ and denoted $\wt(e)$.
   \item[(ii)]
   {\bf trivial}  if
  \begin{equation}
  \label{relationT}
  e(\alpha)=e(q/\alpha) \ \forall \alpha\in R_X;  \ e(\beta)\in 2\Z \ \forall   \beta\in R_X  \text{ with } \beta^2=q.
  \end{equation}
   \end{itemize}
    \end{defn}
    
    \begin{defn}
   An {\sl admissible} integer-valued function $e \colon R_X \to \Z$ is called
   {\bf reduced} if it enjoys the following properties:
  \begin{itemize}
  \item[(i)]
  $\deg(e) \ge 1$ and all $e(\alpha)\ge 0$.
   \item[(ii)]
  If $\alpha \in R_X$ and $\alpha \ne q/\alpha$  then either $e(\alpha)=0$ or $e(q/\alpha)=0$.
   \item[(iii)]
   $e(\beta)=0$ or $1$  $\forall \beta\in R_X$ with $\beta^2=q$.
  \end{itemize}
   \end{defn}

 \begin{rems}
 \begin{itemize}
 \item[]
 \item[(i)] It follows from \eqref{multR} that  $\alpha \mapsto \mult_X(\alpha)$ is a trivial admissible function of degree $g$ and weight $2g$.
 \item[(ii)] Every trivial function is admissible.
 \item[(iii)] If $e \colon R_X \to \Z$ is admissible then it follows from Weil's theorem that
 \begin{equation}
 \label{degreeE}
 2\deg(e)=\sum_{\alpha\in R_X}e(\alpha).
 \end{equation}
 \item[(iv)] If $e \colon R_X \to \Z$ is reduced admissible then it follows from \eqref{degreeE} that
 \begin{equation}
 \label{weightE}
 \wt(e)=2\deg(e).
 \end{equation}
 \end{itemize}
 \end{rems}
  
Our first main result is the following assertion.

\begin{thm}
\label{mainRelation}
 Let $g$ be a positive integer.
There exists a positive integer $N=N(g)$ that depends only on $g$ and enjoys the following property.

Let $X$ be a $g$-dimensional abelian variety  over a finite field $k$  such that there exists a nontrivial
admissible function $R_X \to \Z$.  Then there exists a reduced  admissible function of degree $\le N(g)$.

\end{thm}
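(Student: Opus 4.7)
My proof plan is in two stages. First, a canonical \emph{reduction map} converts any admissible function into one satisfying conditions (ii) and (iii) of reducedness, at a cost of at most $+2$ in weight. Second, the lattice of admissible functions modulo the sublattice of trivial ones is cut out by integer linear equations of height depending only on $g$, so a Siegel-type lattice argument produces a nontrivial admissible function of weight bounded in $g$ alone. Combining the two stages yields the desired $N(g)$.

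For the first stage, given an admissible $e\colon R_X\to\Z$ I set $t_e(\alpha):=\min(e(\alpha),e(q/\alpha))$ for $\alpha\ne q/\alpha$, $t_e(\beta):=2\lfloor e(\beta)/2\rfloor$ for $\beta^2=q$, and $\tilde e:=e-t_e$. Using $\alpha\cdot(q/\alpha)=q$ and $\beta^2=q$, one checks that $t_e$ is trivial admissible; hence $\tilde e$ is admissible, non-negative, automatically satisfies (ii) and (iii), and vanishes precisely when $e$ is trivial. If $e$ is nontrivial, then $\tilde e$ is a nonzero non-negative admissible function, so \eqref{degreeE} forces $\deg(\tilde e)\ge 1$, i.e., $\tilde e$ is reduced admissible. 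A coordinatewise triangle inequality gives $\wt(\tilde e)\le\wt(e)+\#\{\beta\in R_X:\beta^2=q\}\le\wt(e)+2$, and hence $\deg(\tilde e)\le\tfrac12(\wt(e)+2)$.

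For the second stage let $A\subseteq\Z^{R_X}$ be the sublattice of admissible functions and $T\subseteq A$ the sublattice of trivial ones, so the hypothesis reads $A\ne T$. The lattice $T$ is cut out by $e(\alpha)-e(q/\alpha)=0$ and parity conditions at fixed points, with coefficients in $\{0,\pm 1\}$. For $A$ I would invoke the Honda--Tate/Newton polygon structure: each $\alpha\in R_X$ is determined modulo roots of unity in $L_X$ by its slope vector $(s_v(\alpha))_{v\mid p}$ with $s_v(\alpha)\in\Q\cap[0,1]$ of denominator bounded in $g$, and admissibility translates to
\[
\sum_{\alpha\in R_X}e(\alpha)\bigl(2s_v(\alpha)-1\bigr)=0 \quad\text{for every place } v\mid p \text{ of } L_X.
\]
Clearing a common denominator depending only on $g$ gives an integer linear system of bounded height in $\le 2g$ variables, in at most $[L_X:\Q]\le 2^g g!$ equations. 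Standard lattice arguments (a Siegel-type lemma, or an explicit Smith normal form analysis of the integer matrix encoding the pair $(A,T)$) then furnish $e\in A\setminus T$ with $\wt(e)\le W(g)$; feeding this $e$ into the reduction map of the first stage yields a reduced admissible function of degree $\le N(g):=\tfrac12(W(g)+2)$.

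The crucial obstacle is the uniformity in the second stage: I must verify that, as $X$ ranges over all $g$-dimensional abelian varieties over all finite fields, the integer linear datum defining $(A,T)$ takes values in a finite set depending only on $g$. This rests on the finiteness of $g$-dimensional Newton polygons (bounding the slope denominators), combined with the bound $[L_X:\Q]\mid 2^g g!$ (bounding the number of $p$-adic places of $L_X$, hence the number of equations in the admissibility system). Once the finitely many shapes of $(A,T)$ are catalogued, Siegel's lemma applied in each case supplies the uniform $W(g)$, hence $N(g)$, depending only on $g$.
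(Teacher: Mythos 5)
Your stage 1 is correct and is in fact a cleaner, more symmetric version of what the paper does in Lemma \ref{nonTrivRed} (and it absorbs the ``small field'' case of Lemma \ref{rootsR} as well): subtracting the trivial part $t_e$ directly produces a nonnegative admissible function satisfying conditions (ii) and (iii), and nonvanishing exactly when $e$ is nontrivial. Your stage 2 is also the same strategy as the paper's (Section \ref{RelationProof}): encode each root by its vector of $p$-adic slopes, observe that the slope data ranges over a finite set depending only on $g$ (Lemma \ref{SLP} plus $[L_X:\Q]\mid 2^g g!$), and extract a bounded-height relation.

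However, stage 2 as written has a genuine gap: the assertion that admissibility \emph{translates to} the system $\sum_{\alpha}e(\alpha)(2s_v(\alpha)-1)=0$ is false as a biconditional. The slope map $w_X$ of Remark \ref{ordP} has kernel exactly the roots of unity in $\Gamma(X,k)$, so a solution of your linear system only yields $\prod_\alpha \alpha^{2e(\alpha)}=\zeta\, q^{\sum e(\alpha)}$ for some root of unity $\zeta$ (and $\sum e(\alpha)$ need not be even). Thus the lattice cut out by your equations is strictly larger than the lattice $A$ of admissible functions, and a Siegel/Smith-normal-form solution of the slope system need not be admissible at all. To repair this you must multiply the solution by (twice) the order of $\zeta$, and for the resulting weight to be bounded in terms of $g$ alone you need a bound on orders of roots of unity in $\Gamma(X,k)$ depending only on $g$ --- this is the paper's Lemma \ref{rootsG}, proved by a Minkowski-type argument applied to $\Fr_X$ acting on $\ell$-torsion, and it does not follow from the Newton-polygon finiteness you invoke. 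A second, related defect: the directions supported on square roots of $q$ are invisible to the slope equations (their slope vector is $\tfrac12\mathbf{1}$, so $2s_v-1=0$), so your small solution of the system could be supported on $R_X^{\iota}$, in which case every admissible multiple of it is trivial and the construction produces nothing; the paper avoids this by excising $R_X^{\iota}$ before applying the finiteness argument (Lemma \ref{multiply} and Subsection \ref{skeleton}). Both defects are fixable, but the bound on root-of-unity orders is a genuinely missing ingredient of your argument.
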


Our main tool in the proof of Theorem \ref{mainRelation} is the multiplicative (sub)group $\Gamma(X,k)\subset L_X^{*}$ generated by  $R_X$, which was first introduced in  \cite{ZarhinIzv79,ZarhinInv79} (see also \cite{ZarhinK3,LenstraZarhin,ZarhinEssen,ZarhinJPAA}).

\begin{defn}
\begin{itemize}
\item[]
\item[(i)]
We  say that $k=\F_q$ is {\bf small} with respect to $X$ if there exist  {\sl distinct } $\alpha_1, \alpha_2 \in R_X$ such that
$\alpha_1/\alpha_2$ is a {\sl root of unity}.
\item[(ii)]
We say that  $k$ is {\bf sufficiently large} with respect to $X$ if $\Gamma(X,k)$ does {\sl not} contain  roots of unity except $1$ (see \cite{ZarhinEssen,ZarhinJPAA}).
\end{itemize}
\end{defn}

\begin{rems}
\label{smallN}
\begin{itemize}
\item[]
\item[(i)] If $k$ is {\sl not} small with respect to $X$ then there is at most one $\beta\in R_X$ with $\beta^2=q$.

\item[(ii)] If $k$ is sufficiently large with respect to $X$ then it is {\sl not} small.
\end{itemize}
\end{rems}

The role of  $\Gamma(X,k)$ is explained by the following statement.

\begin{lem}
\label{rankGamma}
Suppose that $k$ is not small with respect to $X$. Then the following three conditions are equivalent.
\begin{itemize}
\item[(i)] There exists a nontrivial admissible function $R_X \to \Z$.
\item[(ii)] There exists a reduced admissible function $R_X \to \Z$.
\item[(iii)] The rank of $\Gamma(X,k)$ does not exceed $\lfloor\#(R_X)/2\rfloor$.
\end{itemize}
\end{lem}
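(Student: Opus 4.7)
The plan is to recast everything in terms of the homomorphism $\phi\colon\Z^{R_X}\to L_X^*$, $e\mapsto\prod_{\alpha\in R_X}\alpha^{e(\alpha)}$, whose image is $\Gamma(X,k)$. Write $A=\phi^{-1}(\langle q\rangle)$ for the group of admissible functions and $A_0=\ker\phi$ for its degree-zero part, so that $\Z^{R_X}/A_0\cong\Gamma(X,k)$. Let $\tau$ denote the involution $\alpha\mapsto q/\alpha$ on $R_X$, let $n_1$ be the number of two-element $\tau$-orbits, and let $n_2\in\{0,1\}$ be the number of fixed points of $\tau$ (using Remark \ref{smallN}(i), since $k$ is not small). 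Then $\#R_X=2n_1+n_2$, so $\lfloor\#R_X/2\rfloor=n_1$, while a direct parametrization yields $\rk T=n_1+n_2$. Combining $\rk A=\rk A_0+1$ (since $\phi(A)=\langle q\rangle\cong\Z$) with $\rk A_0=\#R_X-\rk\Gamma(X,k)$ gives
\[
\rk(A/T)=n_1+1-\rk\Gamma(X,k).
\]

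Next I would establish (i)$\Leftrightarrow$(iii) by proving $A/T$ is torsion-free. The key sub-claim is: if $e\in A$ satisfies $e(\alpha)=e(q/\alpha)$ for all $\alpha$, then $e\in T$. Indeed, under this $\tau$-invariance, $\prod\alpha^{e(\alpha)}$ equals $q^N\cdot\beta^{e(\beta)}$ where the sum defining $N$ runs over two-orbits and $\beta$ is the unique (or no) element with $\beta^2=q$; this can be a power of $q$ only when $e(\beta)\in 2\Z$. An elementary induction on $n$ then gives $ne\in T\Rightarrow e\in T$. With $A/T$ torsion-free, (i) amounts to $A/T\neq 0$, equivalently $\rk(A/T)\geq 1$, which by the displayed formula is exactly (iii).

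For (i)$\Rightarrow$(ii) I would argue constructively. Take $e\in A\setminus T$; the preceding sub-claim shows $e$ is not $\tau$-invariant, so $f=e-e\circ\tau$ is a nonzero, $\tau$-anti-invariant admissible function of degree zero with $f(\beta)=0$. Set $\tilde e(\alpha)=\max(f(\alpha),0)$; then $g=|f|$ is $\tau$-invariant with $g(\beta)=0$, hence trivial. The identity $2\tilde e=f+g$ yields
\[
\prod_{\alpha}\alpha^{2\tilde e(\alpha)}=\Bigl(\prod_{\alpha}\alpha^{f(\alpha)}\Bigr)\Bigl(\prod_{\alpha}\alpha^{g(\alpha)}\Bigr)=1\cdot q^{\deg g}=q^{\deg g},
\]
so $2\tilde e$ is admissible of degree $\deg g=\sum\tilde e\geq 1$, and by construction it is nonnegative, has at most one of $2\tilde e(\alpha),2\tilde e(q/\alpha)$ nonzero in each two-orbit, and vanishes at $\beta$. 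Hence $2\tilde e$ is reduced. The converse (ii)$\Rightarrow$(i) is immediate: a reduced function that were also trivial would satisfy $e(\alpha)=e(q/\alpha)=0$ on every two-orbit and $e(\beta)\in\{0,1\}\cap 2\Z=\{0\}$, hence vanish, contradicting $\deg e\geq 1$.

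The main obstacle is the admissibility step in the construction. Taking $\tilde e=\max(f,0)$ is the obvious symmetrization, but $\prod\alpha^{\tilde e(\alpha)}$ is only forced to be a square root of $q^{\deg g}$, and a priori it may fail to lie in $\langle q\rangle$. Doubling to $2\tilde e$, and exploiting that $|f|$ is $\tau$-invariant and vanishes at any $\beta$ with $\beta^2=q$ (so that $|f|$ is itself trivial and hence admissible), repairs the parity issue and yields an honest admissible, reduced function. The other delicate step is the torsion-freeness of $A/T$, which is precisely where the hypothesis "$k$ not small" enters through $n_2\leq 1$.
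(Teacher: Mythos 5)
Your proof is correct, and it takes a genuinely different route from the paper's. The paper proves this (as Lemma \ref{rank}) by first establishing Lemma \ref{nonTrivRed} via a somewhat laborious construction (passing to $h_2=2e_0$, partitioning $R_X\setminus R_X^{\iota}$ into sets $A$, $A_1$, $B$), handling the degenerate case $r_X=0$ separately, and then for $r_X\ge1$ doing a generator-counting argument to relate $\rk\Gamma_1(X,k)$ to the existence of a relation among the $\alpha_i$. Your approach replaces all of this with one clean algebraic computation: the exact sequence $0\to A_0\to\Z^{R_X}\to\Gamma(X,k)\to0$ together with $\phi(A)=\langle q\rangle$ and $\rk T=n_1+n_2$ yields $\rk(A/T)=n_1+1-\rk\Gamma(X,k)$ uniformly, with no case split; the hypothesis ``$k$ not small'' is isolated in the torsion-freeness of $A/T$ (via $n_2\le1$), and the degenerate case $r_X=0$ falls out automatically. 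For (i)$\Rightarrow$(ii), your construction $\tilde e=\max(e-e\circ\tau,0)$ is, up to normalization, the same positive-part trick as the paper's $\tilde e(\alpha)=h_2(\alpha)-h_2(q/\alpha)$ on $A_1$ in Lemma \ref{nonTrivRed}, but the identity $2\tilde e=f+|f|$ with $|f|$ trivial makes the admissibility and reducedness of $2\tilde e$ transparent, whereas the paper tracks the same information through an explicit computation with auxiliary exponents $d,n,m$. The one thing your argument does \emph{not} deliver, which the paper's does, is an explicit weight bound $\wt(2\tilde e)\le 2\wt(e)$ (the paper needs this quantitative control for Theorem \ref{mainRelation}); but the Lemma as stated is purely qualitative, so that is not a gap here.

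A small stylistic remark: the ``elementary induction on $n$'' for torsion-freeness is unnecessary --- if $ne\in T$ then $ne(\alpha)=ne(q/\alpha)$ already forces $e(\alpha)=e(q/\alpha)$ over $\Z$, and then the sub-claim (using admissibility of $e$) gives $e\in T$ directly. Also, it is worth saying explicitly that $T\subset A$ (trivial implies admissible), so that ``(i) holds'' is literally ``$A/T\neq0$''; you use this implicitly.
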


Our second main result deals with Tate classes on abelian varieties (see \cite{Tate0,Tate1,Tate94,ZarhinTokyo,ZarhinEssen} and Section \ref{etaleT} below for the definition of these classes
and their basic properties).
Recall that a Tate  class is called {\sl exotic} if it {\sl cannot} be presented as a linear combination of products of divisor classes.

\begin{thm}
\label{mainTate}
 Let $g$ be a positive integer and let $N=N(g)$ be as in Theorem \ref{mainRelation}.
 
 Let $X$ be a $g$-dimensional abelian variety over a finite field $k$ of characteristic $p$.  Assume that
 there exist a positive integer $n$ and a prime $\ell\ne p$ such that
 the self-product $X^n$ of $X$ carries an exotic $\ell$-adic Tate cohomology class.
 
 Then the self-product $X^{2N}$ of $X$ carries an exotic $l$-adic cohomology Tate class for all primes $l\ne p$.
 \end{thm}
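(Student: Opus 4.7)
The plan is to translate the hypothesis into a statement about admissible functions, apply Theorem~\ref{mainRelation}, then translate back. The translation is provided by the dictionary developed in Section~\ref{etaleT} between Tate classes on self-products of $X$ and admissible integer-valued functions on $R_X$: a Tate class of codimension $d$ on $X^n$ corresponds to a non-negative admissible function $e$ of degree $d$ with $\wt(e)\le n$, while Tate classes expressible as $\Q$-polynomials in divisor classes correspond to $\Q_{\ge 0}$-combinations of trivial admissible functions. In particular, $X^n$ admits an exotic Tate class for some $n$ if and only if $R_X$ admits a non-trivial admissible function. This dictionary rests on the K\"unneth formula, the computation of Frobenius eigenvalues on exterior powers of $H^{1}(X)$, and Tate's conjecture for abelian varieties over finite fields; it is manifestly $\ell$-independent, since all of the combinatorics takes place inside $L_X\subset\C$.

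\textbf{Applying Theorem~\ref{mainRelation}.} The $\ell$-adic exotic Tate class on $X^n$ furnished by the hypothesis yields, via the dictionary, a non-trivial admissible function $R_X\to\Z$. Theorem~\ref{mainRelation} then produces a reduced admissible function $\tilde e$ of degree $d$ with $1\le d\le N(g)$. Such an $\tilde e$ is automatically non-trivial: the reduced conditions~(ii),~(iii) combined with the trivial conditions force $\tilde e\equiv 0$ (for $\alpha\ne q/\alpha$ triviality plus reducedness gives $e(\alpha)=e(q/\alpha)=0$, and for $\beta^{2}=q$ we have $e(\beta)\in 2\Z\cap\{0,1\}=\{0\}$), contradicting $\deg(\tilde e)\ge 1$. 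Since $\wt(\tilde e)=2d$, the dictionary realizes $\tilde e$ as an exotic Tate class $\tau$ on $X^{2d}$ of codimension $d$, valid in $l$-adic cohomology for every prime $l\ne p$.

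\textbf{From $X^{2d}$ to $X^{2N}$.} Consider the projection $\pi\colon X^{2N}\to X^{2d}$ onto the first $2d$ factors and the section $s\colon X^{2d}\to X^{2N}$, $x\mapsto(x,0,\ldots,0)$, so that $\pi\circ s=\IT_{X^{2d}}$. The pullback $\pi^{*}\tau$ is a Tate class on $X^{2N}$ of codimension $d$. If it were a $\Q$-polynomial $\sum_{i}P_{i}(D_{i,1},\ldots,D_{i,k_i})$ in divisor classes $D_{i,j}$ on $X^{2N}$, applying $s^{*}$, which carries divisor classes to divisor classes, would present $\tau=\sum_{i}P_{i}(s^{*}D_{i,1},\ldots,s^{*}D_{i,k_i})$ as a polynomial in divisor classes on $X^{2d}$, contradicting the exoticness of $\tau$. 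Hence $\pi^{*}\tau$ is an exotic $l$-adic Tate class on $X^{2N}$ for every $l\ne p$.

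\textbf{Main obstacle.} The only substantive input beyond Theorem~\ref{mainRelation} is the dictionary of Section~\ref{etaleT}, whose harder direction (every non-trivial admissible function really does produce an algebraic exotic Tate class) invokes Tate's conjecture for abelian varieties over finite fields. Once that dictionary is in place, the remainder is formal: the pullback--section trick upgrades $X^{2d}$ to $X^{2N}$, and $\ell$-independence of the conclusion is automatic because the entire translation is controlled by the combinatorics of $R_X\subset L_X^{*}$.
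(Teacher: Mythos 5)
Your overall plan parallels the paper's: translate exotic Tate classes into admissible functions, apply Theorem~\ref{mainRelation}, and translate back. The pullback--section trick is valid and correctly shows that an exotic Tate class on $X^{2d}$ lifts to one on $X^{2N}$ whenever $d\le N$. However, this detour is unnecessary: once you have a reduced admissible function $\tilde e$ of degree $d\le N(g)$, you have $\tilde e(\alpha)\le\wt(\tilde e)=2d\le 2N(g)\le\mult_{X^{2N}}(\alpha)$ directly, so the dictionary (the paper's Lemma~\ref{exceptreduced}, applied with $Z=X^{2N}$) yields an exceptional Tate form on $X^{2N}$ in one step.

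The serious problem is in your \textbf{Main obstacle} paragraph, where you assert that the dictionary ``invokes Tate's conjecture for abelian varieties over finite fields'' in order to produce ``an algebraic exotic Tate class.'' This is a misreading of what a ``Tate class'' means in this paper (Definition~\ref{tateDeinition}): it is a Galois-invariant $\ell$-adic cohomology class, \emph{not} an algebraic cycle class, and its existence is a statement about Galois representations only. The dictionary between such cohomological Tate classes and admissible functions is established unconditionally in Sections~\ref{linAlgebra} and~\ref{TateProof} by pure multilinear algebra (Proposition~\ref{exoticKE}, Lemma~\ref{exceptreduced}, Lemma~\ref{equivTateForms}), using nothing beyond the K\"unneth/exterior-power description of $\mathrm{H}^{*}(\bar X,\Q_\ell)$, Weil's semisimplicity of $\Fr_X$, and the \emph{proved} $d=1$ case of Tate's theorem (to identify $\T_{\ell,1}$ with divisor classes). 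No unproven conjecture enters; if it did, Theorem~\ref{mainTate} would be conditional, which it is not. You should strike the appeal to Tate's conjecture and instead cite the unconditional cohomological dictionary. Relatedly, the exact constraint carried by the dictionary is the pointwise bound $e(\alpha)\le\mult_{X^n}(\alpha)$ for all $\alpha\in R_X$ (equivalently $e(\alpha)\le n\,\mult_X(\alpha)$), of which your $\wt(e)\le n$ is a sufficient but not a necessary condition; it happens to suffice here, but the statement of the dictionary should be the pointwise one.
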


\begin{rem} Theorem \ref{mainTate} gives a positive answer to a question of Kiran Kedlaya, who pointed out
that this result is related to the algorithmic problem of deciding whether or not a given
abelian variety (specified by its Weil polynomial) is {\sl neat} in a sense of \cite[Sect. 3]{ZarhinEssen}, \cite{ZarhinJPAA}).
\end{rem}

Is it possible  to get all  Tate classes on all  self-products of $X$, using only  Tate classses
 of bounded dimension?  In order to answer this question, we need the following result about nonnegative admissible functions.

\begin{thm}
\label{semigroupADM}
Let $g$ be a positive integer. Then there exists a positive even integer $H=H(g)$ that enjoys the following property.

Let $X$ be a $g$-dimensional abelian variety over a finite field $k$.  Then there exist a positive integer $d$
and $d$  nonnegative admissible functions $e_i \colon R_X \to \Z_{+}$ such that:

\begin{itemize}
\item[(i)]
the weight of each $e_i$ does not exceed $H(g)$;
\item[(ii)]
each nonnegative
admissible function $e \colon R_X \to \Z_{+}$ may be presented as a linear combination of $e_1, \dots,  e_d$ with nonnegative integer coefficients.
\end{itemize}
\end{thm}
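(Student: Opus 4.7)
The set $M=M(X,k)$ of nonnegative admissible functions $R_X \to \Z_+$ is the intersection of the positive orthant $\Z_+^{R_X}$ with the subgroup $A\subset \Z^{R_X}$ of admissible functions (those $e$ with $\prod_\alpha \alpha^{e(\alpha)} \in q^{\Z}$). By Gordan's Lemma for affine semigroups, $M$ is finitely generated. Theorem~\ref{semigroupADM} strengthens this to a uniform weight bound $H(g)$ on some generating set, independent of $X$ and $k$.

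First I describe the minimal generators combinatorially: these are the indecomposable elements $e \in M$, i.e., those with no $f\in M$ satisfying $0 < f < e$ coordinatewise. I claim such an $e$ is either a weight-$2$ trivial pair admissible or satisfies the three conditions of a \emph{reduced} admissible function from Definition~$1.2$. Indeed, if $\alpha \ne q/\alpha$ and $e(\alpha), e(q/\alpha) \ge 1$, then $f := \mathbf{1}_\alpha + \mathbf{1}_{q/\alpha} \in M$ satisfies $0 < f \le e$; by indecomposability $f = e$, so $e$ is itself this trivial pair admissible. Similarly, if $\beta\in R_X$ with $\beta^2 = q$ and $e(\beta) \ge 2$, then $f := 2\mathbf{1}_\beta \in M$ forces $e = f$. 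Hence if $e$ is not one of these finitely many weight-$2$ generators, it satisfies conditions (ii) and (iii) of reduced admissibility; condition (i) ($\deg e \ge 1$ and $e \ge 0$) is automatic for nonzero $e \in M$.

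It remains to bound the weight of indecomposable reduced admissibles by $H(g)$. This is strictly stronger than Theorem~\ref{mainRelation}, which produces only one reduced admissible of degree $\le N(g)$ when nontrivial admissibles exist. My plan is to adapt the proof of Theorem~\ref{mainRelation}. First, replace $k$ by a finite extension $k'/k$ of degree bounded in terms of $g$, chosen so that $\Gamma(X,k')$ has no nontrivial roots of unity; such a bound exists because the torsion of $\Gamma(X,k)$ divides the number of roots of unity in $L_X$, which is itself bounded by $[L_X:\Q] \le 2^g g!$. Then $k'$ is sufficiently large with respect to $X$ and Lemma~\ref{rankGamma} applies. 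The cone $C = A_{\RR} \cap \RR_+^{R_X}$ has ambient dimension $\le |R_X| \le 2g$; its extreme rays are generated by primitive lattice vectors $v_i \in A$ corresponding to primitive multiplicative relations among the elements of $R_X$. Bounding the weights of these $v_i$ uniformly in $g$ is the core technical step, extending the methods of Theorem~\ref{mainRelation} and \cite{LenstraZarhin,ZarhinJPAA}. The Hilbert basis of $M$ then lies in the fundamental parallelotope spanned by the $v_i$, so each element has weight at most $\sum_i \wt(v_i)$; descent from $k'$ back to $k$ inflates this by a factor $\le [k':k]$, itself bounded in $g$.

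The main obstacle is the uniform quantitative bound on the weights of primitive multiplicative relations among Weil $q$-numbers --- bounding \emph{all} extreme rays of $C$, rather than merely producing one reduced admissible of bounded degree as in Theorem~\ref{mainRelation}. I expect the required refinement to emerge from a careful tracking, in the proof of Theorem~\ref{mainRelation}, of the three bounds $\#R_X \le 2g$, the rank of $\Gamma(X,k)$, and its torsion.
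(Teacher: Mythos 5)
Your framework (Gordan's Lemma, finite generation of the semigroup of nonnegative admissible functions, reduction to bounding a generating set) matches the paper's, but the proposal stops exactly where the theorem begins: you state that ``the main obstacle is the uniform quantitative bound on the weights of primitive multiplicative relations'' and only ``expect the required refinement to emerge'' from the proof of Theorem \ref{mainRelation}. That obstacle \emph{is} the content of Theorem \ref{semigroupADM}, so as written this is a plan, not a proof. Moreover, the route you propose --- explicit quantitative tracking of $\#R_X$, $\rk\,\Gamma(X,k)$ and its torsion --- is not how either theorem is actually proved. The key idea you are missing is a \emph{finiteness} (rather than quantitative) argument via Newton polygons: the admissibility condition on a nonnegative even-weight $e$ is equivalent (Lemma \ref{F2q}, for $k$ sufficiently large) to $u\cdot v_j=0$ for $j=1,\dots,s$, where $u\in\Z_{+}^m$ encodes $e$, $s=\#(S(p))$ divides $2^g g!$, $m=\#(R_X)\le 2g$, and --- crucially --- the coordinates of the vectors $v_j$ lie in the finite set $2\cdot\Sl(g)-1$ determined by the possible slopes of Newton polygons of $g$-dimensional abelian varieties (Lemma \ref{SLP}). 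Hence as $X$ and $k$ vary, only finitely many semigroups $\ker(\tilde{w}_X)\cap\Z_{+}^m$ can occur, each finitely generated by Lemma \ref{GordanVar}, and $H(g)$ is simply the maximum weight over all these finitely many Hilbert bases. No bound on individual primitive relations is ever computed; the same finiteness device (the finite set of $g$-frames, Remark \ref{finiteF}) is what drives the proof of Theorem \ref{mainRelation}, which is why ``extending its methods'' in the quantitative direction you describe is not the intended refinement.

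Two secondary problems. First, your base-change step does not descend: if $k'/k$ is a finite extension, the semigroup of nonnegative admissible functions over $k$ embeds as a \emph{sub}-semigroup (not a face) of the one over $k'$, so generators over $k'$ yield no generating set, and no weight bound, over $k$; the paper avoids this by working with $w_X$ directly (whose kernel consists of roots of unity by Remark \ref{ordP}(d)) and, in effect, proving the statement under the sufficiently-large hypothesis that is what Theorem \ref{semigroupTate} actually needs. Second, you ignore the parity constraint: a nonnegative admissible function has weight $2\deg(e)$, hence even weight, so the relevant semigroup is the even-weight part of $\ker(\tilde{w}_X)\cap\Z_{+}^m$; the paper must (and does) repair the generating set by pairing odd-weight generators, replacing $F_0$ by the even-weight elements of $F_0\cup(F_0+F_0)$. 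Your parallelotope bound $\sum_i\wt(v_i)$ also tacitly assumes the cone is simplicial, which it need not be; this is fixable by triangulating, but it is another loose end.
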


Theorem \ref{semigroupADM} implies the following assertion.

\begin{thm}
\label{semigroupTate}
Let $g$ be a positive integer. Let $H=H(g)$ be as in Theorem \ref{semigroupADM}.

Let $X$ be a $g$-dimensional abelian variety over a finite field $k$.  Assume that $k$ is sufficiently large w.r.t  $X$.
Let $\ell$ be a prime different from $\fchar(k)$ and
 $n$ be a positive integer. Then every $\ell$-adic Tate cohomology class on $X^n$ may be presented as a linear conbination of products of
 $\ell$-adic Tate cohomology classes of dimension $ \le H(g)$.
%of pullbacks of $\ell$-adic Tate classes on $X^j$ with respect to various projection maps $X^n \to X^j$ with $j \le H(g)$.
\end{thm}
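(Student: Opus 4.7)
\medskip

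\noindent\textbf{Proof plan.}
I pass to $\bar\Q_\ell$-coefficients, set up a monomial basis of cohomology indexed by distributions of Frobenius eigenvalues, and then invoke Theorem~\ref{semigroupADM} to split every monomial Tate class as a cup product of low-dimensional Tate classes.

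Using the semisimplicity of $\Fr_X$ on $H^1(X_{\bar k},\bar\Q_\ell)$, decompose this space as $\bigoplus_{\alpha\in R_X} W_\alpha$ into Frobenius eigenspaces and fix a basis of each $W_\alpha$. Via the exterior algebra structure on $H^\bullet(X_{\bar k},\bar\Q_\ell)=\bigwedge^\bullet H^1(X_{\bar k},\bar\Q_\ell)$ and K\"unneth, one obtains a monomial basis of $H^\bullet(X^n_{\bar k},\bar\Q_\ell)$ consisting of cup products
$$\omega_{(U_1,\dots,U_n)} := p_1^{*}v_{U_1}\cup\cdots\cup p_n^{*}v_{U_n},$$
where $p_j\colon X^n\to X$ is the $j$-th projection, each $U_j$ is a subset of the chosen basis, and $v_{U_j}$ denotes the wedge product of its elements. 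Attach to each monomial the function $e\colon R_X\to\Z_{\ge 0}$ counting occurrences of each eigenvalue $\alpha$ across all $U_j$; then its Frobenius eigenvalue equals $\prod_\alpha\alpha^{e(\alpha)}$. The hypothesis that $k$ is sufficiently large makes $\Gamma(X,k)$ torsion-free, so $\prod_\alpha\alpha^{e(\alpha)}=q^i$ precisely when $e$ is a nonnegative admissible function of degree $i$. Combined with semisimplicity of Frobenius on $H^\bullet(X^n_{\bar k},\bar\Q_\ell)$, this identifies the $\bar\Q_\ell$-Tate subspace in codimension $i$ with the $\bar\Q_\ell$-span of those monomials $\omega_{(U_1,\dots,U_n)}$ whose associated $e$ is nonnegative admissible of degree $i$ (equivalently, of weight $2i$ by \eqref{degreeE}).

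Given such a monomial Tate class, apply Theorem~\ref{semigroupADM} to write $e=\sum_k c_k e_k$ as a nonnegative integer combination of fixed admissible functions of weight at most $H(g)$, or, counting with multiplicity, as an ordered sum $e=f_1+\cdots+f_r$ with $r=\sum_k c_k$. Next partition each $U_j$ into disjoint subsets $U_{j,1},\dots,U_{j,r}$ so that the sub-monomial
$$\omega_s := p_1^{*}v_{U_{1,s}}\cup\cdots\cup p_n^{*}v_{U_{n,s}}$$
has associated function $f_s$. Such a partition exists because, for each $\alpha\in R_X$, we may independently distribute the $e(\alpha)$ occurrences of $\alpha$ among the $r$ groups with the prescribed counts $f_s(\alpha)$, which sum to $e(\alpha)$. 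The (anti)commutativity of cup product yields $\omega_{(U_1,\dots,U_n)} = \pm\,\omega_1\cup\cdots\cup\omega_r$, and each $\omega_s$ is itself a Tate class of cohomological dimension $\wt(f_s)\le H(g)$. Faithfully flat descent from $\bar\Q_\ell$ to $\Q_\ell$---the $\Q_\ell$-Tate subspace and the subalgebra it generates in low degrees both extend scalars to their $\bar\Q_\ell$-counterparts---completes the argument.

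The principal conceptual obstacle is the identification of $\bar\Q_\ell$-Tate classes with the span of admissible monomials, which uses the torsion-freeness of $\Gamma(X,k)$ essentially via the correspondence $e\leftrightarrow\prod_\alpha\alpha^{e(\alpha)}$. Once this is set up, realizing the additive decomposition of Theorem~\ref{semigroupADM} at the level of cohomology is combinatorially straightforward, since each $U_j$ is a plain set and occurrences of different eigenvalues can be distributed independently.
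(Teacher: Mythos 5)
Your proof is correct, and though the core ideas coincide with the paper's, the packaging is different enough to merit comment. The paper translates Tate classes into Tate forms via the Hopf isomorphism \eqref{HopfIso}, abstracts the key step into Proposition~\ref{lastEff} (a linear-algebra lemma about images of wedge-product maps between Frobenius eigenspaces, itself proved by the same monomial partitioning you use), and then proceeds by \emph{induction} on the cohomological degree $2m$, peeling off one factor of weight $\le H(g)$ at a time via Corollary~\ref{contractionE}. You instead apply Theorem~\ref{semigroupADM} once to write $e=f_1+\cdots+f_r$ with \emph{all} parts of weight $\le H(g)$, then realize this in one shot by partitioning the monomial's index multiset into $r$ groups simultaneously; this cleanly eliminates the recursion. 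Your descent from $\bar\Q_\ell$ to $\Q_\ell$ is the same in substance as the paper's---eigenspaces of a $\Q_\ell$-linear operator and images of multiplication maps commute with scalar extension, which the paper verifies carefully in Remark~\ref{eigenKEwedge} and equality~\eqref{wedgeUWrKE}---though your one-sentence appeal to ``faithfully flat descent'' is terser than ideal and should at least note that the relevant subspaces are defined over $\Q_\ell$ and that base change commutes with images of bilinear maps. One small misstatement: the equivalence of $\prod_\alpha\alpha^{e(\alpha)}=q^i$ with ``$e$ is nonnegative admissible of degree $i$'' is just the definition of admissibility and does not invoke torsion-freeness of $\Gamma(X,k)$; that hypothesis is genuinely used inside Theorem~\ref{semigroupADM}, which you correctly treat as a black box, so nothing is lost.
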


The paper is organized as follows. In Section \ref{neat} we discuss basic useful results  about $R_X$ and related objects,
including the Newton polygons. In addition, we discuss roots of unity in $\Gamma(X,k)$ (Lemma \ref{rootsG})
and the structure and degree of $L_X$ (Lemma \ref{basicL}).
In Section \ref{MultRel} we  study multiplicative relations between Weil numbers (i.e., admissible functions) and their weights; in particular, we prove Lemma \ref{rankGamma} (see Lemma \ref{rank}).  In Sections \ref{RelationProof}   and \ref{GordanL} we prove Theorems  \ref{mainRelation}  and \ref{semigroupADM} respectively.
Section \ref{linAlgebra} contains certain constructions from multilinear algebra that we use
 in Section \ref{TateProof} in order to prove Theorems \ref{mainTate} and \ref{semigroupTate}.

As usual, $\ell$ and $l$ are primes different from $p$, and $\NN, \Z,\Z_{\ell},\Q,\RR, \C,\Q_{\ell}$ stand for the set of positive integers, the rings of integers and $\ell$-adic integers, the fields of rational,  real, complex, and $\ell$-adic  numbers respectively.  We write $\Z_{+}$ and $\RR_{+}$ for  the additive semigroups of {\bf nonnegative} integers and of {\bf nonnegative} real numbers respectively.
If $z$ is a complex number then we write $\bar{z}$ for its complex-conjugate. Similarly, if $\phi \colon E \hookrightarrow \C$ is a field embedding then we write $\bar{\phi}$ for the corresponding complex-conjugate field embedding
$$\bar{\phi} \colon E \hookrightarrow \C, \ x \mapsto \overline{\phi(x)}.$$
If $M$ is a positive integer and $v$ and $w$ are two vectors in $\RR^M$ then we write $v\cdot w$ for their scalar product.
If $A$ is a finite set then we  write $\#(A)$ for the number of its elements. We write $\rk(\Delta)$ for the rank of a finitely generated commutative group $\Delta$. 
%If $\Lambda$ is a ring then we write $\Lambda^{*}$ for the multiplicative group of its invertible elements.

{\bf Acknowledgements}. I am deeply grateful to Kiran Kedlaya for interesting stimulating questions and to the referee, whose comments helped to improve the exposition.

Part of this work was done during my stay at Centre \'Emile Borel (Institut Henri Poincar\'e, Paris) in June-July 2019, whose hospitality and support are gratefully acknowledged.

\section{Preliminaries}
\label{neat}
In this section we discuss basic properties of $L=L_X,  R_X, \Gamma(X,k)$.  Let us start with the formal definition of $\mathcal{P}_X(t)$.

Throughout this paper $k$ is a finite field of characteristic $p$ that consists of $q$ elements, $\bar{k}$ an algebraic closure of $k$ and $\Gal(k)=\Gal(\bar{k}/k)$ the absolute Galois group of $k$. It is well known that the profinite group $\Gal(k)$ is procyclic and the {\sl Frobenius automorphism}
$$\sigma_k \colon \bar{k} \to \bar{k}, \ x \mapsto x^q$$
is a topological generator of $\Gal(k)$. If $\ell \ne p$ is a prime then we write
$$\chi_{\ell} \colon \Gal(k) \to \Z_{\ell}^{*}$$
for the $\ell$-adic {\sl cyclotomic character} that defines the Galois action on all $\ell$-power roots of unity in $\bar{k}$. By definition,
$$\chi_{\ell}(\sigma_k)=q \in  \Z_{\ell}^{*}.$$

Let $X$ be an abelian variety of positive dimension over $k$. We write $\End(X)$ for the ring of its $k$-endomorphisms and $\End^0(X)$ for the corresponding (finite-dimensional semisimple) $\Q$-algebra $\End(X)\otimes\Q$. We write $\Fr_X=\Fr_{X,k}$ for the Frobenius endomorphism of $X$. We have
$$\Fr_X \in \End(X)\subset \End^0(X).$$
It is well known that
\begin{equation}
\label{sigmaFr}
\sigma_k(x)=\Fr_X(x) \ \forall x \in X(\bar{k}).
\end{equation}
By a theorem of Tate \cite[Sect. 3, Th. 2 on p, 140]{Tate1}, the $\Q$-subalgebra $\Q[\Fr_X]$ of $\End^0(X)$ generated by $\Fr_X$ coincides with the center of $\End^0(X)$. In particular, if $\End^0(X)$ is a field then $\End^0(X)=\Q[\Fr_X]$.

 If $\ell$ is a prime different from $p$ then we  write $T_{\ell}(X)$ for the $\Z_{\ell}$-Tate module of $X$ and $V_{\ell}(X)$ for the corresponding $\Q_{\ell}$-vector space
$$V_{\ell}(X)=T_{\ell}(X)\otimes_{\Z_{\ell}}\Q_{\ell}.$$
It is well known \cite[Sect. 18]{Mumford} that $T_{\ell}(X)$ is a free $\Z_{\ell}$-module of rank $2\dim(X)$ that may be viewed as a $\Z_{\ell}$-lattice in the $\Q_{\ell}$-vector space $V_{\ell}(X)$   of dimension $2\dim(X)$. The Galois action on $X(\bar{k})$ induces the continuous group homomorphism \cite{SerreAbelian,SerreRibet}
$$\rho_{\ell}=\rho_{\ell,X} \colon \Gal(K) \to \Aut_{\Z_{\ell}}(T_{\ell}(X))\subset \Aut_{\Q_{\ell}}(V_{\ell}(X)).$$
In addition, there is a canonical isomorphism of $\Gal(k)$-modules
$X[\ell] \cong T_{\ell}(X)/\ell$ where $X[\ell]$ is the kernel of multiplication by $\ell$ in $X(\bar{k})$.

 By functoriality, $\End(X)$ and $\Fr_X$ acts
  on ($T_{\ell}(X)$ and) $V_{\ell}(X)$; it is well known that the action of $\Fr_X$ coincides with the action of $\rho_{\ell}(\sigma_k)$. By a theorem of A. Weil \cite[Sect. 19 and Sect. 21]{Mumford}, $\Fr_X$ acts on $V_{\ell}(X)$ as a semisimple linear operator, its characteristic polynomial
$$\P_X(t)=\P_{X,k}(t)=\det (t \I -\Fr_X, V_{\ell}(X)) \in \Z_{\ell}[t]$$
lies in $\Z[t]$ and does not depend on a choice of $\ell$. In addition, all eigenvalues of $\Fr_X$ (which are algebraic integers) have archimedean absolute value equal to $q^{1/2}$,
and  if an eigenvalue of $\Fr_X$  is a square root of $q$ then its multiplicity is even (see \cite[p. 267]{ZarhinK3}).
This implies that the constant term of $\P_{X,k}(t)$ is $q^{\dim(X)}$.  In particular, $\Fr_X$ acts as an {\sl automorphism} of the free $\Z_{\ell}$-module $T_{\ell}(X)$.

  This means that if
$$L=L_X \subset \C$$
is the splitting field of $\P_X(t)$ and  $$R_X=R_{X,k} \subset L$$ is the set of roots of $P(t)$ then $L$ is a finite Galois extension of $\Q$ such that for every field embedding $L \hookrightarrow \C$ we have $\mid \alpha \mid =q^{1/2}$ for all $\alpha \in R_X$.  Let  $\Gal(L/\Q)$ be the Galois group of $L/\Q$.
 Clearly, $R_X$ is a $\Gal(L/\Q)$-invariant (finite) subset of $L^{*}$. It follows easily that if  $\alpha \in R_X$ then $q/\alpha \in R_X$. Indeed,
$q/\alpha$ is the {\sl complex-conjugate} $\bar{\alpha}$ of $\alpha$. We have
$$q^{-1}\alpha^2=\frac{\alpha}{q/\alpha}.$$

\begin{defn}
Let $\ell$ be a prime and $n$ a positive integer. We write $\mathbf{e}_{\ell}(n)$
for the largest order of elements of the general linear group $\GL(n,\F_{\ell})$.
We write $\exp_{\ell}(n)$ for the {\sl exponent} of  $\GL(n,\F_{\ell})$.
\end{defn}

Recall that $\Gamma(X,k)$ is the multiplicative subgroup of $L$ generated by $R_X$.

\begin{lem}
\label{rootsG}
If $\gamma \in \Gamma(X,k)$ is a root of unity then there is a positive integer
$m \le \max(2 \mathbf{e}_2(2g), \mathbf{e}_3(2g))$ such that $\gamma^m=1$. In addition,
$\gamma^{D(g)}=1$ where 
$$D(g):=\mathrm{LCM}(2\exp_{2}(2g),\exp_{3}(2g)), \ \text{ and }  \ m | D(g).$$
\end{lem}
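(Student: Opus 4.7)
The plan is to bound the order of $\gamma$ by reducing the Tate module of $X$ modulo small primes $\ell\in\{2,3\}$. First, fix such an $\ell$ with $\ell\ne p$ (at least one exists, since $p$ is a single prime) and choose a prime $\lambda$ of $L_X$ above $\ell$; this yields an embedding $L_X\hookrightarrow\overline{\Q_\ell}$ and a residue map to $\overline{\F}_{\ell}$. Each $\alpha\in R_X$ is an algebraic integer whose norm to $\Q$ is a power of $p\ne\ell$, so $\alpha$ is a $\lambda$-adic unit and its reduction $\bar\alpha\in\overline{\F}_{\ell}^{*}$ is nonzero. Consequently, for any $\gamma=\prod_{\alpha}\alpha^{n(\alpha)}\in\Gamma(X,k)$ the reduction $\bar\gamma=\prod_{\alpha}\bar\alpha^{n(\alpha)}$ is a well-defined element of $\overline{\F}_{\ell}^{*}$.

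The Frobenius $\Fr_X$ acts on the Tate module $T_\ell(X)\cong\Z_\ell^{2g}$ with characteristic polynomial $\mathcal{P}_X(t)$, so its reduction $\bar{\Fr}_X\in\GL(2g,\F_\ell)$ has eigenvalues precisely the $\bar\alpha$ (with multiplicity). Let $r_\ell$ denote the order of $\bar{\Fr}_X$; then $r_\ell\le\mathbf{e}_\ell(2g)$ and $r_\ell$ divides $\exp_\ell(2g)$ by definition. Since $\bar{\Fr}_X^{r_\ell}=I$, we have $\bar\alpha^{r_\ell}=1$ for every $\alpha$, hence $\bar\gamma^{r_\ell}=1$. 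Suppose now that $\gamma$ has exact order $m$ and write $m=\ell^{a}m'$ with $\gcd(m',\ell)=1$. Since the $\ell$-power part of $\gamma$ reduces to $1$ modulo $\lambda$, while prime-to-$\ell$ roots of unity inject into $\overline{\F}_{\ell}^{*}$, the reduction $\bar\gamma$ is a primitive $m'$-th root of unity. Therefore $m'$ divides $r_\ell$, whence $m'\le\mathbf{e}_\ell(2g)$ and $m'\mid\exp_\ell(2g)$; that is, the prime-to-$\ell$ part of $m$ is controlled by these invariants.

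Applying this with $\ell=2$ whenever $p\ne2$ and with $\ell=3$ whenever $p\ne3$—at least one always applies, both when $p\notin\{2,3\}$—and combining the resulting constraints prime by prime (the $2$-part of $m$ is bounded by the $2$-part of $\exp_3(2g)$, the $3$-part by the $3$-part of $\exp_2(2g)$, and each remaining $\ell$-part by the corresponding valuation of either $\exp_2(2g)$ or $\exp_3(2g)$) yields the divisibility $m\mid D(g)=\mathrm{LCM}(2\exp_2(2g),\exp_3(2g))$ together with the size bound $m\le\max(2\mathbf{e}_2(2g),\mathbf{e}_3(2g))$. The main obstacle in this strategy is a twofold subtlety at the prime $2$: first, when $p=2$ the prime $\ell=2$ is unavailable and one must extract the missing information from $\ell=3$ alone, supplemented by the degree bound on $L_X$; second, the factor $2$ attached to $\exp_2(2g)$ in $D(g)$ reflects the classical Minkowski-type phenomenon that the kernel of $\GL(n,\Z_2)\to\GL(n,\F_2)$ contains nontrivial $2$-torsion (unlike the odd-$\ell$ case, where the analogous kernel is torsion-free), so one must refine the mod-$2$ reduction to mod $4$ to capture the full $2$-adic order of $\gamma$, and the extra factor of $2$ is exactly the price paid.
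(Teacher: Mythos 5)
Your reduction-mod-$\lambda$ argument controls only the prime-to-$\ell$ part of the order $m$ of $\gamma$: as you yourself note, $\ell$-power roots of unity die under reduction, so nothing in your second paragraph bounds $\ell^{a}$. You then propose to recover the missing $\ell$-part from the \emph{other} small prime, but this fails exactly where you flag ``the main obstacle'': when $p=2$ only $\ell=3$ is available and the $3$-part of $m$ is left completely uncontrolled, and when $p=3$ only $\ell=2$ is available and the $2$-part of $m$ is uncontrolled. The fixes you gesture at (a ``degree bound on $L_X$'', a ``refinement of mod $2$ to mod $4$'') are named but never carried out, so the proof is incomplete precisely where the real work lies. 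Moreover, even when $p\notin\{2,3\}$ and both reductions are available, the constraints you obtain (odd part of $m$ divides $\exp_2(2g)$, prime-to-$3$ part divides $\exp_3(2g)$) combine at best to a product-type bound on $m$; they do not yield the stated bound $m\le\max(2\mathbf{e}_2(2g),\mathbf{e}_3(2g))$, which requires controlling \emph{all} of $m$ by data at a single prime.

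The paper closes this gap with one input you are missing: a variant of Minkowski's Lemma \cite[Lemma 2.4]{SZcomp}. One fixes a single $\ell\ne p$ (namely $\ell=2$ if $p\ne2$, and $\ell=3$ if $p=2$), lets $r$ be the order of $\Fr_X\bmod\ell$ in $\GL(2g,\F_{\ell})$, so that $\Fr_X^{r}\in \mathrm{Id}+\ell\End_{\Z_{\ell}}(T_{\ell}(X))$; the Minkowski-type lemma then asserts that any root of unity in the multiplicative group generated by the eigenvalues of such an automorphism equals $1$ when $\ell>2$ and has square $1$ when $\ell=2$. Applied to $\delta=\gamma^{r}$, this kills the $\ell$-part of the order as well, giving $\gamma^{r}=1$ (resp. $\gamma^{2r}=1$) outright, whence both $m\le\mathbf{e}_3(2g)$ and $m\mid\exp_3(2g)$ when $p=2$, and $m\le 2\mathbf{e}_2(2g)$ and $m\mid 2\exp_2(2g)$ otherwise --- in one stroke, from a single prime. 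Without this lemma (or an equivalent integral $\ell$-adic argument replacing your purely residual one), your proposal does not close.
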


\begin{proof}
 In what follows we choose a prime $\ell \ne p$ and view $\Fr_X$ as the automorphism of   free $\Z_{\ell}$-module $T_{\ell}(X)$ of rank $2g$. Then
 $\Fr_X$ induces the automorphism $\Fr_X \bmod \ell$ of the $2g$-dimensional $\F_{\ell}$-vector space 
 $$T_{\ell}(X)/\ell=X[\ell].$$
 Let $r$ be the order of 
 $$\Fr_X \bmod \ell \in \Aut_{\F_{\ell}}(X[\ell]) \cong \GL(2g, \F_{\ell}).$$ Clearly,
 $$r \le \mathbf{e}_{\ell}(2g) \  \text{ and } r \ | \exp_{\ell}(2g).$$
In addition,
$$\Fr_X^r \in \mathrm{Id}+\ell \End_{\Z_{\ell}}(T_{\ell}(X)).$$
Let $\Delta$ be the multiplicative group generated by all the eigenvalues of $\Fr_X^r$.
Clearly, $\delta=\gamma^r\in \Delta$.
Applying a variant of Minkowski's Lemma   \cite[Lemma 2.4]{SZcomp},
we obtain that $\delta=1$ if $\ell>2$ and $\delta^2=1$ if $\ell=2$.
This implies that $\gamma^r=1$ if $\ell>2$ and $\gamma^{2r}=1$ if $\ell=2$.
Now let us put $\ell=2$ if $p\ne 2$ and $\ell=3$ if $p=2$.
The rest is clear.

\end{proof}

Let $\Oc_L$ be the ring of integers in $L$.   Clearly, $R_X \subset \Oc_L$.
By a classical  theorem of A. Weil (Riemann's hypothesis) \cite{Mumford}, if $j \colon L_X=L \hookrightarrow \C$ is a field  embedding then $j(\alpha)\overline{j(\alpha)}=q$. 
 This implies that if $\B$ is a maximal ideal in $\Oc_L$ such that $\fchar(\Oc_L/\B) \ne p$  then all elements of $R_X$ are $\B$-adic units.
  The $p$-adic behaviour of $R_X$ is described in terms of the set $\Sl_X$ of {\sl slopes of the Newton polygon} of $X$ \cite{OortG} (see also \cite[Sect. 4]{ZarhinJPAA}). Recall that
   $\Sl_X$ is a finite nonempty set of rational numbers that enjoys the following properties.
  
  \begin{itemize}
  \item[(i)]  $0 \le c \le 1$ for all $c\in \Sl_X$.
   \item[(ii)] $c\in \Sl_X$ if and only if $1-c\in \Sl_X$.
    \item[(iii)]  If $c\in \Sl_X$ then either $c=1/2$ or there is a positive integer $h \le g=\dim(X)$ such that $c\in \frac{1}{h}\Z$. 
    \item[(iv)]  Let $\mathfrak{P}$ 
be any maximal ideal in $\Oc_L$ such that $\fchar(\Oc_L/\mathfrak{P})= p$ and let
$$\ord_{\PP} \colon L^{*} \to \Q$$
be the discrete valuation map attached to $\PP$ that is normalized by the condition
$$\ord_{\PP}(q)=1.$$
Then
$$\ord_{\PP}(R_X)=\Sl_X.$$
\item[(v)]  If $\alpha\in R_X$ then 
$$\ord_{\PP}(q/\alpha)=1-\ord_{\PP}(\alpha).$$
\item[(vi)] Let $\mu_{L}$ the multiplicative group of all roots of
unity in $L$. Then its image $\ord_{\PP}(\mu_{L})=\{0\}$.
  \end{itemize}
  
  Properties (i)-(vi) imply readily the following assertion.
  
  \begin{lem}
  \label{SLP}
  Let $g$ be a positive integer.
  Let us consider the  set $\mathrm{Slp}(g)$ of all rational numbers $c$  that enjoy the following properties.
  \begin{enumerate}
  \item
  $0 \le c \le 1$.
  \item
  Either $c=1/2$ or there  exists a positive integer $h \le g$ such that $c\in \frac{1}{h}\Z$. 
  \end{enumerate}
  Then $\mathrm{Slp}(g)$ is a {\sl finite nonempty} set that  enjoys the following property.
  
  {\sl If $X$ is a $g$-dimensional abelian variety over a finite field then $\Sl_X$ lies in} $\mathrm{Slp}(g)$.
  \end{lem}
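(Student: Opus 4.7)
The plan is to observe that Lemma \ref{SLP} is essentially a bookkeeping consequence of properties (i)--(iii) of $\Sl_X$ listed just before it, so the proof will be more-or-less immediate once the counting is made explicit.

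First I would verify finiteness of $\mathrm{Slp}(g)$. For each positive integer $h\le g$, the set
\[
\left\{c\in\tfrac{1}{h}\Z : 0\le c\le 1\right\}=\left\{0,\tfrac{1}{h},\tfrac{2}{h},\dots,\tfrac{h-1}{h},1\right\}
\]
is finite (with $h+1$ elements). Hence
\[
\mathrm{Slp}(g)\subseteq \{1/2\}\cup \bigcup_{h=1}^{g}\left\{c\in\tfrac{1}{h}\Z : 0\le c\le 1\right\}
\]
is a finite union of finite sets. Nonemptiness is trivial: $0$ and $1$ lie in $\mathrm{Slp}(g)$ via $h=1$, and in fact $1/2\in\mathrm{Slp}(g)$ directly.

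Next I would check the containment $\Sl_X \subseteq \mathrm{Slp}(g)$ for any $g$-dimensional abelian variety $X$ over a finite field. Let $c\in \Sl_X$. Property (i) gives $0\le c\le 1$. Property (iii) says that either $c=1/2$ or there is a positive integer $h\le g$ with $c\in \tfrac{1}{h}\Z$. Both conditions in the definition of $\mathrm{Slp}(g)$ are thus satisfied, so $c\in \mathrm{Slp}(g)$.

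There is really no obstacle here: the lemma is a direct repackaging of conditions (i) and (iii) into a statement that depends only on $g$, not on $X$. Property (ii) (the symmetry $c\mapsto 1-c$) is not needed for this particular statement, although it is of course preserved by the definition of $\mathrm{Slp}(g)$ since that definition is itself symmetric in $c$ and $1-c$.
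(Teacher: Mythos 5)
Your proof is correct and amounts to exactly the elementary bookkeeping the paper has in mind (the paper merely says the lemma follows ``readily'' from properties (i)--(vi) of $\Sl_X$ and gives no further argument). Your explicit verification of finiteness via the finite union $\{1/2\}\cup\bigcup_{h=1}^g\{c\in\tfrac{1}{h}\Z:0\le c\le 1\}$ and the containment check using properties (i) and (iii) is precisely what ``readily'' unpacks to.
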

  
  \begin{rems}
  \label{ordP}
  Let $S(p)$ be the set of all maximal ideals in $\Oc_L$ such that $\fchar(\Oc_L/\mathfrak{P})= p$. Since $L/\Q$ is Galois,  $\#(S(p))$ divides $[L:\Q]$;
  in particular, $\#(S(p))$ divides $g! 2^g$ (see Lemma \ref{basicL}(ii) below). Let us define a group homomorphism
  $$w_X \colon \Gamma(X,K) \to \Q^{S(p)},   \gamma \mapsto \{\ord_{\PP}(\gamma)\}_{\PP\in S(p)}.$$
  \begin{itemize}
  \item[(a)]
  Clearly,
  $w_X(q)$ is the vector ${\bf 1}\in \Q^{S(p)}$, all whose coordinates equal $1$, hence
   $$w_X\left(\frac{q}{\alpha}\right)={\bf 1}-w_X(\alpha) \ \forall \alpha \in R_X.$$
   \item[(b)]
  By Property (iv) and Lemma \ref{SLP}, 
  $$w_X(R_X) \subset \Sl_X^{S(p)} \subset \mathrm{Slp}(g)^{S(p)} \subset \Q^{S(p)}.$$
  \item[(c)]
  It follows from Property (v) that a vector $\tilde{c} \in  \Q^{S(p)}$ lies in $w_X(R_X)$ if and only if
  ${\bf 1}-\tilde{c} \in w_X(R_X)$.
   \item[(d)]
   In light of Property (vi), $w_X(\gamma)=0$ if $\gamma$ is a root of unity. The converse is also true:
  it is proven in \cite[Prop. 2.1 on p. 249]{ZarhinTokyo} (see also \cite[Prop. 3.1.5]{ZarhinInv79}) that $\ker(w_X)$ {\sl consists of roots of unity}.
   \item[(e)] 
  It follows readily from (d)  that:
  
  \begin{enumerate}
  \item[(1)] none of elements in $R_X$ lies in $\ker(w_X)$;
  \item[(2)] 
  if $k$ is {\sl not small} w.r.t $X$ and $\alpha_1, \alpha_2$ are distinct elements of $R_X$ then
  $$w_X(\alpha_1) \ne w_X(\alpha_2).$$
  \end{enumerate}
  \end{itemize}
  \end{rems}
  
  \begin{lem}
\label{basicL}
\begin{itemize}
\item[]
\item[(i)]
The field  $L_X$ is either $\Q$ or $\Q(\sqrt{p})$ or  a CM field.
\item[(ii)]
The field $L_X$ is a finite Galois extension of $\Q$ and its degree $[L_X:\Q]$ divides $g! 2^g$.
\item[(iii)] $\#(S(p))$ divides $g! 2^g$.
\end{itemize}
\end{lem}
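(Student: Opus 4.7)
The plan is to address the three parts in order, treating (ii) as the substantive piece and (iii) as a routine consequence.

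For part (i), the starting point is that the map $\alpha \mapsto q/\alpha$ on $R_X$ extends to an automorphism $c \in \Gal(L_X/\Q)$. Fixing any embedding $L_X \hookrightarrow \C$, Weil's theorem gives $\overline{\alpha} = q/\alpha$ for every $\alpha \in R_X$, so the restriction of complex conjugation realizes such a map, and since $R_X$ generates $L_X$ over $\Q$ this determines $c$ uniquely. I would then verify that $c$ lies in the center of $\Gal(L_X/\Q)$ by a direct check on generators: for every $\sigma \in \Gal(L_X/\Q)$ and $\alpha \in R_X$, $\sigma c \sigma^{-1}(\alpha) = \sigma(q/\sigma^{-1}(\alpha)) = q/\alpha = c(\alpha)$. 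Centrality says that $c$ is the restriction of complex conjugation for \emph{every} embedding of $L_X$ into $\C$. If $c = \IT$, then $\alpha^2 = q$ for all $\alpha \in R_X$, so $L_X \subseteq \Q(\sqrt{q})$, which is $\Q$ or $\Q(\sqrt{p})$ according to the parity of the exponent of $p$ in $q$. If $c \ne \IT$, then $c$ is a nontrivial involution representing complex conjugation under every embedding, so $L_X$ is totally imaginary while its fixed subfield is totally real, exhibiting $L_X$ as a CM field.

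For part (ii), the Galois property of $L_X/\Q$ is automatic since $L_X$ is a splitting field. To sharpen the trivial bound $(2g)!$ to $g! 2^g$, I would use the functional equation $t^{2g}\mathcal{P}_X(q/t) = q^g \mathcal{P}_X(t)$, which follows from the pairing $\alpha \leftrightarrow q/\alpha$ and the identity $\prod_{\alpha \in R_X}\alpha^{\mult_X(\alpha)} = q^g$. Pairing up the $2g$ roots (with multiplicity) as $\{\alpha_j, q/\alpha_j\}_{j=1}^g$ and setting $c_j := \alpha_j + q/\alpha_j$, a direct computation yields
\[
\mathcal{P}_X(t) = \prod_{j=1}^g (t^2 - c_j t + q) = t^g f(t + q/t), \qquad f(s) := \prod_{j=1}^g (s - c_j).
\]
One checks that $f(s) \in \Z[s]$ has degree $g$: the Laurent polynomial $\mathcal{P}_X(t)/t^g$ is invariant under $t \mapsto q/t$, so the substitution $s = t + q/t$ expresses it as a polynomial in $s$ whose coefficients are explicit $\Z$-linear combinations of those of $\mathcal{P}_X$ and of powers of $q$. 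Letting $M \subseteq L_X$ denote the splitting field of $f$ over $\Q$, one has $[M:\Q] \mid g!$. Each $\alpha_j$ satisfies $\alpha_j^2 - c_j \alpha_j + q = 0$ over $M$, hence $\alpha_j \in M(\sqrt{c_j^2 - 4q})$, so $L_X$ is obtained from $M$ by adjoining at most $g$ square roots, giving $[L_X : M] \mid 2^g$ and $[L_X : \Q] \mid g! 2^g$.

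For part (iii), since $L_X/\Q$ is Galois, all primes in $S(p)$ share a common ramification index $e$ and residue degree $f$, and $ef \cdot \#(S(p)) = [L_X : \Q]$; hence $\#(S(p))$ divides $[L_X:\Q]$, which by (ii) divides $g! 2^g$. The only mildly delicate step in the whole argument is confirming $f(s) \in \Z[s]$ (and not merely $\Q[s]$) in (ii); this is most cleanly handled via the self-reciprocal form of $\mathcal{P}_X$ and the explicit substitution $s = t + q/t$, which also disposes of the degenerate case $\alpha_j = q/\alpha_j$ (there $c_j^2 - 4q = 0$ so no quadratic extension is needed, and the bound is not weakened).
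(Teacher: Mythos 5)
Your proof is correct but takes a genuinely different route from the paper's. The paper first reduces to the case of a simple abelian variety via the isogeny decomposition $X \sim \prod_i X_i$ (handling the compositum $L_X$ of the $L_{X_i}$ through the embedding $\Gal(L_X/\Q) \hookrightarrow \prod_i \Gal(L_{X_i}/\Q)$), and for simple $X$ it invokes Tate's Honda--Tate description of $E_X=\Q[t]/\mathcal{P}_{\mathrm{irr}}(t)$ as $\Q$, $\Q(\sqrt{p})$, or a CM field, then bounds $[L_X:\Q]$ by realizing $L_X$ as $L_X^{+}$ with square roots of the Galois conjugates of $-\delta$ adjoined. Your argument bypasses both reductions. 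For (i) you observe that $\alpha\mapsto q/\alpha$ is induced by a central involution $c\in\Gal(L_X/\Q)$, hence $c$ is the restriction of complex conjugation under every embedding, which is exactly the criterion for $L_X$ to be totally real or CM; this avoids any appeal to the structure of $\End^0(X)$. For (ii) you work entirely with the self-reciprocal polynomial $\mathcal{P}_X$, factoring $\mathcal{P}_X(t)=t^g f(t+q/t)$ with $f\in\Z[s]$ monic of degree $g$ (the integrality follows from the recurrence $T_{i+1}(s)=sT_i(s)-qT_{i-1}(s)$ for the polynomials expressing $t^i+(q/t)^i$ in $s=t+q/t$, together with the relation between $\mathcal{P}_X$'s coefficients from the functional equation), so $[M:\Q]\mid g!$ and $[L_X:M]\mid 2^g$ since $L_X$ is obtained from $M$ by adjoining the $g$ square roots $\sqrt{c_j^2-4q}$. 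This is more elementary and self-contained, uses only \eqref{multR} and Weil's theorem, and — unlike the paper's argument — establishes (ii) without routing through (i). Both approaches give the same bound $g!\,2^g$, and (iii) is identical.
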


\begin{proof}
Let us prove (i,ii).
By definition of the splitting field,  $L_X/\Q$ is Galois.

Suppose that $X$ is simple.  According to \cite{Tate1,Tate2},
$\mathcal{P}_X(t)$ is a power $\mathcal{P}_{\mathrm{irr}}(t)^{a}$ of a $\Q$-irreducible monic
polynomial $\mathcal{P}_{\mathrm{irr}}(t)$ where $a$ is a positive integer dividing $2g$ and
$$\deg(\mathcal{P}_{\mathrm{irr}})=\frac{2g}{a}.$$
Clearly, $L_X$ is the normal closure of the degree $2g/a$ number field $E_X:=\Q[t]/\mathcal{P}_{\mathrm{irr}}(t)$.
According to \cite[Exemples]{Tate2}, $E_X$ is either  $\Q$ or $\Q(\sqrt{p})$ or  a CM field.

In the first two cases $L_X=\Q$ or $\Q(\sqrt{p})$; in particular, it is a totally real number field, whose
degree divides $2g=2\dim(X)$. 

In the third case let $E_X^{+}$ be the maximal totally real  subfield of $E_X$;
$[E_X^{+}:\Q]=g/a$ and $E_X$ is a purely imaginary quadratic extension $E_X^{+}(\sqrt{-\delta})$ of $E_X^{+}$.
Here $\delta$ is a totally positive element of $E_X^{+}$. 
Let $L_X^{+}$ be the normal closure of $E_X^{+}$. Since $E_X^{+}$ is totally real,  $L_X^{+}$ is  totally real as well, and its degree $[L_X^{+}:\Q]$ divides 
$[E_X^{+}:\Q]!=(g/a)!$. Since $-\delta \in E_X^{+}\subset L_X^{+}$, its Galois orbit in $L_X^{+}$ consists at most
of  $[E_X^{+}:\Q]=g/a$ elements. This implies that  $[L_X: L_X^{+}]$ divides $2^{g/a}$, since $L_X$ is obtained from 
 $L_X^{+}$ by adjoining square roots of all the (totally negative) Galois conjugates $-\delta$. This implies that
 $L_X$ is a CM field and $[L_X:\Q]$ divides $(g/a)!\cdot 2^{g/a}$, which in turn, divides $g! \cdot 2^g$. 
 
 Now let us consider the general case when $X$ is isogenous to a product $\prod_{i=1}^m X_i$
 of $m$ nonzero  simple abelian varieties $X_i$. 
 It is well known that if we put 
 $$g_i:=\dim(X_i),   \ \text{ and }  L_i:=L_{X_i}$$
  then
 $$g=\dim(X)=\sum_{i=1}^m g_i, \  \mathcal{P}_{X}(t)=\prod_{i=1}^m \mathcal{P}_{X_i}(t).$$
 Let $\bar{\Q}$ be an algebraic closure of $\Q$. We may and will view all $L_i$ as subfields of $\bar{\Q}$. Then
 $L_X$ is the {\bf compositum} of $m$ number fields $L_{X_i}=L_i$ in $\bar{\Q}$. Applying (i,ii) to simple $X_i$'s,
 we obtain that $L_X$ is either $\Q$ or $\Q(\sqrt{p})$ or a CM field, which proves (i).
 
 In order to prove (ii), recall that all $L_{i}/\Q$  and $L_X/\Q$ are finite Galois extensions.
 Let $\Gal(L_{i}/\Q)$ and $\Gal(L_{X}/\Q)$ be the corresponding (finite) Galois groups.
 Clearly, each $L_{i}$ is a $\Gal(L_{X}/\Q)$-invariant subfield of $L_X$, and the corresponding restriction map
 $$\Gal(L_{X}/\Q) \to \Gal(L_{i}/\Q), \ s \mapsto s_i$$
 is a surjective group homomorphism. On the other hand, since all the $L_i$'s generate $L_X$ as a field, the product-map
 $$\Gal(L_X/\Q) \to \prod_{i=1}^m \Gal(L_{i}/\Q), \  s \mapsto \{s_i\}_{i=1}^m$$
 is a {\sl group embedding}. By Lagrange's theorem, $\#(\Gal(L_X/\Q) )$ divides  $\prod_{i=1}^m \#(\Gal(L_{i}/\Q))$. In other words,
  $[L_X:\Q]$ divides 
 $\prod_{i=1}^m [L_i:\Q]$, which, in turn, divides 
 $$\prod_{i=1}^m g_i! 2^{g_i}=2^g \prod_{i=1}^m g_i!.$$
 Since $\sum_{i=1}^m g_i=g$, the product $\prod_{i=1}^m g_i!$ divides $g!$. This implies that
 $[L_X:\Q]$ divides $2^g g!$, which ends the proof of (ii).
 
Let us prove (iii). Since $L_X/\Q$ is Galois,  $\#(S(p))$ divides $[L_X:\Q]$. Now (iii) follows readily from (ii).
\end{proof}

  \section{Multiplicative relations between Weil numbers}
  \label{MultRel}
  
  This section contains auxiliary results that will be used in Section \ref{RelationProof}  in the proof of Theorem \ref{mainRelation}. 
  
  \begin{sect}
  \label{involution}
  {\bf The involution}.
  Recall that there is an involution map
  \begin{equation}
  \label{iotaR}
  \iota: R_X \to R_X, \ \alpha \mapsto \frac{q}{\alpha}=\bar{\alpha}.
  \end{equation}
  Let $R_X^{\iota}$ be the subset of fixed points of $\iota$. Its elements
   (if there are any) are square roots of $q$; hence, 
   \begin{equation}
   \label{iota2}
   \#(R_X^{\iota})\le 2.
   \end{equation}
   In addition, if $k$ is {\sl not} small with respect to $X$ then at most {\sl one square root} of $q$ lies in $R_X$, hence,
   \begin{equation}
   \label{iota1}
   \#(R_X^{\iota})\le 1.
   \end{equation}
   
   \begin{rem}
   \label{ssF}
   Suppose that $k$ is {\sl not} small w.r.t $X$.
   If $\beta$ is an element of $R_X$ such that $\beta^2/q$ is a root of unity then
   $q/\beta \in R_X$ and the ratio
   $$\frac{\beta}{q/\beta}=\frac{q}{\beta^2}$$
   is a root of unity. This implies that $\beta=q/\beta$, i.e., $\beta \in R_X^{\iota}$.
   \end{rem}
   
   Let us consider the free abelian group $\Z^{R_X}$ of functions $e: R_X \to \Z$. The involution $\iota$
   induces an automorphism (also an involution)
   $$\iota^{*} \colon \Z^{R_X} \to \Z^{R_X}, \ \iota^{*}e(\alpha):=e(\iota \alpha)=e(q/\alpha).$$
   Let us consider the group homomorphism
   $$\Pi \colon \Z^{R_X} \to \Gamma(X,k)\subset L_X^{*},  \ e \mapsto \prod_{\alpha\in R_X}\alpha^{e(\alpha)}.$$
   We have
   $$\Pi(\iota^{*}e)=\overline{\Pi(e)}.$$

   \begin{rems}
   \label{supersing}
   \begin{itemize}
   \item[]
   \item[(i)]
   If $\iota^{*}e=e$ then $\Pi(e)=\overline{\Pi(e)}$ is  (totally) real; it follows from Weil's theorem that
   $\Pi(e)^2$ is an integral power of $q$. In particular, if $\sum_{\alpha\in R_X}e(\alpha)$ is even then it follows
   from Weil's theorem that $\Pi(e)$ is $\pm$ integral power of $q$.
   \item[(ii)]
   If $f$ is a function $R_X \to \Z$ then the function $e:=f+\iota^{*}f$ is obviously trivial.
   
   Conversely, one may easily check that a function $e \colon R_X \to \Z$ is trivial if and only if there  exists $f \colon R_X \to \Z$ such that
   $e=f+\iota^{*}f$.
   \item[(iii)]
   Clearly, $e$ is admissible if and only if $\Pi(e)$ lies in the cyclic multiplicative subgroup $q^{\Z}$ generated by $q$.
   \end{itemize}
   \end{rems}
   \end{sect}
   
   \begin{sect}
   \label{Ror}
   {\bf Ranks and Orbits}.
   The complement $R_X\setminus R_X^{\iota}$ splits (if it is not empty)  into a disjoint union of 
   $2$-element orbits of $\iota$ say $\{\alpha,q/\alpha\}$. Let $r_X$ be the number of such orbits, which is a nonnegative integer that vanishes
   if and only if $R_X = R_X^{\iota}$. We have
   \begin{equation}
   \label{rXdef}
    \#(R_X\setminus R_X^{\iota})=2 r_X; \  r_X \le \frac{\#(R_X)}{2} \le \frac{2g}{2}=g.
    \end{equation} 
    If $r_X\ge 1$ (i.e., $R_X \ne R_X^{\iota}$)  then we  have $r_X$ $2$-elements $\iota$-orbits $O_1, \dots, O_{r_X}$ in $R_X\setminus R_X^{\iota}$.
    By choosing arbitrarily an element $\alpha_i \in O_i$ for all $i=1, \dots, r_X$, we get
    \begin{equation}
    \label{OR}
    O_i=\{\alpha_i,q/\alpha_i\} \ \forall i=1, \dots, r_X; \  R_X \setminus R_X^{\iota}=\{\alpha_1, q/\alpha_1, \dots, \alpha_{r_X},q/\alpha_{r_X}\}.
    \end{equation}
   
   Recall \cite{ZarhinEssen}, that $\Gamma(X,k)$ always contains $q$.  Since $\beta^2=q$ for all $\beta \in R_X^{\iota}$,
   the subgroup $\Gamma_1(X,k)$ of $\Gamma(X,k)$ generated by $q$ and all elements  of $R_X\setminus R_X^{\iota}$  has finite index
   in  $\Gamma(X,k)$. In particular,
   \begin{equation}
   \label{rkGamma1}
   \rk(\Gamma_1(X,k))= \rk(\Gamma(X,k)).
   \end{equation}
   
   Clearly, if $r_X=0$ then $\Gamma_1(X,k)=q^\Z$ has rank $1$, hence $\rk(\Gamma(X,k))=1$.
   It follows from \eqref{OR} that if $r_X \ge 1$ then $\Gamma_1(X,k)$ is generated by $\{\alpha_1, \dots, \alpha_{r_X};q\}.$
   In particular,
   \begin{equation}
   \label{rankGamma1rX}
    \rk(\Gamma_1(X,k)) \le r_X+1.
   \end{equation}
   
   \begin{rem}
   \label{rankSmall}
   Suppose that $k$ is {\sl not small} w.r.t $X$. Then $\#(R_X^{\iota})=0$ or $1$ and therefore
   $\#(R_X)=2 r_X$ or $2 r_X+1$ respectively. In both cases
   \begin{equation}
   \label{rXR}
   r_X=\lfloor\frac{\#(R_X)}{2}\rfloor.
   \end{equation}
   Combining \eqref{rXR} with \eqref{rankGamma1rX} and \eqref{rkGamma1}, we obtain that
   \begin{equation}
   \label{smallrXr}
   \rk(\Gamma(X,k))= \rk(\Gamma_1(X,k))\le r_X+1=\lfloor\frac{\#(R_X)}{2}\rfloor+1.
   \end{equation}

   \end{rem}
   \end{sect}
   
   \begin{sect}{\bf Nontrivial and reduced admissible functions}.
   \label{nontrivial}
   The existence of a nontrivial admissible function implies certain restrictions on $R_X$.
 
  \begin{lem}
 \label{multiply}
 Suppose that there exists a nontrivial admissible function $e \colon R_X \to \Z$ of degree, say, $d$.
 
 Then the following conditions hold.
 
 \begin{itemize}
 \item[(i)] $R_X \ne R_X^{\iota}$, i.e.,
 $R_X \setminus R_X^{\iota}$ is a nonempty subset of $R_X$.

  \item[(ii)]
  For each nonzero integer $m$ the function
 $$m \cdot e \colon R_X \to \Z, \  \alpha \mapsto m\cdot e(\alpha)$$
 is also nontrivial admissible.
   \item[(iii)]
   Let us consider the function
   $e_0 \colon R_X \to \Z$ that vanishes identically on $R_X^{\iota}$ (if this subset is nonempty)
   and coincides with $e$ on $R_X \setminus R_X^{\iota}$. Then $e_0$ is nontrivial.
   In addition,
    for each nonzero even integer $m$
   the function 
   $$m \cdot e_0: R_X \to \Z, \ \alpha \mapsto m\cdot e_0(\alpha)$$
   is nontrivial admissible, and its weight

   \begin{equation}
   \label{wte0}
   \wt(m\cdot e_0)=|m|\wt(e_0) \le |m|\wt(e).
   \end{equation}

 \end{itemize}
 \end{lem}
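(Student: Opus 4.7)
The plan is to establish first the key observation that any admissible function $e \colon R_X \to \Z$ satisfying $\iota^{*}e = e$ (equivalently, $e(\alpha) = e(q/\alpha)$ for all $\alpha \in R_X$) must already be trivial. To see this, decompose $R_X = R_X^{\iota} \sqcup (R_X \setminus R_X^{\iota})$ and pair up the 2-orbits $\{\alpha, q/\alpha\}$ on the complement; by $\iota$-invariance each such pair contributes $\alpha^{e(\alpha)}(q/\alpha)^{e(\alpha)} = q^{e(\alpha)}$ to $\Pi(e)$, so the total contribution from $R_X \setminus R_X^{\iota}$ is already an integer power of $q$. Since $\Pi(e) = q^d$ by admissibility, the residual factor $\prod_{\beta \in R_X^{\iota}} \beta^{e(\beta)}$ must itself be an integer power of $q$. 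Because $R_X^{\iota} \subseteq \{\sqrt{q}, -\sqrt{q}\}$ has at most two elements by \eqref{iota2}, a short case check on $\#(R_X^{\iota}) \in \{0,1,2\}$ — tracking the sign coming from $(-\sqrt{q})^{e(-\sqrt{q})}$ together with the parity needed to make each $\beta^{e(\beta)}$ a rational power of $q$ — forces $e(\beta) \in 2\Z$ for every $\beta \in R_X^{\iota}$. Both clauses of \eqref{relationT} then hold, so $e$ is trivial.

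Part (i) is now the contrapositive: since $e$ is nontrivial admissible, $\iota^{*}e \ne e$, so some $\alpha_0 \in R_X$ satisfies $e(\alpha_0) \ne e(q/\alpha_0)$; such an $\alpha_0$ cannot lie in $R_X^{\iota}$ (where $q/\alpha_0 = \alpha_0$), hence $R_X \setminus R_X^{\iota}$ is nonempty. For part (ii), admissibility of $m \cdot e$ is immediate from $\Pi(m \cdot e) = \Pi(e)^m = q^{md}$, and nontriviality follows from $m\,e(\alpha_0) \ne m\,e(q/\alpha_0)$ for any $m \ne 0$.

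For part (iii), $e_0$ is nontrivial because, with the same $\alpha_0 \in R_X \setminus R_X^{\iota}$ as above, $e_0(\alpha_0) = e(\alpha_0) \ne e(q/\alpha_0) = e_0(q/\alpha_0)$. To show $m \cdot e_0$ is admissible for nonzero even $m$, factor $\Pi(m \cdot e) = \Pi(m \cdot e_0) \cdot \prod_{\beta \in R_X^{\iota}} \beta^{m\,e(\beta)}$; evenness of $m$ makes $m\,e(\beta)$ even, which removes any sign or square-root issue so that $\beta^{m\,e(\beta)} = q^{m\,e(\beta)/2}$, and therefore $\Pi(m \cdot e_0) = q^{md - m \sum_{\beta} e(\beta)/2}$ is an integer power of $q$. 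Nontriviality of $m \cdot e_0$ follows as in (ii), and the weight identity $\wt(m \cdot e_0) = |m|\wt(e_0) \le |m|\wt(e)$ is immediate since $e_0$ agrees with $e$ off $R_X^{\iota}$ and vanishes on $R_X^{\iota}$. The only mildly delicate step is the case check used in the central observation, but with $\#(R_X^{\iota}) \le 2$ this reduces to a small number of sign and parity verifications rather than a genuine obstacle.
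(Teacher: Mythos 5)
Your proof is correct, and it is organized in a genuinely cleaner way than the paper's. You isolate one reusable lemma — an admissible $e$ with $\iota^{*}e = e$ is automatically trivial, because the $\iota$-paired factors in $\Pi(e)$ over $R_X\setminus R_X^{\iota}$ collapse to an integer power of $q$, forcing $\prod_{\beta\in R_X^{\iota}}\beta^{e(\beta)}$ to be an integer power of $q$, and a short sign-and-parity check with $\#(R_X^{\iota})\le 2$ then rules out odd exponents (the sign $(-1)^{e(-\beta)}$ versus the positivity of $q^{d-n}$, and the fact that $\pm\sqrt q$ is not an integer power of $q$). Once this is in hand, (i) is an immediate contrapositive: a nontrivial admissible $e$ must have $\iota^{*}e\ne e$, so some $\alpha_0$ with $e(\alpha_0)\ne e(q/\alpha_0)$ lies outside $R_X^{\iota}$, and (ii), the nontriviality of $e_0$ in (iii), and of $m\cdot e_0$ all follow at once from this same $\alpha_0$, since scaling and truncating to the complement of $R_X^{\iota}$ preserve the failure of the first clause of \eqref{relationT}. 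By contrast, the paper repeats a similar $\pm\sqrt q$ contradiction argument three separate times, once per part, and in (ii) even has to split into subcases depending on whether the first clause of triviality already fails. Your single lemma absorbs all of that; it is essentially a refinement of Remark~\ref{supersing}(i) (which only observes $\Pi(e)$ is $\pm$ an integer power of $q$ under $\iota^{*}$-invariance) to the full triviality statement. The admissibility computations — $\Pi(m\cdot e)=q^{md}$ for (ii), and the factorization $\Pi(m\cdot e)=\Pi(m\cdot e_0)\cdot\prod_{\beta\in R_X^{\iota}}\beta^{m e(\beta)}$ together with $\beta^{m e(\beta)}=q^{m e(\beta)/2}$ for even $m$ in (iii) — match the paper. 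The weight inequality \eqref{wte0} is immediate as you say. The only place your exposition is a touch imprecise is the phrase ``the parity needed to make each $\beta^{e(\beta)}$ a rational power of $q$'': in the two-element case $\#(R_X^{\iota})=2$, it is only the \emph{product} $\beta^{e(\beta)}(-\beta)^{e(-\beta)}$ that is forced to be a power of $q$, not each factor separately, and one must use the sign $(-1)^{e(-\beta)}$ together with the parity of $e(\beta)+e(-\beta)$ to conclude; but the case check you gesture at does yield this correctly, so the gap is one of wording rather than substance.
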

 
 \begin{proof}
 If  $R_X^{\iota}=\emptyset$ then all three assertions of Lemma are obviously true.  So, let us assume that
 $R_X^{\iota}\ne\emptyset$.
 
 (i) Suppose that $R_X=R_X^{\iota}$. Then 
 $$\prod_{\alpha\in R_X^{\iota}}\alpha^{e(\alpha)} =q^d  \text{ and } \sum _{\alpha\in R_X^{\iota}}e(\alpha)=2d$$
 is an even integer. Since $ R_X^{\iota}$ consists of one or two elements and $e$ is {\sl nontrivial}, there is $\beta \in  R_X^{\iota}$
 such the $e(\beta)$ is odd. This implies that  $ R_X^{\iota}$ consists of two elements, say, $\beta$ and $-\beta$,
 $$e(\beta)+e(-\beta)=2d$$
   and both integers
 $e(\beta)$ and $e(-\beta)$ are {\sl odd}. This implies that (recall that $\beta^2=q$)
 $$q^d =\beta^{e(\beta)} \cdot (-\beta)^{e(-\beta)}=\beta^{e(\beta)} \cdot (-1)\cdot \beta^{e(-\beta)}=
 -\beta^{e(\beta)+e(-\beta)}=-\beta^{2d}=-q^d.$$
 So, $q^d=-q^d$, which is absurd. The obtained contradiction proves (i).
 
 (ii).
 The admissibility of $m\cdot e$ is obvious.  The nontriviality is also clear  if there exists $\alpha \in R_X\setminus R_X^{\iota}$ with 
 $$e(\alpha) \ne e(q/\alpha).$$
 So, we may assume that  $m\cdot e$ is trivial (we are going to arrive to a contradiction),
  and
 \begin{equation}
 \label{push}
 e(\alpha)=e(q/\alpha) \ \forall \alpha\in R_X\setminus R_X^{\iota}. 
 \end{equation}
 This implies that
 there is an integer $n$ such that
 $$\prod_{\alpha\in R_X^{\iota}}\alpha^{e(\alpha)}=q^n.$$
 It follows  that
 the sum 
 \begin{equation}
 \label{TwoN}
 \sum_{\alpha\in R_X^{\iota}}e(\alpha)=2n\in 2\Z
 \end{equation}
  is an {\sl even} integer. On the other hand,
 the nontriviality of $e$ combined with \eqref{push} implies that there is $\beta \in R_X^{\iota}$ with {\sl odd} $e(\beta)$.
 Since $\#(R_X^{\iota})\le 2$, it follows from \eqref{TwoN} that  integer $e(\alpha)$ is odd for all $\alpha \in  R_X^{\iota}$.
 It follows that
 $\prod_{\alpha\in R_X^{\iota}}\alpha =q^d$ for some integer $d$. Therefore $\#(R_X^{\iota})=2d$ is a positive even integer,
 i.e., $R_X^{\iota}$ consists of two elements  $\beta,-\beta$ with $\beta^2=q$; in addition, both integers 
 $e(\beta)$ and $e(-\beta)$ are odd. The same computations as in the proof of (i)  give us that
 $$q^d=\beta^{e(\beta)} (-\beta)^{e(-\beta)}=-q^d,$$
 hence, $q^d=-q^d$. The obtained contradiction proves  the nontriviality of $m \cdot e$.
 
 (iii)  Suppose that $e_0$ is trivial, i.e.,
 $$e(\alpha)=e(q/\alpha) \ \forall \alpha \in R_X\setminus R_X^{\iota}.$$
  Then it is admissible and therefore there is an integer $h$ such that 
 $$\prod_{\alpha \in R_X\setminus R_X^{\iota}}\alpha^{e_0(\alpha)}=\prod_{\alpha \in R_X\setminus R_X^{\iota}}\alpha^{e(\alpha)}=q^h.$$
  Since $e$ is admissible of degree $d$,
 $$ \prod_{\beta \in R_X^{\iota}}\beta^{e(\beta)}=q^{d-h}.$$
  The nontriviality of $e$ implies that there is $\beta \in R_X^{\iota}$ such that  integer $e(\beta)$ is {\sl odd}.
 Now the same computations as in the proof of (i) give us that $R_X^{\iota}$ consists of two elements $\beta$ and $-\beta$,
 both integers $e(\beta)$ and $e(-\beta)$ are odd and eventually, $q^{d-h}=-q^{d-h}$. The obtained contradiction proves that $e$ is {\sl nontrivial},
 which is the first assertion of (iii).
 
 Let us prove the second asssertion of (iii). Since $m$ is even, there is an integer $n$ such that $m=2n$.We have
 $$q^{md}=\left(\prod_{\alpha \in R_X}\alpha^{e(\alpha)}\right)^m=\left(\prod_{\alpha \in R_X\setminus R_X^{\iota}}\alpha^{e(\alpha)}\right)^m
 \times \left(\prod_{\beta \in  R_X^{\iota}}\beta^{ e(\beta)}\right)^m=$$
 $$\left(\prod_{\alpha \in R_X}\alpha^{m\cdot e_0(\alpha)}\right) 
 \times \left(\prod_{\beta \in  R_X^{\iota}}\beta^{2n\cdot e(\beta)}\right)=$$
 $$\left(\prod_{\alpha \in R_X}\alpha^{m \cdot e_0(\alpha)}\right) 
 \times \left(\prod_{\beta \in  R_X^{\iota}} q^{n\cdot e(\beta)}\right).$$
 This implies that $\prod_{\alpha \in R_X}\alpha^{m \cdot e_0(\alpha)}$ is an integral power of $q$,
 i.e., $m \cdot e_0$ is {\sl admissible}. The nontriviality of $m \cdot e_0$ follows from the nontriviality of $e_0$,
 because $e_0$ vanishes identically on $R_X^{\iota}$. This ends the proof of (iii).
 The last assertion of (iii) about weights follows readily from obvious inequality $\wt(e_0) \le \wt(e).$
 \end{proof}
  
  It turns out that one may easily construct a reduced admissible function when $k$ is {\sl small} w.r.t $X$.
  
  \begin{lem}
  \label{rootsR}
  Assume that there are distinct $\alpha_1, \alpha_2 \in R_X$ such that $\gamma:=\alpha_2/\alpha_1$
  is a root of unity.  Then there is a reduced admissible function $e \colon R_X \to \Z$,
  whose weight $w$  enjoys the following properties.
  $$w \le 4 \mathbf{e}_2(2g) \ \text{ if } p \ne 2; \  w \le 2 \mathbf{e}_3(2g)) \text{ if } p = 2.$$
  \end{lem}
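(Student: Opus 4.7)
The plan is to apply Lemma \ref{rootsG} to $\gamma=\alpha_2/\alpha_1\in\Gamma(X,k)$, producing a positive integer $m$ with $\gamma^m=1$, where $m\le 2\mathbf{e}_2(2g)$ if $p\ne 2$ and $m\le\mathbf{e}_3(2g)$ if $p=2$. Writing $\alpha_2=\gamma\alpha_1$ and using $\alpha_1\cdot(q/\alpha_1)=q$, one obtains the key multiplicative identity
\[
\alpha_2^m\cdot(q/\alpha_1)^m=\gamma^m q^m=q^m.
\]
The natural candidate for the desired function is then $e\colon R_X\to\Z$ supported on $\{\alpha_2,q/\alpha_1\}$ with value $m$ on each, giving an admissible function of degree $m$ and weight $2m$, which already matches the stated bounds via the $m$-estimate of Lemma \ref{rootsG}.

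What remains is to verify the three conditions in the definition of \emph{reduced}, which requires a short case analysis to handle possible coincidences among $\alpha_1,\alpha_2,q/\alpha_1,q/\alpha_2$. In the \textbf{generic case} $\alpha_1,\alpha_2\notin R_X^\iota$ and $\alpha_1\alpha_2\ne q$, these four elements are pairwise distinct and all lie outside $R_X^\iota$, so taking $e(\alpha_2)=e(q/\alpha_1)=m$ makes (i) immediate, (ii) holds because $q/\alpha_2$ and $\alpha_1$ are absent from the support, and (iii) is vacuous.

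The degenerate situations are handled by redistributing the weight onto a single element, preserving both admissibility and the weight bound $2m$. First, if $\alpha_1\alpha_2=q$, then $\alpha_2=q/\alpha_1$ and the two prescribed values collide; noting $\alpha_1\notin R_X^\iota$ (else $\alpha_1=\alpha_2$) and using $\alpha_1^{2m}=q^m$ (a direct consequence of $\gamma^m=1$ together with $\gamma=q/\alpha_1^2$), I would set $e(\alpha_1)=2m$. Second, if $\alpha_1\in R_X^\iota$ (so $q/\alpha_1=\alpha_1$) while $\alpha_2\notin R_X^\iota$, the identity $\alpha_2^{2m}=\gamma^{2m}\alpha_1^{2m}=q^m$ lets me set $e(\alpha_2)=2m$; the support $\{\alpha_2\}$ misses $R_X^\iota$ so (iii) is again vacuous, while (ii) reduces to $e(q/\alpha_2)=0$, which is clear.

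The main obstacle is condition (iii): it caps $e$ at $1$ on $R_X^\iota$, whereas the natural construction wants to place value $m$ or $2m$ there. The case analysis above is engineered precisely to sidestep this by always shifting weight so that the support of $e$ lies inside $R_X\setminus R_X^\iota$; once this is arranged, the weight equals $2m$ in every case, and the $m$-bounds from Lemma \ref{rootsG} deliver the claimed weight estimates.
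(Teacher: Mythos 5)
Your proposal follows the paper's proof in its main line: invoke Lemma \ref{rootsG} to bound the order $m$ of $\gamma$, use the identity $\alpha_2^m(q/\alpha_1)^m=\gamma^m q^m=q^m$ to get an admissible function of weight $2m$ supported on $\{\alpha_2,q/\alpha_1\}$, and collapse the support to a single point with exponent $2m$ in degenerate situations. Your case $\alpha_1\alpha_2=q$ is exactly the paper's second alternative. Where you differ is in taking condition (iii) of reducedness seriously: the paper's first alternative ($\alpha_2\ne q/\alpha_1$) simply declares $e(\alpha_2)=e(q/\alpha_1)=m$ to be reduced, which fails whenever $\alpha_1$ or $\alpha_2$ lies in $R_X^{\iota}$, since then $e$ takes the value $m\ge 2$ at a square root of $q$. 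Your redistribution $e(\alpha_2)=2m$ in the case $\alpha_1\in R_X^{\iota}$, $\alpha_2\notin R_X^{\iota}$ correctly repairs this, so on the cases you treat your argument is actually more careful than the paper's.

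Your case analysis is nevertheless not exhaustive. The omission of the symmetric case $\alpha_2\in R_X^{\iota}$, $\alpha_1\notin R_X^{\iota}$ is harmless: replace $\gamma$ by $\gamma^{-1}$ and set $e(\alpha_1)=2m$, using $\alpha_1^{2m}=\gamma^{-2m}\alpha_2^{2m}=q^m$. The real gap is the case where \emph{both} $\alpha_1,\alpha_2$ lie in $R_X^{\iota}$, i.e. $\{\alpha_1,\alpha_2\}=\{\sqrt{q},-\sqrt{q}\}$ and $\gamma=-1$: then all four elements $\alpha_1,\alpha_2,q/\alpha_1,q/\alpha_2$ lie in $R_X^{\iota}$, there is nowhere to shift the weight, and condition (iii) caps every exponent at $1$, so your strategy (and any construction supported on this pair) cannot produce a reduced function. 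Indeed, if $R_X=\{\sqrt{q},-\sqrt{q}\}$ — realized, e.g., by a product of two suitable supersingular elliptic curves over $\F_{p^2}$ — one checks directly that no reduced admissible function exists. You should know that the paper's own proof breaks down in exactly the same case (its first alternative applies, since $q/\alpha_1=\alpha_1\ne\alpha_2$, and yields a function violating (iii)), so this is a defect in the lemma as stated rather than a flaw peculiar to your argument; but a complete write-up must at least flag this case rather than leave it uncovered.
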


  \begin{proof}
  %{\bf Step 0}.
  %Assume that there are distinct $\alpha_1, \alpha_2 \in R_X$ such that $\gamma=\alpha_2/\alpha_1$
 % is a root of unity. 
 Clearly, $\gamma \in \Gamma(X,k)$. 
   By Lemma \ref{rootsG}, there is a positive integer $m$ such that 
 $$\gamma^m=1; \ m  \le 2 \mathbf{e}_2(2g) \ \text{ if } p \ne 2; \  m \le  \mathbf{e}_3(2g)) \text{ if } p = 2.$$
 Hence, it suffices to produce a reduced multiplicative relation of weight $2m$.
 To this end, notice that $q/\alpha_1\in R_X$ and
 $$\alpha_2^m (q/\alpha_1)^m=q^m.$$
 If $\alpha_2 \ne q/\alpha_1$ then we may define
 $$e \colon R_X \to \Z, \ e(\alpha_2):=m, e(q/\alpha_1):=m;   \ e(\alpha):=0 \ \text{ for all other } \alpha.$$
 Clearly, $e$ is a {\sl reduced admissible} function  of weight $2m$.
 
 Suppose that  $\alpha_2 = q/\alpha_1$. Since $\alpha_1 \ne \alpha_2,$
 $$\alpha_1 \ne q/\alpha_1, \ \alpha_1^2 \ne q.$$
 Then we have
 $$q^m=(\alpha_1 \alpha_2)^m=\alpha_1^{2m}, \ 
 \text{ i.e., }
 \alpha_1^{2m}=q^m.$$
 Now let us consider 
 $$e \colon R_X \to \Z, \ e(\alpha_1):=2m;   \ e(\alpha):=0 \ \text{ for all other } \alpha.$$
 Clearly, $e$ is a {\sl reduced admissible} function of weight 
 $2m$.
  \end{proof}
 
  The next Lemma asserts that the existence of a nontrivial admissible function implies the existence of a reduced admissible function,
  whose weight we can control.

 \begin{lem}
  \label{nonTrivRed}
  Let $w$ be a positive integer.
  Suppose that $k$ is not small w.r.t $X$ and there exists a nontrivial admissible function of weight $\le w$.
  
 Then there exist a nonempty subset $A_1\subset R_X$, an integer-valued function
 $\tilde{e} \colon A_1 \to \Z$, and a positive integer $s\le w$ that enjoy the following properties.
 \begin{enumerate}
 \item[(1)] 
 $\forall \alpha \in A_1$ we have $\frac{q}{\alpha}\not\in A_1, \ \tilde{e}(\alpha)>0$.
 \item[(2)] 
 \begin{equation}
 \label{A1R}
 \prod_{\alpha\in A_1}\alpha^{\tilde{e}(\alpha)}=q^s.
 \end{equation}
 In particular,  if we define
 $$f \colon R_X \to \Z, \ f(\alpha):=\tilde{e}(\alpha) \ \forall \alpha\in A_1; \ f(\alpha):=0 \ \forall \alpha\not\in A_1$$
 then $f$ is a reduced admissible function of weight $2s\le 2w$ that vanishes identically on $R_X^{\iota}$.
 %\eqref{A1R} is a reduced multiplicative relation between elements of $R_X$ of weight $2s$.
 \end{enumerate}
  
  %\end{itemize}
  \end{lem}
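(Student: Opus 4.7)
The plan is to work with the $\iota$-antisymmetrization $g := e - \iota^{*}e$ of the given nontrivial admissible function $e$. A direct computation, using the substitution $\beta = q/\alpha$ to show $\prod_{\alpha} \alpha^{e(q/\alpha)} = \prod_{\beta}(q/\beta)^{e(\beta)} = q^{2d}/q^{d} = q^{d}$ (where $d = \deg(e)$), yields $\prod_{\alpha \in R_{X}} \alpha^{g(\alpha)} = 1$, so $g$ is admissible of degree $0$. By construction $g(q/\alpha) = -g(\alpha)$, and in particular $g(\beta) = 0$ for every $\beta \in R_{X}^{\iota}$. Thus $g$ is already supported off $R_{X}^{\iota}$; the task is to convert its antisymmetric multiplicative relation into the desired admissibility identity of type \eqref{A1R}.

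The main obstacle is to verify $g \not\equiv 0$. For this I would invoke Lemma \ref{multiply}(iii), which says that the truncation $e_{0}$ of $e$ (equal to $e$ on $R_{X}\setminus R_{X}^{\iota}$ and zero on $R_{X}^{\iota}$) is itself nontrivial. Since $e_{0}$ vanishes identically on $R_{X}^{\iota}$, the parity condition ``$e_{0}(\beta)\in 2\Z$ for all $\beta\in R_{X}^{\iota}$'' holds automatically, so the only way for $e_{0}$ to be nontrivial is that there exist $\alpha \in R_{X}\setminus R_{X}^{\iota}$ with $e_{0}(\alpha) \ne e_{0}(q/\alpha)$. Both $\alpha$ and $q/\alpha$ lie outside $R_{X}^{\iota}$, so $e_{0} = e$ at both, and therefore $g(\alpha) = e(\alpha) - e(q/\alpha) \ne 0$.

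Now set $A_{1} := \{\alpha \in R_{X} : g(\alpha) > 0\}$, which is nonempty, disjoint from $R_{X}^{\iota}$, and, by $\iota$-antisymmetry of $g$, contains no pair $\{\alpha, q/\alpha\}$; indeed $\alpha \mapsto q/\alpha$ gives a bijection $A_{1} \leftrightarrow A_{2} := \{g<0\}$ along which $g$ changes sign. Splitting $1 = \prod_{\alpha} \alpha^{g(\alpha)}$ into its contributions from $A_{1}$ and $A_{2}$ and substituting $\beta = q/\alpha$ on the latter yields
\[
1 \;=\; \Bigl(\prod_{\alpha \in A_{1}} \alpha^{g(\alpha)}\Bigr)^{\!2}\, q^{-s}, \qquad s := \sum_{\alpha \in A_{1}} g(\alpha) \ge 1,
\]
i.e.\ $\prod_{\alpha\in A_{1}} \alpha^{2g(\alpha)} = q^{s}$. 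Setting $\tilde{e}(\alpha) := 2g(\alpha) > 0$ on $A_{1}$ gives \eqref{A1R}. For the weight bound, $g$ has degree zero, so $\wt(g) = 2\sum_{A_{1}} g = 2s$, while $\wt(g) \le \wt(e) + \wt(\iota^{*}e) = 2\wt(e) \le 2w$, giving $s \le w$. Extending $\tilde{e}$ by zero to all of $R_{X}$ then produces the reduced admissible function $f$ of degree $s$ and weight $2s \le 2w$ that vanishes on $R_{X}^{\iota}$.
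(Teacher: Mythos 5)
Your proof is correct and follows essentially the same route as the paper: both arguments pass to the $\iota$-antisymmetric part of $e$ (you via $g=e-\iota^{*}e$; the paper via $h_{2}=2e_{0}$ and then $h_{2}(\alpha)-h_{2}(q/\alpha)=2g(\alpha)$), both invoke Lemma \ref{multiply}(iii) to see this antisymmetric part is nonzero, and both take $A_{1}$ to be its positive support, producing the same $\tilde e=2g|_{A_{1}}$ and $s=\sum_{A_{1}}g$. The only difference is presentational---you avoid the auxiliary set $B$ and the intermediate exponents $d,n,m$ in the paper's splitting of the product.
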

  
  \begin{proof}
% In light of Lemma \ref{rootsR}, 
%we may and will assume that $k$ is {\sl not} small wrt $X$. 
%In particular, there is at most one $\beta \in R_X$ with $\beta^2=q$. B Then 
By  Lemma \ref{multiply},   $R_X\setminus R_X^{\iota}$ is {\sl not} empty.
Let $e \colon R_X \to \Z$ be a {\sl nontrivial} admissible function $e$ of weight $\le w$.
Let us consider (in the notation of Lemma \ref{multiply}) the function 
$$h_2=2\cdot e_0 \colon R_X \to \Z.$$
It follows from Lemma \ref{multiply} that $h_2$ is nontrivial admissible, 
it vanishes identically on $R_X^{\iota}$  and its weight does not exceed $2w$. This implies that 
 \begin{equation}
 \label{needReduced}
 \prod_{\alpha\in R_X\setminus R_X^{\iota}} \alpha^{h_2(\alpha)}=q^d, \ 2w \ge \sum_{\alpha\in R_X}|h_2(\alpha)|
 \end{equation}
 where $d$ is an integer such that 
 $$|d| \le \wt(h_2) \le 2w.$$
 
The nontriviality and vanishing everywhere at $R_X^{\iota}$ of $h_2$ imply that the subset $A$ of $R_X$ defined by
$$A: =\{\alpha \in R_X\mid h_2(\alpha) \ne h_2(q/\alpha)\}$$
is {\sl nonempty}.  It follows from the very definition that $A$ is $\iota$-invariant and does {\sl not} meet $R_X^{\iota}$.
Let us define the subset $A_1$ of $A$ by 
 $$A_1:=\{\alpha\in A\mid h_2(\alpha)>h_2(q/\alpha)\} \subset A\subset R_X.$$
 Clearly, if $\alpha \in A$ then $\alpha\in A_1$ if and only if $\bar{\alpha}=q/\alpha\not\in A_1$. This implies that
 $A_1$ is {\sl nonempty} and $A$ is the {\sl disjoint union} of $A_1$ and $\iota(A_1)$. In particular, 
 $\#(A)=2\#(A_1)$.
 
 On the other hand, if 
 $$B:=\{\beta\in R_X\setminus R_X^{\iota}\mid h_2(\beta)=h_2(q/\beta)\}\subset  R_X\setminus R_X^{\iota}\}$$
 then $B$ is $\iota$-invariant, $R_X\setminus R_X^{\iota}$ is a disjoint union of $A$ and $B$, and
 $$\prod_{\beta\in B}\beta^{h_2(\beta)}=q^n$$
 for some integer $n$ with
 $$|n| \le \wt(h_2) \le 2w.$$
  Since $R_X\setminus R_X^{\iota}$ is a disjoint union of $A$ and $B$, it follows from
 \eqref{needReduced} that
 $$\prod_{\alpha\in A}\alpha^{h_2(\alpha)}=\frac{q^d}{q^n}=q^{d-n}.$$
 Since $A$ is a disjoint union of $A_1$ and $\iota(A_1)$, we get
 $$q^{d-n}=\left(\prod_{\alpha\in A_1}\alpha^{h_2(\alpha)}\right)\times \left(\prod_{\alpha\in A_1}\iota(\alpha)^{h_2(\iota \alpha)}\right)=$$
 $$\left(\prod_{\alpha\in A_1}\alpha^{h_2(\alpha)}\right)\times \left(\prod_{\alpha\in A_1}(q/\alpha)^{h_2(q/ \alpha)}\right)=$$
 $$\left(\prod_{\alpha\in A_1}\alpha^{h_2(\alpha)-h_2(q/\alpha))}\right)\times q^m$$
 where $m:=\sum_{\alpha\in A_1}h_2(q/\alpha)\in \Z$. 
 If we define the function
 $$\tilde{e} \colon A_1 \to \Z,  \ \alpha\mapsto h_2(\alpha)-h_2(q/\alpha)$$
  then $\tilde{e}(\alpha)>0 \ \forall \alpha \in A_1$,
  $$\sum_{\alpha \in A_1}\tilde{e}(\alpha)\le \sum_{\alpha\in A_1}\left(|h_2(\alpha)|+|h_2(q/\alpha)|\right)=\sum_{\alpha\in A}|h_2(\alpha)|\le \wt(h_2)\le 2w,$$
   and
 $$q^{d-n}=\left(\prod_{\alpha\in A_1}\alpha^{\tilde{e}(\alpha)}\right)\times q^m,$$
 i.e.,
 $$\prod_{\alpha\in A_1}\alpha^{\tilde{e}(\alpha)}=q^{d-n-m}.$$
 It remains to put $s:=d-n-m$.
 This ends the proof.
 \end{proof}
 \end{sect}
 
 The following assertion contains Lemma \ref{rankGamma}. (Recall  that   $\Gamma_1(X,k)$ is defined in Subsection \ref{Ror}.)
 
 \begin{lem}
 \label{rank}
 Suppose that $k$ is not small w.r.t $X$.
 
 Then the following conditions are equivalent.
 
 \begin{itemize}
 \item[(1a)]
 There is a nontrivial admissible function on  $R_X$.
  \item[(1b)]
 There is a nontrivial admissible function on  $R_X$ that vanishes at $R_X^{\iota}$.
 \item[(2a)]
 There is a reduced  admissible function on  $R_X$.
  \item[(2b)]
 There is a reduced  admissible function on  $R_X$ that vanishes at $R_X^{\iota}$.
 \item[(3a)]
   $\rk(\Gamma(X,k)) \le \lfloor\#(R_X)/2\rfloor$.
    \item[(3b)]
   $\rk(\Gamma_1(X,k)) \le \lfloor\#(R_X)/2\rfloor$.
 \end{itemize}
 \end{lem}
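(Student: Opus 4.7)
The plan is to split the six-way equivalence into three blocks---the trivial link (3a)$\Leftrightarrow$(3b), the ``admissibility cluster'' (1a)$\Leftrightarrow$(1b)$\Leftrightarrow$(2a)$\Leftrightarrow$(2b), and the bridge (1b)$\Leftrightarrow$(3b)---and prove them in that order. The first is immediate from \eqref{rkGamma1}. For the cluster, I plan to observe that (1b)$\Rightarrow$(1a) and (2b)$\Rightarrow$(2a) are tautological; that (2a)$\Rightarrow$(1a) reduces to checking that a reduced admissible function is never trivial (on $R_X\setminus R_X^{\iota}$ the reduced and trivial constraints combine to force $e(\alpha)=0$, while on $R_X^{\iota}$ the reduced range $\{0,1\}$ meets $2\Z$ only in $\{0\}$, leaving $e\equiv 0$ and contradicting $\deg(e)\ge 1$); that (1a)$\Rightarrow$(1b) follows from Lemma~\ref{multiply}(iii) applied with $m=2$ to the function $e_0$; and that (1a)$\Rightarrow$(2b) is exactly Lemma~\ref{nonTrivRed}.

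The heart of the proof is the bridge (1b)$\Leftrightarrow$(3b). For this I would introduce the homomorphism
\[
\Pi_0 \colon \Z^{R_X\setminus R_X^{\iota}} \to L_X^{*}, \qquad e \mapsto \prod_{\alpha\in R_X\setminus R_X^{\iota}}\alpha^{e(\alpha)},
\]
whose image is the subgroup $\Gamma_0$ of $L_X^{*}$ generated by $R_X\setminus R_X^{\iota}$. The degenerate case $r_X=0$ (equivalently $R_X=R_X^{\iota}$, which under $k$ not small forces $\#(R_X)\le 1$) can be disposed of at once: every admissible function is then trivial, and $\rk\Gamma(X,k)\ge 1>0=\lfloor\#(R_X)/2\rfloor$, so (1b) and (3b) both fail. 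Assuming $r_X\ge 1$, the identity $\alpha\cdot(q/\alpha)=q$ places $q\in\Gamma_0$, so $\Gamma_0=\Gamma_1(X,k)$. Setting
\[
T_0 := \{e\in\Z^{R_X\setminus R_X^{\iota}} : e(\alpha)=e(q/\alpha) \ \forall \alpha\}, \qquad A_0 := \Pi_0^{-1}(q^{\Z}),
\]
condition (1b) translates exactly to $T_0 \subsetneq A_0$. A short count gives $\rk(T_0)=r_X$ (one free value per $\iota$-orbit), and since $\Pi_0(A_0)=q^{\Z}$ has rank one, $\rk(A_0)=\rk(\ker\Pi_0)+1=2r_X-\rk(\Gamma_1)+1$.

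The decisive step is converting the set-theoretic strict inclusion $T_0\subsetneq A_0$ into the rank inequality $\rk(T_0)<\rk(A_0)$, which rearranges to $\rk(\Gamma_1)\le r_X=\lfloor\#(R_X)/2\rfloor$ and is precisely (3b). This rests on the saturatedness of $T_0$ in $\Z^{R_X\setminus R_X^{\iota}}$, automatic because $T_0$ is the kernel of the $\Z$-linear map $e\mapsto e-\iota^{*}e$: if $\rk(T_0)=\rk(A_0)$ then $A_0\subseteq T_0\otimes\Q$, and saturatedness forces $A_0\subseteq T_0$, collapsing the strict inclusion. I expect this saturation step to be the main obstacle, since without it the existence of a nontrivial admissible function could in principle be explained by finite torsion in $A_0/T_0$ rather than by a genuine rank drop in $\Gamma(X,k)$.
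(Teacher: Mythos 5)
Your proof is correct and the overall scaffolding (the link (3a)$\Leftrightarrow$(3b) via \eqref{rkGamma1}, the cluster (1a)$\Leftrightarrow$(1b)$\Leftrightarrow$(2a)$\Leftrightarrow$(2b) via Lemma~\ref{nonTrivRed} and Lemma~\ref{multiply}, and the degenerate case $r_X=0$) matches the paper. Where you diverge is the bridge (1b)$\Leftrightarrow$(3b), and the difference is substantive. The paper works with explicit orbit representatives $\alpha_1,\dots,\alpha_{r_X}$ and $q$: for (3b)$\Rightarrow$(1a) it extracts a nontrivial relation $\prod\alpha_i^{f_i}=q^d$ directly from the rank defect of $\Gamma_1(X,k)$ on $r_X+1$ generators, and for (1a)$\Rightarrow$(3b) it first passes through (2b), then reads off a relation $\prod_{\alpha\in A_1}\alpha^{\tilde e(\alpha)}=q^s$ and augments $A_1$ with the missing orbit representatives to bound the rank. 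Your approach replaces this with the short exact sequence $0\to\ker\Pi_0\to A_0\to q^{\Z}\to 0$, the rank computations $\rk(T_0)=r_X$ and $\rk(A_0)=2r_X-\rk(\Gamma_1)+1$, and the reduction of the set-theoretic statement $T_0\subsetneq A_0$ to the rank inequality $\rk(T_0)<\rk(A_0)$ via saturatedness of $T_0$. This is cleaner and proves both directions of the bridge simultaneously, but it requires the saturation observation that the paper sidesteps: the paper never confronts the ``finite torsion in $A_0/T_0$'' possibility you flag, because in the (3b)$\Rightarrow$(1a) direction it produces the nontrivial function directly from the kernel of $\Z^{r_X+1}\twoheadrightarrow\Gamma_1$, and in the (1a)$\Rightarrow$(3b) direction it reads the rank bound off an explicit relation among explicit generators. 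Both routes are valid; yours is more conceptual, the paper's requires less machinery. One small point worth making explicit in your write-up: your use of Lemma~\ref{multiply}(iii) with $m=2$ for (1a)$\Rightarrow$(1b) is slightly more economical than the paper's route (1a)$\Rightarrow$(2b)$\Rightarrow$(1b), and your check that a reduced admissible function is never trivial (filling in the paper's ``(2a)$\Rightarrow$(1a) obviously'') is a welcome bit of rigor.
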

 
 \begin{proof}
 Obviously, (1b) implies (1a),  (2b) implies (2a),   (2a) implies (1a), and (2b) implies (1b). By Lemma \ref{nonTrivRed}, (1a) implies (2b).
 This implies that (1a), (1b), (2a), (2b) are equivalent.
 
 In light of \eqref{rkGamma1},  conditions (3a) and  (3b) are   equivalent.
 %, since $\#(R_X\setminus R_X^{\iota})$ is even and
 %$$\#(R_X\setminus R_X^{\iota}) \le \#(R_X) \le \#(R_X\setminus R_X^{\iota})+1,$$
 %because $0\le \#( R_X^{\iota})\le 1$.
 
 In order to handle conditions (3), let us discuss the parity of $\#(R_X)$, using the observations and notation of Subsection \ref{involution}.
 
  In order to check the equivalence of (1) and (3), let us start with the ``degenerate'' case $r_X=0$, i.e., $R_X=R_X^{\iota}=\{\beta\}$.
 Then $\Gamma(X,k)$ is an infinite cyclic group generated by $\beta$ containing the index $2$ subgroup generated by $\beta^2=q$.
 Therefore   $\rk(\Gamma(X,k)) =1 > 0$, i.e., (3a) does {\sl not} hold. On the other hand, we have already seen (Lemma \ref{multiply})
 that if $R_X=\{\beta\}=R_X^{\iota}$ then (1a) does {\sl not} hold.
 
 So, we may  assume that $R_X \ne R_X^{\iota}$. Then the positive integer
 $r_X=\lfloor\#(R_X)/2\rfloor$ is the number of all $\iota$-orbits $O_1, \dots, O_{r_X}$ in $R_X \setminus R_X^{\iota}$, see Subsection \ref{Ror}.
 If we choose any element $\alpha_i$ of $O_i$ for all $i$ then the $2r_X$-element set
 $$R_X \setminus R_X^{\iota}=\{\alpha_1, q/\alpha_1, \dots, \alpha_{r_X},q/\alpha_{r_X}\}$$
 and $\Gamma_1(X,k)$ is generated by $q$ and $\{\alpha_1,\dots, \alpha_{r_X}\}$, see Subsection \ref{Ror}.
 
 Suppose that (3b)  holds.  This means that  $\rk(\Gamma_1(X,k))\le r_X$.
 Hence, there are $(r_X+1)$ integers $f_1, \dots, f_{r_X}; d$ {\sl not all} zeros, such that
 \begin{equation}
 \label{Delta}
 \prod_{i=1}^{r_X}\alpha_i^{f_i}=q^d.
 \end{equation} 
 Clearly, {\sl not all} $f_1, \dots, f_{r_X}$  are zeros.  Let us define the function
 \begin{equation}
 \label{Deltaf}
 e \colon R_X \to \Z, \ e(\alpha_i)=f_i \  \forall i=1, \dots, r_X; \ f(\alpha)=0 \ \text{ for all other } \alpha.
 \end{equation}
 In light of \eqref{Delta} and \eqref{Deltaf}, $e$ is a nontrivial  admissible function. Hence,
 (1a) holds. 
 
 Now assume that (1a) holds. Then (2b) holds, i.e., there are a {\sl nonempty} subset $A_1\subset R_X$, a function
 $\tilde{e} \colon A_1 \to \Z$ and a positive integer $s$ that enjoy the following properties.
 \begin{itemize}
 \item[(i)] $A_1$ and $\iota(A_1)$ do not meet each other;
 \item[(ii)] $ \tilde{e}(\alpha)>0 \ \forall \alpha\in A_1$;
 \item[(iii)] $\prod_{\alpha \in A_1}\alpha^{\tilde{e}(\alpha)}=q^s$.
 \end{itemize}
 Let us put $n:=\#(A_1)$ and let $A_1=\{\alpha_1, \dots \alpha_n\}$.  Then all
 $O_i=\{\alpha_i,q/\alpha_i\}$ are disjoint $2$-element orbits in $R_X\setminus R_X^{\iota}$. In particular, $n\le r_X$.
 
If $n=r_X$ then
 $\{\alpha_1, \dots, \alpha_{r_X}; q\}$ generate $\Gamma_1(X,k)$. The property (iii)
 implies that the rank of this group does not exceed $r_X$, i.e., (3b) holds.
 
 Now assume that $n<r_X$. Then there are  precisely $(r_X-n)$ {\sl other} two-element $\iota$-orbits $O_j$
 in $R_X$  ($j=n+1, \dots r_X$). If we pick for all $j$ an element $\delta_j \in O_j$ then $O_j=\{\delta_j, q/\delta_j\}$
 ($n+1 \le j \le r_X$). Then $\{\alpha_1, \dots, \alpha_n; \delta_{n+1}, \dots, \delta_{r_X};q\}$ generate a 
 subgroup of finite index in $\Gamma_1(X,k)$. The property (iii)
 implies that the rank of this group does not exceed $r_X$, i.e., (3b) holds.  This ends the proof.
 \end{proof}

 \section{Frames and Skeletons of Abelian Varieties over Finite Fields}
 \label{RelationProof}

 In the course of our proof of Theorem \ref{mainRelation} we will need the following notion.
 
 \begin{defn}
 Let $g$ be a positive integer. A $g$-frame is a triple $(M,r,U)$ that consists
 of positive integers $M$ and $r$, and a finite subset 
 $$U \subset \Q^M$$ of {\sl nonzero} vectors
 that enjoy the following properties.
 
 \begin{itemize}
 \item[(i)]
 $M$ divides $2^g g!$,  $r\le g$, and $\#(U)=2r$.
  \item[(ii)]
  $U \subset \Sl(g)^M\subset \Q^M$
  (see Lemma \ref{SLP} for the definition of the finite subset $\Sl(g)\subset \Q$).
  \item[(iii)] A vector $u\in \Q^M$ lies in $U$ if and only if
  $\mathbf{1}-u$ lies in $U$. Here
  $\mathbf{1}=(1, \dots,1)\in \Q^M$ is the vector, all whose coordinates are $1$.
  \item[(iv)]
  Let $\Delta(U)$ be the additive subgroup of $\Q^M$ generated by $\mathbf{1}$ and all elements of $U$.
  Then the rank of $\Delta(U)$  does not exceed $r$.
 \end{itemize}
 \end{defn}
 
 \begin{rem}
 \label{finiteF}
 The {\sl finiteness} of $\mathrm{Slp}(g)$ implies that the set of all frames (for a given $g$) is finite.
 \end{rem}
 
 \begin{sect}
 \label{invF}

The map
 \begin{equation}
 \label{iotaF}
 \iota_F \colon \Q^M \to \Q^M, \ u \mapsto \mathbf{1}-u
 \end{equation}
 is an involution, whose only fixed point is
 $$\frac{1}{2}\cdot \mathbf{1}=(1/2, \dots,1/2).$$
 Notice that 
 $$\iota_F(U)=U.$$
 Since $\#(U)$ is {\bf even}, $U$  does {\sl not} contain the {\sl fixed point} $\frac{1}{2}\cdot \mathbf{1}$
 and therefore splits
   into a disjoint union of $2$-element $\iota_F$-orbits  $O_1, \dots, O_r$.
 If we choose in each $O_i$ a vector $u_i \in O_i$ then
 \begin{equation}
 \label{UrOrbits}
 O_i=\{u_i,\mathbf{1}-u_i\} \ \forall i=1, \dots, r; \ U=\{u_1, ,\mathbf{1}-u_1, \dots, u_i, \mathbf{1}-u_i, \dots, \mathbf{1}-u_r\}
 \end{equation}
 Property (iv) combined with \eqref{UrOrbits} implies that there  exist integers $a_1, \dots, a_r$ not all zeros and an integer $d$
 such that
 \begin{equation}
 \label{relationF}
 \sum_{i=1}^r a_i u_i=d \cdot \mathbf{1}=(d,\dots, d).
 \end{equation}
 \end{sect}

\begin{lem}
\label{boundU}
Let $g$ be a positive integer. Then there is a positive integer $C(g)$ that depends only on $g$ and enjoys the following property.

 Let  $(M,r,U)$ be a $g$-frame. Then there are exist $r$ integers $a_1, \dots, a_r$ not all zeros, an integer $d$,
 and $r$ distinct vectors $u_1, \dots ,u_r$ in $U$ such that:
 
 \begin{itemize}
 \item[(i)]
  the $2r$-element set
$U=\{u_1, \mathbf{1}-u_1, \dots, u_i,\mathbf{1}-u_r\}$;
\item[(ii)] $\sum_{i=1}^r a_i u_i=d \cdot \mathbf{1}=(d,\dots, d)$;
\item[(ii)] $\sum_{i=1}^r |a_i| \le C(g)$.
\end{itemize}
\end{lem}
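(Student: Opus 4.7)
The plan is to reduce the claim to a finiteness argument via Remark \ref{finiteF}. For a single fixed $g$-frame $(M,r,U)$ I would first invoke the orbit analysis of Subsection \ref{invF}: the involution $\iota_F$ of \eqref{iotaF} has unique fixed point $\tfrac{1}{2}\mathbf{1}$, which cannot lie in $U$ because $\#(U)=2r$ is even and $U$ is $\iota_F$-stable. Hence $U$ decomposes into exactly $r$ two-element $\iota_F$-orbits $O_1,\dots,O_r$, and choosing one representative $u_i\in O_i$ yields $r$ distinct vectors with $U=\{u_1,\mathbf{1}-u_1,\dots,u_r,\mathbf{1}-u_r\}$, which is condition (i).

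Next, although $\Delta(U)$ is defined as being generated by $\mathbf{1}$ together with all of $U$, the identity $\mathbf{1}-u_i = \mathbf{1}-u_i$ lets each $\mathbf{1}-u_i$ be expressed as a $\Z$-combination of $\mathbf{1}$ and $u_i$. Thus the $r+1$ vectors $\mathbf{1},u_1,\dots,u_r$ already generate $\Delta(U)$. Property (iv) of the frame says $\rk(\Delta(U))\le r$, so these $r+1$ generators must admit a nontrivial $\Z$-linear relation
$$c_0\,\mathbf{1}+\sum_{i=1}^r a_i u_i=0.$$
If every $a_i$ vanished, the relation would degenerate to $c_0\,\mathbf{1}=0$, forcing $c_0=0$ and contradicting nontriviality; hence some $a_i\ne 0$, and setting $d:=-c_0$ yields condition (ii).

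To promote existence into a uniform bound, I would define $C(M,r,U)$ to be the minimum of $\sum_{i=1}^r|a_i|$ taken over all tuples $(u_1,\dots,u_r;a_1,\dots,a_r;d)$ satisfying (i) and (ii) for that particular frame. This minimum is a well-defined positive integer, since the set of valid tuples is nonempty (by the preceding paragraph) and the weights $\sum|a_i|$ lie in $\NN$. By Remark \ref{finiteF}, the collection of $g$-frames is finite, because $M$ divides $2^g g!$, $r\le g$, and $U$ is a subset of the finite set $\Slp(g)^M$. I would therefore set
$$C(g):=\max_{(M,r,U)}\, C(M,r,U),$$
a maximum over a finite index set that depends only on $g$; this $C(g)$ satisfies (iii) by construction.

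The argument is essentially a compactness-style reduction, so no serious obstacle is expected. The one point worth double-checking is that the $r$ representatives $u_i$ chosen to realize (i) can simultaneously be used in the relation (ii), but this is automatic: once we fix the generating set $\mathbf{1},u_1,\dots,u_r$ of $\Delta(U)$, the rank inequality produces the relation directly in terms of those same $u_i$.
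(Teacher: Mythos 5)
Your proposal is correct and follows essentially the same route as the paper: the paper's proof simply cites the construction of Subsection \ref{invF} (the $\iota_F$-orbit decomposition of $U$ and the relation \eqref{relationF} coming from $\rk(\Delta(U))\le r$ applied to the $r+1$ generators $\mathbf{1},u_1,\dots,u_r$) together with the finiteness of the set of $g$-frames from Remark \ref{finiteF}. Your only addition is to spell out the minimize-per-frame, maximize-over-frames step explicitly, which is exactly what the paper leaves implicit.
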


\begin{proof}
The assertions follow readily from the construction of Subsection \ref{invF} combined with Remark \ref{finiteF}.

\end{proof}
 
 \begin{sect}
 \label{skeleton}
 Let $X$ be a $g$-dimensional abelian variety over a finite field $k$ of characteristic $p$.
 Suppose that $k$  is {\sl not small} with respect to $X$ and there exists a nontrivial admissible function $R_X \to \Z$. The aim of this subsection is to assign to $X$
 a certain $g$-frame that we call the {\sl skeleton} of $X$.
 
  First, let us put $r:=r_X$ and
 $M:=M_X:=\#(S(p))$ where $S(p)$ is the set of maximal ideals in $\Oc_{L_X}$ that lie above $p$ (see Remark \ref{ordP}).
It follows from Lemma \ref{basicL} that $M$ divides $2^g\cdot g!$.
 By Lemma \ref{multiply}, the existence of a nontrivial admissible function implies that $R_X \ne R_X^{\iota}$ and $r=r_X$ is a positive integer.
 In addition  (see \eqref{rXdef}),
 $$r \le g, \ 2r=\#(R_X\setminus R_X^{\iota}).$$
 Let us choose an order on the $M$-element set $S(p)$. This allows us to identify $S(p)$ with $\{1, \dots, M\}$ and $\Q^{S(p)}$ with $\Q^M$.
 Let us put
 $$U=U_X:=w_X(R_X\setminus R_X^{\iota})\subset \Q^{S(p)}=\Q^M$$
 (where homomorphism $w_X$ is defined in Remark  \ref{ordP}).
 It follows from Remark  \ref{ordP}(d) that the map
 \begin{equation}
 \label{inRX}
 R_X\setminus R_X^{\iota} \to U_X,  \ \alpha \mapsto w_X(\alpha)
 \end{equation}
 is {\sl injective};  in particular,
 $$2r=2 r_X=\#(R_X\setminus R_X^{\iota})=\#(U_X).$$
 Since $\ker(w_X)$ consists of roots of unity (see Remark  \ref{ordP}(d)), the rank of $\Delta(U_X)$ coincides
 with the rank of multiplicative $\Gamma_1(X,k)$ generated by $R_X\setminus R_X^{\iota}$. The existence of a nontrivial
 admissible function implies (thanks to Theorem \ref{rank}) that
 \begin{equation}
 \label{rankU}
 \rk(\Delta(U_X))=\rk(\Gamma_1(X,k)) \le r_X.
 \end{equation}

I claim that $(M_X,r_X,U_X)$ is a $g$-frame. Indeed,  it follows from Remarks \ref{ordP} that
\begin{equation}
\label{ordPrevisited}
w_X(q)=\mathbf{1};  w_X(\alpha)\ne 0, \  w_X(q/\alpha)=\mathbf{1}-w_x(\alpha) \ \forall \alpha \in R_X\setminus R_X^{\iota}.
\end{equation}
 This implies that $(M_X,r_X,U_X)$ enjoys the properties (i)-(iii).  
 %It follows from Remark \ref{ordP}(e) combined with   Remark \ref{ssF}  that (iv) also holds. 
  As for (iv), its validity follows from \eqref{rankU}.
\end{sect}

\begin{proof}[Proof of Theorem \ref{mainRelation}]
Let $g$ be a positive integer.
In light of Lemma \ref{rootsR}, we may and will assume that $k$ is {\sl not small} w.r.t. $X$.
In light of Lemma \ref{nonTrivRed}, it suffices to prove the following assertion.

{\bf Claim}. {\sl  There exists a positive integer $E(g)$ that depends only on $g$ and enjoys the following property.
Suppose that $X$ is a $g$-dimensional abelian variety over a finite field $k$ such that $k$ is not small w.r.t. $X$ and there exists a nontrivial admissible function $R_X \to \Z$.

Then there exists a nontrivial admissible function $R_X \to \Z$
of weight $\le E(g)$.}

\begin{proof}[Proof of Claim]
Let $X$ be an $g$-dimensional abelian variety over a finite field $k$ such that $k$ is not small w.r.t. $X$ and there exists a nontrivial admissible function $R_X \to \Z$.
Let us consider the corresponding $g$-frame $(M_X,r_X,U_X)$. It follows from the injectiveness of the map \eqref{inRX} combined with Lemma \ref{boundU}
that there exist  $r_X$ distinct elements $\alpha_1, \dots, \alpha_{r_X} \in R_X\setminus R_X^{\iota}$, $r_X$ integers $a_1, \dots, a_{r_X}$, and an integer $d$
that enjoys the following properties.

\begin{enumerate}
\item[(1)]
 $R_X\setminus R_X^{\iota}=\{\alpha_1,q/\alpha_1 \dots, \alpha_{r_X},q/\alpha_{r_X}\}$. 
\item[(2)]
Not all $a_1, \dots a_{r_X}$ are zero.
\item[(3)]
$\sum_{i=1}^{r_X} a_i w_X(\alpha_i)=d \cdot \mathbf{1}=(d,\dots, d)$.
\item[(4)] $\sum_{i=1}^r |a_i| \le C(g)$.
(Here $C(g)$ is as in Lemma  \ref{boundU}.) 
\end{enumerate}

It follows from Remark \ref{ordP}(d) that there exists
a root of unity $\gamma \in \Gamma(X,k)$ such that 
$$\prod_{i=1}^{r_X}\alpha_i^{a_i}=q^d \gamma.$$
According to Lemma \ref{rootsG}, there exists a positive integer
$m \le D(g)$ such that $\gamma^m=1$. (See Lemma \ref{rootsG}
for the explicit formula of $D(g)$.) This implies that
$$\prod_{i=1}^{r_X}\alpha_i^{ma_i}=q^{md}.$$
This implies that the function
$$e \colon R_X \to \Z, \ e(\alpha_i)=m\cdot a_i \ \forall \alpha_i, \ e(\alpha)=0 \ \text{ forall other } \alpha$$
is admissible. On the other hand, it follows from properties (1) and (2) that $e$ is {\sl nontrivial}.
In order to finish the proof of Claim, one has only  to notice
that
$$\wt(e)=\sum_{i=1}^{r_X}|a_i|=m \sum_{i=1}^{r_X}|a_i| \le D(g)\cdot C(g)=:E(g).$$

\end{proof}

This ends the proof of Theorem \ref{mainRelation}.

\end{proof}

\section{Applications of Gordan's Lemma}
\label{GordanL}

In order to prove Theorem \ref{semigroupADM}, we need the following 
variant of a classical result of P. Gordan.

\begin{lem}
\label{GordanVar}
Let $m$  and $s$ be  positive integers and $v_1, \dots v_s$ be elements of $\Q^m$. Let us consider the addditive semigroup
$$W=\{u \in \Z_{+}^m \mid u \cdot v_j=0 \ \forall j=1, \dots, s\}\subset \Z_{+}^m.$$
Then $W$ is a finitely generated semigroup of $\Z_{+}^m$.
\end{lem}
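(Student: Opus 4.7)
The plan is to recognize $W$ as the intersection of the integer lattice with a rational polyhedral cone and then prove finite generation by the standard Dickson/Gordan argument: take the minimal nonzero elements of $W$ with respect to coordinatewise partial order, show they are finite in number, and show they generate.

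First I would equip $\Z_{+}^m$ with the partial order $u \preceq u'$ defined by $u_i\le u'_i$ for all $i$. I would then invoke Dickson's lemma, which says that every nonempty subset of $\Z_{+}^m$ has only finitely many $\preceq$-minimal elements; this is elementary (induction on $m$, using the well-ordering of $\Z_{+}$). Applying Dickson's lemma to $W\setminus\{0\}$ produces a finite collection $u_1,\dots,u_N$ of $\preceq$-minimal nonzero elements of $W$.

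Next I would verify that $u_1,\dots,u_N$ generate $W$ as an additive semigroup (with $0$ as the empty sum). Take any $u\in W$ and argue by induction on $|u|:=\sum_{i=1}^m u_i\in\Z_{+}$. The case $u=0$ is immediate. Otherwise $u\in W\setminus\{0\}$, so by the choice of the $u_i$ there is some index $i$ with $u_i\preceq u$. Then $u-u_i\in \Z_{+}^m$, and
\[
(u-u_i)\cdot v_j = u\cdot v_j - u_i\cdot v_j = 0-0 = 0
\qquad\forall j=1,\dots,s,
\]
so $u-u_i\in W$. Since $u_i\ne 0$ we have $|u-u_i|<|u|$, so by the induction hypothesis $u-u_i$ is a nonnegative integer linear combination of $u_1,\dots,u_N$, and hence so is $u$.

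There is no real obstacle: the only substantive input is Dickson's lemma, and the linearity of the defining relations $u\cdot v_j=0$ is exactly what makes $W$ closed under the subtraction step, so the induction carries through cleanly. Note that no hypothesis on the $v_j$ beyond being vectors in $\Q^m$ is used; the statement would in fact hold for arbitrary vectors in $\R^m$.
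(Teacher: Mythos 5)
Your proof is correct, and it takes a genuinely different route from the paper. The paper clears denominators to make the $v_j$ integral, forms the rational polyhedral cone $\sigma$ generated by the standard basis and the $v_j$, invokes Gordan's Lemma (cited from the toric-varieties literature) to get a finite generating set $G$ of the dual lattice semigroup $\sigma^{\vee}\cap \Z^m$, and then uses a positivity trick --- since $0 = w\cdot v_j = \sum_i g_i\cdot v_j$ with every $g_i\cdot v_j \ge 0$, all the summands must vanish --- to conclude that the generators appearing in any decomposition of $w\in W$ already lie in $W$. You instead argue directly and self-containedly: Dickson's lemma gives finitely many $\preceq$-minimal nonzero elements of $W$, and the induction on $|u|$ goes through because the defining conditions $u\cdot v_j = 0$ are linear, so $W$ is closed under the subtraction step $u \mapsto u - u_i$. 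Your argument avoids the external citation and the reduction to integral $v_j$, and, as you note, it proves the slightly more general statement for arbitrary $v_j\in\RR^m$; the paper's approach has the advantage of slotting the lemma into the standard Gordan/toric framework that readers may already know, and its positivity trick is reusable whenever one wants to cut a face out of an already finitely generated cone semigroup. Both proofs are complete; the one point you should make explicit if writing this up is that the existence of \emph{some} minimal $u_i\preceq u$ for each nonzero $u\in W$ follows from the well-foundedness of $\preceq$ (any strictly decreasing chain strictly decreases $|u|$), not merely from the finiteness of the set of minimal elements.
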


\begin{proof}
Replacing all $v_j$ by $N v_j$, where $N$ is a sufficiently divisible positive integer, we may and will assume that $v_j \in \Z^m$ for all $j=1, \dots, s$.

Let us consider  the rational polyhedral cone
$\sigma\subset \RR^m$ that is generated by the standard  basis of $\RR^m$ and all the vectors $\{v_1, \dots v_s\}$. Then
the dual cone is
$$\sigma^{\vee}=\{u\in \RR_{+}^m\mid u \cdot v_j\ge 0 \ \forall j=1, \dots, s\}.$$
By Gordan's Lemma  \cite[Ch. 1, Prop. 1.2.17]{Cox}, $\sigma^{\vee}\cap \Z^m$ is a finitely generated additive semigroup. Let $G$ be  a  finite subset of $\sigma^{\vee}\cap \Z^m$ 
 that contains $0$
and generates  $\sigma^{\vee}\cap \Z^m$.
Then the intersection  $G\cap W$ is a finite subset of $W$ that contains $0$. I claim that $G\cap W$ generates $W$ as a semigroup. Indeed, if $w \in W$ then $w \in \sigma^{\vee}\cap \Z^m$ and therefore  there exists a positive integer $r$ and  (not necessarily distinct) $r$ elements $g_1, \dots g_r \in G$ such that
$w=\sum_{i=1}^r g_i$. We have for all $j=1, \dots, s$
$$0=w  \cdot v_j=\sum_{i=1}^r g_i\cdot v_j, \ g_i\cdot v_j \ge 0  \ \forall i=1, \dots, s.$$
This implies that all $g_i\cdot v_j=0$ and therefore all $g_i \in W$, i.e., $g_i \in G\cap W$.
It follows that  $G\cap W$ generates $W$ as a semigroup. 
\end{proof}

We also need the following elementary observation.

\begin{lem}
\label{F2q}
Suppose that $X$ is an abelian variety of positive dimension $g$ over a finite field $k$ with $q$ elements.
%of characteristic $p$.
 Suppose that $k$ is sufficiently large w.r.t. $X$.
Then a nonnegative integer-valued function $e \colon R_X \to \Z_{+}$ of even weight is admissible if and only if
\begin{equation}
\label{normalizedF}
w_X\left(\prod_{\alpha\in R_X}(\alpha^2/q)^{e(\alpha)}\right)=0 \in \Q^{S(p)}.
\end{equation}
\end{lem}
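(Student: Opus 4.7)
The plan is to rewrite the product $\prod_{\alpha \in R_X}(\alpha^2/q)^{e(\alpha)}$ in a form that directly relates admissibility of $e$ to membership in $\ker(w_X)$, and then exploit the ``sufficiently large'' hypothesis together with Remark \ref{ordP}(d) to eliminate both a root-of-unity factor and a sign ambiguity.

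Concretely, I would first set
$$\beta := \prod_{\alpha \in R_X}\alpha^{e(\alpha)} \in \Gamma(X,k),$$
and observe that since $e(\alpha)\ge 0$ we have $\wt(e)=\sum_{\alpha} e(\alpha)$, so
$$\prod_{\alpha\in R_X}(\alpha^2/q)^{e(\alpha)} \;=\; \frac{\beta^{2}}{q^{\wt(e)}}.$$
For the forward implication, if $e$ is admissible of degree $d$ then $\beta=q^d$, and by \eqref{degreeE} applied to the nonnegative function $e$ one gets $\wt(e)=\sum e(\alpha)=2d$. Hence $\beta^2/q^{\wt(e)}=q^{2d-2d}=1$, and $w_X(1)=0$.

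For the converse, suppose $\wt(e)=2m$ is even and $w_X(\beta^2/q^{\wt(e)})=0$. Then $\beta^2/q^{\wt(e)}\in \ker(w_X)$, which by Remark \ref{ordP}(d) consists of roots of unity in $\Gamma(X,k)$. Because $k$ is sufficiently large w.r.t.\ $X$, the only root of unity in $\Gamma(X,k)$ is $1$, so $\beta^2=q^{\wt(e)}=q^{2m}$. Thus $(\beta/q^m)^2=1$, i.e.\ $\beta/q^m\in\{\pm 1\}\cap\Gamma(X,k)$. Since $-1$ is a nontrivial root of unity, it cannot belong to $\Gamma(X,k)$; therefore $\beta=q^m$, proving that $e$ is admissible (of degree $m$).

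No step is really an obstacle; the only point to watch is that the two uses of the sufficiently-large hypothesis play distinct roles. The first use promotes the kernel condition on $w_X$ to the multiplicative identity $\beta^2=q^{\wt(e)}$; the second use is needed precisely because evenness of $\wt(e)$ only gives $\beta=\pm q^{\wt(e)/2}$, and sufficiently-large rules out the minus sign. Without either assumption, the conclusion would fail (e.g.\ for odd weight, $\beta$ could equal $\pm q^k\sqrt{q}$, which is not a power of $q$).
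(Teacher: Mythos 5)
Your proof is correct and follows essentially the same path as the paper's: rewrite the product as $\beta^2/q^{\wt(e)}$, use Remark \ref{ordP}(d) to identify $\ker(w_X)$ with roots of unity, use sufficient largeness to force the value to be $1$, and then use torsion-freeness of $\Gamma(X,k)$ once more to rule out the $-1$ ambiguity in extracting a square root. The only cosmetic difference is that you present the two implications separately while the paper runs the argument as a single chain of equivalences.
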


\begin{rem}
\label{AdmEven}
Let $e \colon R_X \to \Z_{+}$ be an admissible  nonnegative integer-valued function.
Then its weight is twice its degree and therefore is {\sl even}. 
\end{rem}

\begin{proof}[Proof of Lemma \ref{F2q}]
Since $e$ is nonnegative, its weight  coincides with
$$\sum_{\alpha\in R_X} e(\alpha)=:n.$$
 Since this weight  is even, 
there is a nonnegative integer $d$ such that $n=2d$.

Now notice that in light of Remark \ref{ordP}(d), 
\eqref{normalizedF}
 holds if and only if $\prod_{\alpha\in R_X}(\alpha^2/q)^{e(\alpha)}$ is a root of unity.
This means that $\prod_{\alpha\in R_X}(\alpha^2/q)^{e(\alpha)}=1$, because $k$ is sufficiently large w.r.t. $X$. Hence,
\eqref{normalizedF}  means that
$$\left(\prod_{\alpha\in R_X}\alpha^{e(\alpha)}\right)^2=q^n \ \text{ with } n=\sum_{\alpha\in R_X} e(\alpha)=2d.$$
This means that
\begin{equation}
\label{pmqd}
\prod_{\alpha\in R_X}\alpha^{e(\alpha)}=\pm q^d.
\end{equation}
Since torsion-free $\Gamma(X,k)$  does not contain $-1$, \eqref{pmqd} is equivalent to
$$\prod_{\alpha\in R_X}\alpha^{e(\alpha)}= q^d,$$
i.e, $e$ is admissible.
\end{proof}

\begin{proof}[Proof of Theorem \ref{semigroupADM}]
Let $X$ be an abelian variety of positive dimension $g$ over a finite field $k$ of characteristic $p$. Suppose that $k$ is sufficiently large w.r.t. $X$.
Let us put $s:=\#(S(p))$. By Lemma \ref{basicL}, $s$ divides $2^g \cdot g!$. Let us choose an order in $S(p)$. This allows us to identify $S(p)$ with $\{1, \dots,s\}$
and $ \Q^{S(p)}$ with $\Q^s$.
Let us choose an order on $R_X$: it allows us to list elements of  $R_X$ as  $\{\alpha_1, \dots, \alpha_m\}$ with $m=\#(R_X)$. We have $m \le 2g$. Let us consider an additive group homomorpism
$$\tilde{w}_X \colon \Z^m \to \Q^{S(p)}=\Q^s, \ u=(a_1, \dots a_m)\mapsto w_X\left(\prod_{i=1}^m (\alpha_i^2/q)^{a_i}\right)=$$
$$2\sum_{i=1}^m a_i w_X(\alpha_i)-\left(\sum_{i=1}^m a_i\right)\cdot \mathbf{1}.$$
%By  Lemma \ref{F2q}, a vector $u=(a_1, \dots a_m)\in \Z_{+}^m$ lies in the kernel of $\tilde{w}_X$ if and only if the nonnegative integer-valued function
%$$e_u: R_X \to \Z_{+}, \ \alpha_i \mapsto a_i$$
%is {\sl admissible}.
Clearly, there is a unique collection of $s$ vectors $v_1, \dots v_s \in \Q^m$ such that
$$\tilde{w}_X(u)=(u\cdot v_1, \dots , u\cdot v_s) \ \forall u \in \Z^m.$$
It is also clear that all the coordinates of all $v_j$'s lie in the same finite set 
$$2\cdot S(g)-1:=\{2c-1\mid c \in S(g)\} \subset \Q$$
 that depends only on $g$.  This implies that all the $v_j$'s lie in the same finite subset 
  $$\left(2\cdot S(g)-1\right)^m \subset    \Q^m$$ 
 of $\Q^m$  that depends only on $g$ and $m$.  Combining this assertion with Lemma \ref{GordanVar}, we obtain 
  that for each positive integers $m \le 2g$ and $s$ dividing  $2^g \cdot g!$ there is a {\sl finite subset}
$F_0(g,m,s) \in \Z_+^{m}$ that depends only on $g$, $m$, $s$ and enjoys the following property.

{\sl If $\#(R_X)=m$ and $\#(S(p))=s$ then the additive semigroup $\ker(\tilde{w}_X)\cap \Z_{+}^m$ of $\Z_{+}^m$ is generated by a certain subset of  $F_0(g,m,s)$.}

Now let as define the weight $\wt(u)$ of any $u=(a_1, \dots, a_m) \in \Z_{+}^m$ as $\sum_{i=1}^m a_i$. It follows from Remark \ref{AdmEven} combined with Lemma \ref{F2q}
that an integer-valued nonnegative function
$$\mathbf{b}_u \colon R_X \to \Z_{+}, \  \alpha_i \mapsto a_i$$
is {\sl admissible} if and only if $u \in \ker(\tilde{w}_X)\cap \Z_{+}^m$ and $\wt(u)$ is {\sl even}. (It is also clear that each admissible nonnegative function $e: R_X \to \Z_{+}$
coincides with $\mathbf{b}_u$ for exactly one vector $u\in \Z_{+}^m$.) Then such $u$ may be presented as a sum of (not necessarily distinct) elements of $F_0(g,m,s)$. It may happen that
some elements of $F_0(g,m,s)$ in this sum have odd weight. Since the weight of $u$ is even, the number of such summands is even. By grouping them in pairs, we obtain
that $u$ is a finite sum of some even weight elements from $F_0(g,m,s)$ and  even weight elements from $F_0(g,m,s)+F_0(g,m,s)\subset \Z_{+}^m$. Now let $F(g,m,s) \subset \Z_{+}^m$ be 
the (finite) set of all even weight vectors from $F_0(g,m,s)$ and from 
$F_0(g,m,s)+F_0(g,m,s)$. Clearly, each $\mathbf{u} \in  F(g,m,s)$ gives rise to nonnegative admissible
$\mathbf{b}_{\mathbf{u}} \colon R_X \to \Z_{+}$ and each nonnegative admissible $e \colon R_X \to \Z_{+}$ may be presented as a linear combination of such $\mathbf{b}_{\mathbf{u}}$'s with nonnegative integer coefficients.
Now one only has to choose as $H(g)$ the largest of  the weights of $\mathbf{b}$ among all $\mathbf{u}$ (with {\sl even weight})
in the union of all
 $F(g,m,s)$ where $1 \le m \le 2g$ and  $s\mid 2^g \cdot g!$.
\end{proof}

 Theorem \ref{semigroupADM} implies readily the following assertion.
 
 \begin{cor}
 \label{contractionE}
 Let $g$ be a positive integer and $H(g)$ be as in  Theorem \ref{semigroupADM}.
 
 Let  $X$ an abelian variety of positive dimension $g$ over a finite field $k$. Let $e: R_X \to \Z_{+}$
 be a nonnegative admissible function. If $\wt(e)>H(g)$ then $e$ may be presented as a sum 
 $e=f_1+f_2$ of two nonnegative admissible functions
 $$f_1 \colon R_X \to \Z_{+}, \ f_2 \colon R_X \to \Z_{+}$$
 such that
 $2 \le \wt(f_2)\le H(g)$.
 \end{cor}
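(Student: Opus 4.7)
The plan is to derive Corollary \ref{contractionE} as a one-line consequence of Theorem \ref{semigroupADM} by peeling off a single generator from a nonnegative integral decomposition of $e$.

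First I would invoke Theorem \ref{semigroupADM} to fix, once and for all, generators $e_1,\dots,e_d \colon R_X \to \Z_{+}$ of the semigroup of nonnegative admissible functions, each with $\wt(e_i) \le H(g)$. Without loss of generality all $e_i$ are nonzero (discarding the zero function from the list does not affect the generation property). Given the $e$ of the Corollary, write $e = \sum_{i=1}^{d} c_i e_i$ with all $c_i \in \Z_{+}$. Because $\wt(e) > H(g) \ge 1$ the function $e$ is nonzero, so at least one $c_i$ is positive; pick such an index $i_0$.

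Next I would simply set
\[
f_2 := e_{i_0}, \qquad f_1 := e - e_{i_0} = (c_{i_0}-1)e_{i_0} + \sum_{j \ne i_0} c_j e_j .
\]
Then $f_1$ is visibly a nonnegative integer combination of the generators $e_j$, so it is pointwise nonnegative and admissible (sums of admissible functions are admissible). By construction $\wt(f_2) = \wt(e_{i_0}) \le H(g)$.

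The only thing to verify is the lower bound $\wt(f_2) \ge 2$. Since $e_{i_0}$ is a nonzero nonnegative integer-valued admissible function, Remark \ref{AdmEven} gives $\wt(e_{i_0}) = 2\deg(e_{i_0})$, which is an even integer; as $e_{i_0}\not\equiv 0$ we must have $\wt(e_{i_0}) \ge 2$. This completes the argument. There is no real obstacle here — the content of the Corollary is entirely packed into Theorem \ref{semigroupADM}, and the Corollary is just the observation that within the finitely-generated semigroup of nonnegative admissible functions one can always split off a single generator once the total weight has exceeded $H(g)$.
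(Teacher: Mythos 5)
Your argument is correct and is exactly the intended derivation: the paper states Corollary \ref{contractionE} with no proof beyond the remark that Theorem \ref{semigroupADM} ``implies readily'' it, and your peeling-off-a-generator step, together with the observation from Remark \ref{AdmEven} that a nonzero nonnegative admissible function has even positive weight (hence $\ge 2$), is precisely that one-line deduction.
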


\section{Linear Algebra}
\label{linAlgebra}

Throughout this section,  $V$ is a nonzero vector space of finite dimension $n$ over a field $K$ of characteristic $0$, and $E$ is an overfield of $K$. We write
$V_E$ for the $E$-vector space $V\otimes_K E$ of the $E$-dimension $n$. Let us put
$$V^{*}=\Hom_K(V,K),  \ V_E^{*}=\Hom_E(V_E,E).$$
Let $A \colon V \to V$ be a $K$-linear operator and
$$A^{*} \colon V^{*} \to V^{*},  \  \phi \mapsto \phi\circ A \ \forall \phi\in  V^{*}.$$
As usual, let us define
 $$A_E \in \End( V_E),   \  A_E (v\otimes e)= Av\otimes e  \ \forall v \in V, e \in E.$$
 Clearly,
 $$(A_E)^{*}=(A^{*})_E \colon V_E^{*} \to  V_E^{*}.$$
 \begin{rem}
 \label{eigenKE}
 Let $a \in K\subset E$ and $V(a)$ (resp. $V_E(a)$) be the eigenspace of $A$ (resp. of $A_E$) attached to eigenvalue $a$.
It is well known that the natural $E$-linear map
 $$V(a)\otimes_K E \to V_E(a)$$ is an isomorphism of $E$-vector spaces; in particular,
 $$\dim_K(V(a))=\dim_E( V_E(a)) \ \forall a \in K\subset E.$$
 \end{rem}
There are well known  natural isomorphisms  \cite[Ch. III, Sect. 7, Prop. 7]{Bourbaki} of graded $K$-algebras
$$\wedge(V^*)=\oplus_{j=0}^{n}\wedge_K^{j} (V^*)=\oplus_{j=0}^{n}\Hom_K(\wedge_K^j (V),K)$$
 and of graded $E$-algebras \cite[Ch. III, Sect. 7, Prop. 8]{Bourbaki} 
$$\wedge(V_E^*)=\oplus_{j=0}^{n} \wedge_E^j (V_E^*)=\oplus_{j=0}^{n}\Hom_E(\wedge_E^j (V_E),E)=\oplus_{j=0}^{n}\Hom_K(\wedge_K^j (V),K)\otimes_K E,$$
which give rise to the natural isomorphisms of $E$-vector spaces
\begin{equation}
\label{baseEKchange}
\wedge_K^{j} (V^*)_E \cong  \wedge_E^j (V_E^*).
\end{equation}

\begin{sect}
\label{wedgeImage}
Let $i$ and $j$ be nonnegative integers.
The multiplication in $\wedge(V^*)$ (resp. in $\wedge(V_E^*)$) gives rise to the surjective $K$-linear map
\begin{equation}
\label{wedgeKsurGeneral}
\Lambda_{i,j,K} \colon \wedge_K^{i} (V^*)\otimes_K \wedge_K^{j} (V^*)\twoheadrightarrow \wedge_K^{i+j} (V^*), \ \psi_i\otimes \psi_j \mapsto \psi_i\wedge \psi_j 
\end{equation}
and to the surjective $E$-linear map
\begin{equation}
\label{wedgeEsurGeneral}
\Lambda_{i,j,E} \colon \wedge_E^{i} (V_E^*)\otimes_E \wedge_E^{j} (V_E^*)\twoheadrightarrow \wedge_E^{i+j} (V_E^*), \ \psi_i\otimes \psi_j \mapsto \psi_i\wedge \psi_j. 
\end{equation}
Let $U$ be a $K$-vector subspace in $\wedge_K^{i} (V^*)$ and  $W$ be a $K$-vector subspace in $\wedge_K^{j} (V^*)$. Then obviously the images
$\Lambda_{i,j,K}(U\otimes_K W)\subset \wedge_K^{i+j} (V^*)$ and $\Lambda_{i,j,E}(U_E\otimes_E W_E)\subset \wedge_E^{i+j} (V_E^*)$ are related by
\begin{equation}
\label{wedgeUVKE}
\Lambda_{i,j,E}(U_E\otimes_E W_E)=\Lambda_{i,j,K}(U\otimes_K W)_E.
\end{equation}
Here we identify $U_E$ (resp. $W_E$) with its isomorphic image in $\wedge_K^{i} (V^*)_E=\wedge_E^{i} (V_E^*)$
(resp. in  $\wedge_K^{j} (V^*)_E=\wedge_E^{j} (V_E^*)$).

The equality \eqref{wedgeUVKE} implies readily its own generalization. Namely,  let $n$ be a positive integer and suppose that for each positive integer $r\le n$
we are given $K$-vector subspaces
$$U_r \subset \wedge_K^{i} (V^*), \ W_r \subset \wedge_K^{j} (V^*).$$
Then
\begin{equation}
\label{wedgeUWrKE}
\sum_{r=1}^n\Lambda_{i,j,E}(U_{r,E}\otimes_E W_{r,E}) =\left(\sum_{r=1}^n \Lambda_{i,j,K}(U_r\otimes_K W_r)\right)_E.
\end{equation}
Here  
$$U_{r,E}=U_r\otimes_K E,  \ W_{r,E}=W_r\otimes_K E.$$
\end{sect}

\begin{sect}
The operators $A^{*}$ and $A_E^{*}$ give rise to the  graded $K$-algebra and graded $E$-algebra {\sl endomorphisms} 
%\begin{equation}
%\label{wedgeAlgebra}
$$\wedge(A^*) \colon \wedge(V^*) \to \wedge(V^*), \ \wedge(A_E^*): \wedge(V_E^*) \to \wedge(V_E^*)$$
%\end{equation}
\cite[Ch. III, Sect. 7, Prop. 2]{Bourbaki}, whose homogeneous components are
$K$-linear and $E$-linear operators
%\begin{equation}
%\label{wedgeAlgebraJ}
$$\wedge^j(A^{*})\colon \wedge_K^{j} (V^*) \to \wedge_K^{j} (V^*), \ \wedge^j_E(A_E^{*}):\wedge_E^{j} (V_E^*) \to \wedge_E^{j} (V_E^*)$$
%\end{equation}
respectively, 
such that
\begin{equation}
\label{WEK}
\wedge^j(A_E^{*})=\wedge^j (A^{*})_E.
\end{equation}
Since $\wedge(A)$ and $\wedge(A_E^*)$ respect the multiplication in $\wedge(V^*)$ and $ \wedge(V_E^*) $ respectively,
\begin{equation}
\label{multipliWedge}
\wedge^i(A^{*})(\psi_i)\wedge \ \wedge^j(A^{*})(\psi_j)=\wedge^{i+j}(A^{*})(\psi_i\wedge \psi_j)\in  \wedge^{i+j} (V^{*}) \ \forall \psi_i \in \wedge^i (V^{*}), \psi_j \in \wedge^j (V^{*});
\end{equation}
$$\wedge^i(A_E^{*})(\psi_{i,E})\wedge \ \wedge^j(A_E^{*})(\psi_{j,E})=\wedge^{i+j}(A_E^{*})(\psi_{i,E}\wedge \psi_{j,E}) \in  \wedge^{i+j} (V_E^{*})$$ 
 $$ \forall \psi_{i,E} \in \wedge^i (V_E^{*}), 
\psi_{j,E} \in \wedge^j (V_E^{*}).$$
The following assertion is an immediate corollary of \eqref{multipliWedge} and \eqref{WEK}.
\end{sect}

\begin{lem}
\label{product EigenSpaces}
Let $j_1, j_2$ be positive integers such that $j_1+j_2 \le \dim(V)$.  Let $\lambda_1,\lambda_2$ be elements of $K$. Let
$\wedge^{j_r}_K (V^{*})(\lambda_r)\subset \wedge^{j_r}_K (V^{*})$ be the eigenspace of $\wedge^j_r(A^{*})$ attached to  $\lambda_r$ ($r=1,2$).
Then the image of the $K$-linear map 
$$\wedge^{j_1}_K (V^{*})(\lambda_1)\otimes_K \wedge_K^{j_2} (V^{*})(\lambda_2) \to \wedge_K^{j_1+j_2} (V^{*}),  \ \psi_{j_1}\otimes\psi_{j_2} \mapsto \psi_{j_1}\wedge\psi_{j_2}$$
lies in the eigenspace $\wedge^{j_1+j_2}_K (V^{*})(\lambda_1\lambda_2)$ of  $\wedge^{j_1+j_2}(A^{*})$ attached to  $\lambda_1 \lambda_2$.
\end{lem}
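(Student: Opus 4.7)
The proof is essentially a one-line computation that directly exploits the multiplicativity identity \eqref{multipliWedge}, which already encodes the fact that $\wedge(A^{*})$ is a graded algebra endomorphism of $\wedge(V^{*})$. My plan is to verify the claim on decomposable tensors and then invoke bilinearity.

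Concretely, I would pick arbitrary elements $\psi_{j_1}\in \wedge^{j_1}_K(V^{*})(\lambda_1)$ and $\psi_{j_2}\in \wedge^{j_2}_K(V^{*})(\lambda_2)$, so that by the definition of the eigenspaces one has $\wedge^{j_1}(A^{*})(\psi_{j_1})=\lambda_1\psi_{j_1}$ and $\wedge^{j_2}(A^{*})(\psi_{j_2})=\lambda_2\psi_{j_2}$. Then I would apply $\wedge^{j_1+j_2}(A^{*})$ to the product $\psi_{j_1}\wedge\psi_{j_2}$; by \eqref{multipliWedge} this equals $\wedge^{j_1}(A^{*})(\psi_{j_1})\wedge\wedge^{j_2}(A^{*})(\psi_{j_2})$, which by bilinearity of $\wedge$ and the eigenspace relations equals $(\lambda_1\psi_{j_1})\wedge(\lambda_2\psi_{j_2})=\lambda_1\lambda_2(\psi_{j_1}\wedge\psi_{j_2})$. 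Hence $\psi_{j_1}\wedge\psi_{j_2}$ belongs to the eigenspace $\wedge^{j_1+j_2}_K(V^{*})(\lambda_1\lambda_2)$ of $\wedge^{j_1+j_2}(A^{*})$ attached to $\lambda_1\lambda_2$.

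Since the $K$-linear map $\wedge^{j_1}_K(V^{*})(\lambda_1)\otimes_K\wedge^{j_2}_K(V^{*})(\lambda_2)\to\wedge^{j_1+j_2}_K(V^{*})$ is the bilinear extension of $(\psi_{j_1},\psi_{j_2})\mapsto\psi_{j_1}\wedge\psi_{j_2}$, its image is spanned over $K$ by such decomposable products, each of which we have just shown lies in $\wedge^{j_1+j_2}_K(V^{*})(\lambda_1\lambda_2)$. Therefore the entire image is contained in that eigenspace, which is precisely the claim. The hypothesis $j_1+j_2\le\dim(V)$ is only used to ensure that $\wedge^{j_1+j_2}_K(V^{*})$ is a nonzero target (otherwise the statement is vacuously true). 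There is no real obstacle in this proof: the lemma is essentially a restatement of the algebra endomorphism property of $\wedge(A^{*})$ phrased in eigenspace language.
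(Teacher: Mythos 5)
Your proof is correct and follows essentially the same route as the paper, which states the lemma as an immediate consequence of the multiplicativity identity \eqref{multipliWedge}; you have simply written out the short verification on decomposables and the bilinearity step that the paper leaves implicit.
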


\begin{rem}
\label{restrictionLambdaK}
The $K$-linear map in Lemma \ref{product EigenSpaces}
 is the restriction of $\Lambda_{j_1,j_2,K}$ defined in Subsection \ref{wedgeImage}.
\end{rem}

\begin{rem}
\label{eigenKEwedge}
Applying Remark \ref{eigenKE} to $\wedge^j (A^{*}) \colon \wedge_K^{j} (V^*) \to \wedge_K^{j} (V^*)$ (instead of $A \colon V \to V$), we obtain that if  $\lambda \in K\subset E$
and $\wedge_K^{j} (V^*)(\lambda)$ (resp. $\wedge^j(V_E^{*})(\lambda)$) is  the attached to $\lambda$  eigenspace of $\wedge^j (V^{*})$ (resp. of $\wedge^j(V_E^{*})$)
then the natural $E$-linear map
$$\wedge_K^{j} (V^*)(\lambda)\otimes_K E \to \wedge^j(V_E^{*})(\lambda)$$
induced by \eqref{baseEKchange} is an isomorphism. Combining this assertion with Lemma \ref{product EigenSpaces} applied twice (over $K$ and over $E$),
we get immediately the following assertion.
\end{rem}

\begin{lem}
\label{WedgeEigen} Let $j_1, j_2$ be positive integers such that $j_1+j_2 \le \dim(V)$.
Let $\lambda_1,\lambda_2 \in K \subset E$.
We keep the notation and assumptions of Lemma \ref{product EigenSpaces}. 

The $E$-linear map
$$\wedge^{j_1}_E (V_E^{*})(\lambda_1)\otimes_E \wedge^{j_2}_E (V_E^{*})(\lambda_2) \to \wedge_E^{j_1+j_2} (V^{*})(\lambda_1\lambda_2),  \ \psi_1\otimes\psi_1 \mapsto \psi_1 \wedge\psi_2$$
is not surjective if and only if the $K$-linear map
$$\wedge^{j_1}_K(V^{*})(\lambda_1)\otimes_K \wedge^{j_2}_K (V^{*})(\lambda_2) \to \wedge^{j_1+j_2}_K (V^{*})(\lambda_1\lambda_2),  \ \psi_1\otimes\psi_1 \mapsto \psi_1 \wedge\psi_2$$
is not surjective.   Here $\wedge^{j}_E (V_E^{*})(\lambda)\subset \wedge^{j}_E (V_E^{*})$ is the eigensubspace of $\wedge^j(A_E^{*})$ attached to $\lambda$.
\end{lem}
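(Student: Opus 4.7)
The proof will be a straightforward base-change argument once the identifications set up in the preceding remarks are invoked. My plan is to show that each side of the claimed map (both source and target) is obtained by applying $-\otimes_K E$ to the corresponding $K$-object, and that the map itself is the $E$-linear extension of the $K$-linear map. Since $E/K$ is a field extension, and hence a faithfully flat ring extension, surjectivity will be preserved and reflected, giving the equivalence.

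First, I would invoke Remark \ref{eigenKEwedge} three times, applied to exponents $j_1$, $j_2$, and $j_1+j_2$ (with eigenvalues $\lambda_1$, $\lambda_2$, and $\lambda_1\lambda_2$ respectively). This gives natural $E$-linear isomorphisms
\[
\wedge^{j_r}_K(V^*)(\lambda_r)\otimes_K E \;\cong\; \wedge^{j_r}_E(V_E^*)(\lambda_r) \quad (r=1,2),
\]
and
\[
\wedge^{j_1+j_2}_K(V^*)(\lambda_1\lambda_2)\otimes_K E \;\cong\; \wedge^{j_1+j_2}_E(V_E^*)(\lambda_1\lambda_2).
\]
In particular, the source of the $E$-linear map in the statement is canonically identified with the $E$-scalar extension of the source of the $K$-linear map, and likewise for the target.

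Next, I would apply the identity \eqref{wedgeUVKE} from Subsection \ref{wedgeImage} with
\[
U := \wedge^{j_1}_K(V^*)(\lambda_1), \qquad W := \wedge^{j_2}_K(V^*)(\lambda_2),
\]
and with $(i,j)=(j_1,j_2)$. By Remark \ref{restrictionLambdaK}, the $K$-linear map in the lemma is the restriction of $\Lambda_{j_1,j_2,K}$ to $U\otimes_K W$, and similarly its $E$-analogue is the restriction of $\Lambda_{j_1,j_2,E}$ to $U_E\otimes_E W_E$. Therefore the $E$-linear map of the lemma equals the $K$-linear map tensored with $E$, and its image is precisely $\Lambda_{j_1,j_2,K}(U\otimes_K W)\otimes_K E$ sitting inside $\wedge^{j_1+j_2}_K(V^*)(\lambda_1\lambda_2)\otimes_K E$.

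Finally, I would conclude using faithful flatness of $E$ as a $K$-module: if $\Phi\colon P\to Q$ is a $K$-linear map of $K$-vector spaces, then $\Phi$ is surjective if and only if $\Phi\otimes_K \mathrm{id}_E$ is surjective (equivalently, $Q/\mathrm{Image}(\Phi)$ vanishes iff $(Q/\mathrm{Image}(\Phi))\otimes_K E$ vanishes, which holds over any field extension). Applying this to the $K$-linear map of the lemma gives the desired equivalence. I do not foresee any real obstacle here: the only thing to verify carefully is that the identifications from Remark \ref{eigenKEwedge} intertwine the two multiplication maps, which is immediate from the functoriality of the base-change isomorphism for exterior powers and the compatibility \eqref{WEK} already recorded in the paper.
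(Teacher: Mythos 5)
Your proof is correct and is precisely the argument that the paper leaves implicit: the paper states that Lemma~\ref{WedgeEigen} follows immediately from Remark~\ref{eigenKEwedge} together with Lemma~\ref{product EigenSpaces} applied over $K$ and over $E$, and your write-up is exactly the expected combination of the eigenspace base-change isomorphism, the image-compatibility identity \eqref{wedgeUVKE}, and faithful flatness of $E/K$ to transfer surjectivity. Same approach, merely written out in full.
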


\begin{sect}{\bf Main construction.}
\label{multEigen}
We keep the notation of Remark \ref{eigenKEwedge}. Suppose that $A_E \colon  V_E \to V_E$ is {\sl diagonalizable},
$\mathrm{spec}(A)\subset E$ is the set of its {\sl eigenvalues}, and $\mult_A \colon \mathrm{spec}(A) \to \Z_{+}$ is the integer-valued function
that assigns to each eigenvalue of $A_E$ its multiplicity.  

Let $\lambda \in K$ and  $j \le \dim(V)$ be a positive integer.
%the number $N(j,a;A)$ of
Let us consider an integer-valued
function $e \colon \mathrm{spec}(A) \to \Z_{+}$ that enjoys the following properties.

\begin{itemize}
\item[(i)] $e(\alpha)\le \mult_A(\alpha) \ \forall \alpha \in \mathrm{spec}(A)$;

\item[(ii)] $\sum_{\alpha\in  \mathrm{spec}(A)} e(\alpha)=j$;

\item[(iii)] $\prod_{\alpha\in  \mathrm{spec}(A)}\alpha^{ e(\alpha)}=\lambda$.
\end{itemize}

Let us choose  an {\sl eigenbasis}  $B$ of $E$-vector space $V_E$ w.r.t. $A_E$ and let  
$$\pi \colon B \twoheadrightarrow \mathrm{spec}(A)$$ be
the surjective map that assigns to each eigenvector $x \in B$ the corresponding eigenvalue of $A_E$. Clearly,
for every eigenvalue $\alpha\in  \mathrm{spec}(A)$ the preimage $\pi^{-1}(\alpha)$ consists of $\mult_A(\alpha)$ elements of $B$. 
Let $$B^{*}=\{x^{*}\mid x\in B\}$$ be the basis of $V_E^{*}$ that is dual to $B$.
Let us choose an order on $B$ and define for each $j$-element subset $C\subset B$ an element
$$y_C:=\wedge_{x \in C} x^{*}\in \wedge^j(V_E^{*}).$$
Clearly, all $y_C$'s constitute an {\sl eigenbasis} of $\wedge^j(V_E^{*})$ w.r.t. $\wedge^j (A^{*})$. Actually,
%\begin{equation}
%\label{wedgeEigen}
$$\wedge^j (A^{*}) (y_C)= \left(\prod_{x\in C}\pi(x)\right) y_C.$$
%\end{equation}
Let us assign to $C$ the integer-valued function
\begin{equation}
\label{eCdef}
e=e_C \colon \mathrm{spec}(A) \to \Z_{+},   \ \alpha \mapsto \#(\{x \in C \mid \pi(x)=\alpha\}).
\end{equation}
Clearly, $y_C$ is an eigenvector of $\wedge^j (A^{*})$ with eigenvalue $\lambda$ if and only if $e_C$ enjoys the properties (i)-(iii).
This implies that the set of $y_C$'s such that $e_C$ satisfies (i)-(iii) is a $E$-basis of the eigenspace $\wedge^j(V_E^{*})(\lambda)$.

Conversely, suppose that $e \colon \mathrm{spec}(A) \to \Z_{+}$ is an integer-valued function that enjoys the properties (i)-(iii). I claim that
there exists a $j$-element subset $C\subset B$ such that $e=e_C$. Indeed,  let us choose a $e(\alpha)$-element subset
$C_{\alpha}\subset \pi^{-1}(\alpha)\subset B$ for all $\alpha \in \mathrm{spec}(A)$ with $e(\alpha)>0$. The property (i) guarantees that such a choice is possible
(but not necessarily unique). Now define $C$ as the (disjoint) union of all these $C_{\alpha}$'s. Property (ii) implies that $B$ is a $j$-element subset of $B$.
It follows from (iii) that $y_C \in \wedge^j(V_E^{*})(\lambda)$.

\end{sect}

The following assertion will be used in the proof of Theorem \ref{mainTate} (with $K=\Q_{\ell}, V=V_{\ell}(X^n), A=\Fr_{X^n}$).

\begin{prop}
\label{exoticKE}
We keep the notation and assumptions  of Subsection \ref{multEigen}, Remark \ref{eigenKE} and Lemma \ref{WedgeEigen}. In particular, $A_E \colon V_E \to V_E$
is diagonalizable.
Assume additionally that $A \colon V \to V$ is invertible,  $j_1=j-2$ and $j_2=2$.
Suppose that $\lambda_1$ and $\lambda_2$ are nonzero elements of $K$ and $j>2$, i.e., $j_1\ge 1$.
Then the following conditions are equivalent.
\begin{itemize}
\item[(a)]  The $K$-linear map
\begin{equation}
\label{imageWDGK}
\wedge_K^{j-2}(V^{*})(\lambda_1) \otimes_K \wedge_K^2(V^{*})(\lambda_2) \to \wedge_K^j(V^{*})(\lambda_1\lambda_2),  \psi\otimes \phi \mapsto \psi\wedge \phi
\end{equation}
is not surjective. 
\item[(b)]
There exists a function $e \colon \mathrm{spec}(A) \to \Z_{+}$ that enjoys the  following properties.

\begin{itemize}
\item[(i)] $e(\alpha)\le \mult_A(\alpha) \ \forall \alpha \in \mathrm{spec}(A)$;

\item[(ii)] $\sum_{\alpha\in  \mathrm{spec}(A)} e(\alpha)=j$;

\item[(iii)] $\prod_{\alpha\in  \mathrm{spec}(A)}\alpha^{ e(\alpha)}=\lambda_1\lambda_2$.

\item[(iv)]
If $\alpha \in \mathrm{spec}(A)$ and 
%either $e(\alpha) =0$ or 
$e(\alpha) \ne 0$ then 
$e(\alpha) \ge 1$ and one of the following conditions holds.

\begin{enumerate}
\item[(1)] $\lambda_2/\alpha \not\in  \mathrm{spec}(A)$;
\item[(2)]  $\lambda_2/\alpha \in  \mathrm{spec}(A)$ but $e(\lambda_2/\alpha)=0$.
\item[(3)]  $\alpha=\lambda_2/\alpha$ (i.e., $\alpha^2=\lambda_2$) and $e(\alpha)=1$.
\end{enumerate}
\end{itemize}
\end{itemize}

\end{prop}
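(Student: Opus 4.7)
\emph{Proof plan.} By Lemma~\ref{WedgeEigen}, the map \eqref{imageWDGK} fails to be surjective over $K$ if and only if its $E$-linear base change
$$
\wedge_E^{j-2}(V_E^{*})(\lambda_1)\otimes_E \wedge_E^{2}(V_E^{*})(\lambda_2)\to \wedge_E^{j}(V_E^{*})(\lambda_1\lambda_2),\ \psi\otimes\phi\mapsto\psi\wedge\phi
$$
fails to be surjective. So the proof reduces to the diagonalizable setting over $E$, which is exactly the framework of Subsection~\ref{multEigen}.

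Fix an eigenbasis $B$ of $V_E$ with respect to $A_E$, fix a linear order on $B$, and let $\pi\colon B\twoheadrightarrow\mathrm{spec}(A)$ send each eigenvector to its eigenvalue. For $C\subset B$ put $y_C:=\wedge_{x\in C}x^{*}$. By the analysis of Subsection~\ref{multEigen}, for each of the three pairs $(r,\lambda)\in\{(j-2,\lambda_1),(2,\lambda_2),(j,\lambda_1\lambda_2)\}$ the eigenspace $\wedge_E^{r}(V_E^{*})(\lambda)$ admits an $E$-basis consisting of those $y_C$ with $\#C=r$, with $\prod_{x\in C}\pi(x)=\lambda$, and with the associated function $e_C$ of \eqref{eCdef} satisfying $e_C\le \mult_A$ pointwise. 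Since $y_{C_1}\wedge y_{C_2}=\pm y_{C_1\cup C_2}$ when $C_1\cap C_2=\emptyset$ and vanishes otherwise, and since the invertibility of $A$ ensures all $\pi(x)$ are nonzero (so $\prod_{x\in C_1}\pi(x)=\lambda_1$ is automatic once the total product is $\lambda_1\lambda_2$ and $\prod_{x\in C_2}\pi(x)=\lambda_2$), the image of the multiplication map is the $E$-span of those basis vectors $y_C$ of $\wedge_E^{j}(V_E^{*})(\lambda_1\lambda_2)$ such that $C$ admits a $2$-element subset $C_2\subset C$ with $\prod_{x\in C_2}\pi(x)=\lambda_2$. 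Hence non-surjectivity over $E$ is equivalent to the existence of a $j$-element $C\subset B$ with $e_C\le \mult_A$, $\prod_{x\in C}\pi(x)=\lambda_1\lambda_2$, and no such $2$-subset.

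The heart of the argument is the translation of the ``no bad pair'' condition on $C$ into condition (iv) on $e_C$. Suppose $\alpha\in\mathrm{spec}(A)$ has $e_C(\alpha)\ge 1$. If $\beta:=\lambda_2/\alpha$ is not in $\mathrm{spec}(A)$, alternative (1) holds trivially. If $\beta\in\mathrm{spec}(A)$ and $\beta\ne\alpha$, then $e_C(\beta)\ge 1$ would let us pick distinct $x_1\in\pi^{-1}(\alpha)\cap C$ and $x_2\in\pi^{-1}(\beta)\cap C$ with $\pi(x_1)\pi(x_2)=\lambda_2$, violating the assumption; so $e_C(\beta)=0$, which is alternative (2). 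Finally, if $\beta=\alpha$, i.e.\ $\alpha^2=\lambda_2$, then $e_C(\alpha)\ge 2$ would yield two distinct elements of $\pi^{-1}(\alpha)\cap C$ whose $\pi$-product equals $\lambda_2$; so $e_C(\alpha)=1$, which is alternative (3). The converse implications read off immediately.

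Conversely, starting from any $e\colon\mathrm{spec}(A)\to\Z_{+}$ satisfying (i)--(iv), the last paragraph of Subsection~\ref{multEigen} produces a $j$-element $C\subset B$ with $e_C=e$, and by the translation just given $C$ has no $2$-element subset of $\pi$-product $\lambda_2$, so $y_C$ lies outside the image. This establishes the equivalence of (a) and (b). The only nontrivial step is the case analysis above; the surrounding framework is already packaged in Lemma~\ref{WedgeEigen} and Subsection~\ref{multEigen}.
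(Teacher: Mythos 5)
Your proposal is correct and follows essentially the same approach as the paper: reduce to the diagonalizable case over $E$ via Lemma \ref{WedgeEigen}, use the eigenbasis $\{y_C\}$ of Subsection \ref{multEigen}, observe that $y_S\wedge y_T$ vanishes unless $S\cap T=\emptyset$ (and then equals $\pm y_{S\cup T}$), and translate ``no $2$-element subset of $C$ has $\pi$-product $\lambda_2$'' into condition (iv) on $e_C$. The one stylistic difference is that you describe the image as the span of exactly those basis vectors $y_C$ admitting a bad pair and then read off non-surjectivity in one step, while the paper argues the two implications separately (using a minimal-presentation argument and the explicit Lemma \ref{CversusST} for the direction (a) $\Rightarrow$ (b)); your version is a bit more direct but logically the same.
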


\begin{rem}
The invertibility of $A$ means that $0\not\in \mathrm{spec}(A)$.
\end{rem}

\begin{rem}
\label{KESUR}
In light  of Lemma \ref{WedgeEigen}, it suffices to check that condition (b) is equivalent (in the obvious notation) to the {\sl non-surjectiveness} of  the $E$-linear map 
\begin{equation}
\label{imageWDG}
\wedge_E^{j-2}(V_E^{*})(\lambda_1) \otimes_E \wedge_E^2(V_E^{*})(\lambda_2) \to \wedge^j(V^{*})(\lambda_1\lambda_2),  \psi\otimes \phi \mapsto \psi\wedge \phi.
\end{equation}
\end{rem}

\begin{proof}[Proof of Proposition \ref{exoticKE}]
We  start with the following lemma that describes the image of map \eqref{imageWDG}.

\begin{lem}
\label{imageWedgePr}
The image of the map \eqref{imageWDG} is generated by all $y_C$'s where $C$ is any $j$-element subset of $B$ that enjoys the following properties.

The set $C$ is a disjoint union of a $(j-2)$-element subset $S$ and a $2$-element subset $T$ such that the corresponding functions
$$e_S \colon \mathrm{spec}(A) \to \Z_{+}, \quad e_T \colon \mathrm{spec}(A) \to \Z_{+}$$
defined as in  \eqref{eCdef} enjoy the following properties.
\begin{equation}
\label{STC}
\prod_{\alpha\in  \mathrm{spec}(A)}\alpha^{ e_S(\alpha)}=\lambda_1, \ \prod_{\alpha\in  \mathrm{spec}(A)}\alpha^{ e_T(\alpha)}=\lambda_2.
\end{equation}
\end{lem}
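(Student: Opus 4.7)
The plan is to exploit the explicit eigenbasis of wedge powers constructed in Subsection \ref{multEigen}, applied separately to the two tensor factors.

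First I would recall from Subsection \ref{multEigen} that the eigenspace $\wedge_E^{j-2}(V_E^{*})(\lambda_1)$ admits as an $E$-basis the collection of $y_S = \bigwedge_{x \in S}x^{*}$ where $S$ ranges over those $(j-2)$-element subsets of the eigenbasis $B$ whose associated function $e_S$ (defined by \eqref{eCdef}) satisfies conditions (i)--(iii) with $j$ replaced by $j-2$ and $\lambda$ replaced by $\lambda_1$; equivalently, $\prod_{\alpha}\alpha^{e_S(\alpha)}=\lambda_1$. In the same way, $\wedge_E^{2}(V_E^{*})(\lambda_2)$ has $E$-basis $\{y_T\}$ indexed by $2$-element subsets $T \subset B$ with $\prod_{\alpha}\alpha^{e_T(\alpha)}=\lambda_2$. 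Consequently, the tensor product $\wedge_E^{j-2}(V_E^{*})(\lambda_1)\otimes_E \wedge_E^{2}(V_E^{*})(\lambda_2)$ has $E$-basis $\{y_S \otimes y_T\}$ running over all such pairs $(S,T)$.

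Next I would compute the image of each basis vector under the wedge map: by definition $y_S \otimes y_T \mapsto y_S \wedge y_T$, and since the $x^{*}$ for distinct $x \in B$ pairwise anticommute in $\wedge(V_E^{*})$, one has $y_S \wedge y_T = 0$ whenever $S \cap T \neq \emptyset$, while $y_S \wedge y_T = \pm\, y_{S \sqcup T}$ whenever $S$ and $T$ are disjoint. Hence the image of \eqref{imageWDG} is the $E$-span of the vectors $y_C$ with $C = S \sqcup T$, where $S,T$ are disjoint subsets of $B$ of sizes $j-2$ and $2$ satisfying the eigenvalue conditions \eqref{STC}. This is precisely the description in the statement.

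Finally I would note that the translation between ``$\prod_{x \in S}\pi(x)=\lambda_1$'' and ``$\prod_{\alpha}\alpha^{e_S(\alpha)}=\lambda_1$'' is immediate from \eqref{eCdef}, so the two formulations of the condition coincide. There is no serious obstacle: the only step requiring a small check is the vanishing/nonvanishing dichotomy for $y_S \wedge y_T$, which is just the standard alternating property of the exterior algebra, and the fact that the disjoint-union $y_{S \sqcup T}$ really does appear (up to sign) among the eigenbasis elements of $\wedge_E^{j}(V_E^{*})$, which is also built into the construction of Subsection \ref{multEigen}.
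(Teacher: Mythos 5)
Your argument is correct and follows essentially the same path as the paper's proof: identify the eigenbasis of each tensor factor via Subsection \ref{multEigen}, observe that the image of \eqref{imageWDG} is spanned by the wedge products $y_S \wedge y_T$, and use the alternating property to conclude that these vanish when $S \cap T \ne \emptyset$ and equal $\pm y_{S \sqcup T}$ otherwise. The only cosmetic difference is that you make the basis $\{y_S \otimes y_T\}$ of the tensor product explicit, which the paper leaves implicit.
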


\begin{proof}[Proof of Lemma \ref{imageWedgePr}]
It follows from arguments of Subsection \ref{multEigen} that all the $y_S$'s  (resp. all the $y_T$'s)  where $S$ is any $(j-2)$-element subset of $B$ 
(resp. where $T$ is any $2$-element subset of $B$)  that satisfies \eqref{STC} constitute a basis of $\wedge_E^{j-2}(V_E^{*})(\lambda_1)$
(resp. a basis of $\wedge_E^2(V_E^{*})(\lambda_2)$). This implies that the image of map \eqref{imageWDG} is generated by all $y_S\wedge y_T$.
If $S$ meets $T$ then it follows from the
very definition of $y_S$ and $y_T$  and basic properties of wedge products that 
$y_S \wedge y_T=0$. On the other hand, if $S$ does {\sl not} meet $T$ then $C:=S\cup T=S \sqcup T$ is a $j$-element subset of $B$ and $y_S \wedge y_T=\pm y_{C}$.
This ends the proof.
\end{proof}

Now let us start to prove Proposition \ref{exoticKE}.
Suppose that (b) holds. In light of Remark \ref{KESUR}, it suffices to check that map \eqref{imageWDG} is {\sl not} surjective.
  To this end, choose an  an  eigenbasis $B$    of  $V_E$ w.r.t.  $A_E$, and choose an order on $B$.
Using arguments of Subsection \ref{multEigen}, choose a $j$-element subset $\tilde{C} \subset B$ such that the function
$$e_{\tilde{C}} \colon \mathrm{spec}(A) \to \Z_{+}$$ coincides with $e$ and therefore
enjoys  properties (i)-(iv). Then
$$y_{\tilde{C}} \in \wedge^j(V^{*})(\lambda_1\lambda_2).$$

 I claim that $y_{\tilde{C}}$ does {\sl not} lie in the image of map  \eqref{imageWDG}.
Indeed, $\wedge_E^{j-2}(V_E^{*})(\lambda_1)$ is generated as the $E$-vector space by elements of the form $y_S$, where $S$ are  $(j-2)$-element subsets of $B$ such that
$\prod_{b\in S}\pi(b)=\lambda_1$.
On the other hand, $ \wedge_E^2(V_E^{*})(\lambda_2)$ is generated as the $E$-vector space by elements of the form $y_T$, where $T$ are  $2$-element subsets of $B$ such that
$\prod_{b\in T}\pi(b)=\lambda_2$. This implies that the image of map \eqref{imageWDG}  is generated as the $E$-vector space by all $y_B \wedge y_T$. If $S$ meets $T$ then 
%it follows from the
%very definition of $y_S$ and $y_T$  and basic properties of wedge products that 
(as we have already seen)
$y_S \wedge y_T=0$. If $S$ does {\sl not} meet $T$ then $S\cup T=S \sqcup T$ is a $j$-element subset of $B$ and $y_S \wedge y_T=\pm y_{S\cup T}$.

\begin{lem}
\label{CversusST}
 The $j$-element  $\tilde{C}$ does {\sl not} coincide with any of $S\cup T$. 
\end{lem}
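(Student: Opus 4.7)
The plan is to suppose for contradiction that $\tilde{C}=S\sqcup T$ for some $(j-2)$-element subset $S$ and $2$-element subset $T$ of $B$ with $\prod_{b\in S}\pi(b)=\lambda_1$ and $\prod_{b\in T}\pi(b)=\lambda_2$, and then to derive a violation of property (iv) in the definition of the function $e=e_{\tilde C}$. Write $T=\{x,y\}$, so $\pi(x)\pi(y)=\lambda_2$, and let $\alpha_x=\pi(x)$, $\alpha_y=\pi(y)$. Since $x,y\in\tilde{C}$, both $e(\alpha_x)\ge 1$ and $e(\alpha_y)\ge 1$, so property (iv) must apply to the eigenvalue $\alpha_x$.

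I would then split into two cases according to whether $\alpha_x\ne\alpha_y$ or $\alpha_x=\alpha_y$. In the first case, set $\alpha=\alpha_x$; then $\lambda_2/\alpha=\alpha_y$ lies in $\mathrm{spec}(A)$ (ruling out (1)), satisfies $e(\lambda_2/\alpha)=e(\alpha_y)\ge 1>0$ (ruling out (2)), and $\alpha\ne\lambda_2/\alpha$ (ruling out (3)). In the second case, $\alpha_x=\alpha_y=\alpha$ with $\alpha^2=\lambda_2$, so $\alpha=\lambda_2/\alpha$; but now $e_{\tilde C}(\alpha)\ge 2$ because both distinct basis vectors $x$ and $y$ lie in the preimage $\pi^{-1}(\alpha)\cap\tilde C$, contradicting the requirement $e(\alpha)=1$ in condition (3) (and conditions (1) and (2) evidently fail as well).

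In either case property (iv) is violated, which contradicts the construction of $\tilde C$ in Subsection~\ref{multEigen} from the function $e$ satisfying (b). The main (and really only) point to be careful about is the bookkeeping in the second case: one must remember that $e_{\tilde C}(\alpha)$ counts the multiplicity with which $\alpha$ appears as $\pi(b)$ for $b\in\tilde C$, so having two distinct elements $x,y\in\tilde C$ with $\pi(x)=\pi(y)=\alpha$ genuinely forces $e_{\tilde C}(\alpha)\ge 2$. No other subtlety is expected.
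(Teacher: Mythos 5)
Your proof is correct and takes essentially the same approach as the paper: both assume $\tilde C=S\sqcup T$, write $T=\{x,y\}$, observe $e(\pi(x))\ge 1$ and $e(\pi(y))\ge 1$, and then split on whether $\pi(x)=\pi(y)$ to contradict property (iv) — in the unequal case via subcase (2) and in the equal case via the multiplicity count $e_{\tilde C}(\alpha)\ge 2$ against subcase (3). The only difference is that you explicitly check all three subcases of (iv) rather than just citing the violated one.
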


 \begin{proof}[Proof of Lemma \ref{CversusST}] Suppose $\tilde{C}=S\cup T$. This implies that $\tilde{C}$ contains a subset $T$ that consists of two distinct elements say $x_1,x_2\subset B$
with $\pi(x_1)\pi(x_2)=\lambda_2$. So,  $\tilde{C}$ contains these $x_1,x_2$. It follows from the definition of $e_{\tilde{C}}$ \eqref{eCdef} that if we put
$$\alpha_1=\pi(x_1), \ \alpha_2=\pi(x_2)$$
then $\alpha_1,\alpha_2 \in \mathrm{spec}(A)$ and
$$\alpha_1 \alpha_2=\lambda_2, \ e(\alpha_1)\ge 1, e(\alpha_2)\ge 1, \ e(\alpha_1)+ e(\alpha_2) \ge 2.$$
If $\alpha_1 \ne \alpha_2=\lambda_2/\alpha_1$ then  $e(\alpha_1)\ge 1, e(\lambda_2/\alpha_1)\ge 1$, which violates property (iv).
If $\alpha_1=\alpha_2$ then $\alpha_2=\lambda_2/\alpha_1$. It follows that $\alpha_1=\pi(x_1)=\pi(x_2)$ and therefore $e(\alpha_1)\ge 2$, which also contradicts property (iv).  This ends the proof.
\end{proof}

{\sl End of Proof of Proposition \ref{exoticKE}}. %(modulo Lemma \ref{CversusST}).
Taking into account that the set of all $y_C$'s where $C$ runs through all $j$-element subsets of $B$
is {\sl linearly independent}, we conclude
 that $y_{\tilde{C}}$ cannot be presented as a $E$-linear combination of $ y_{S\cup T}$'s and therefore does {\sl not} lie in the image of  map \eqref{imageWDG}. Hence, (a) holds. We proved that (b) implies (a).

Conversely, suppose that (a) holds, i.e., the map \eqref{imageWDG} is {\sl  not} surjective.
Then there is a {\sl basis vector} $y_C \in  \wedge^j(V^{*})(\lambda_1\lambda_2)$ that does {\sl not} belong to the image where $C$ is a certain $j$-element subset of $B$ such that 
$$\prod_{x\in C}\pi(x)=\lambda_1\lambda_2.$$
I claim that the function $e:=e_C$ enjoys the properties (b). By definition, $e_C$ enjoys the propetries (bi), (bii), (biii). In particular,
$$\prod_{\alpha\in C}\alpha^{e_C(\alpha)}=\prod_{x \in C}\pi(x)=\lambda_1 \lambda_2.$$

Suppose that $e_C$ does {\sl not} enjoy  the property (biv).  This means that there exists $\alpha\in \mathrm{spec}(A)$ such that
$e_C(\alpha) \ne 0$ (i.e., $\alpha \in \pi(C)$) such that
$$\frac{\lambda_2}{\alpha} \in \mathrm{spec}(A), \ e_C\left(\frac{\lambda_2}{\alpha}\right)\ne 0$$
(i.e., 
$$\lambda_2/\alpha\in \pi(C) \subset  \mathrm{spec}(A).)$$Iin addition, if $\lambda_2/\alpha=\alpha$ then $e_C(\alpha)\ge 2$ (i.e.,
$\pi^{-1}(\alpha)$ contains at least two elements of $C$).

This implies that there are {\sl distinct} elements $x_1$ and $x_2$ of $C$ such that
$$\pi(x_1)=\alpha, \ \pi(x_2)=\frac{\lambda_2}{\alpha}.$$
Then $C$ coincides with the disjoint union of the $2$-element subset $T=\{x_1,x_2\}$ and the $(j-2)$-element subset $S=C \setminus T$. By definition of $T$,
$$\prod_{\alpha\in T}\alpha^{e_T(\alpha)}=\prod_{x \in T}\pi(x)=\pi(x_1) \cdot \pi(x_2)=\alpha \cdot \frac{\lambda_2}{\alpha}=\lambda_2.$$
Hence,
$$\prod_{\alpha\in T}\alpha^{e_T(\alpha)}=\lambda_2.$$
Since $C$ is the disjoint union of $S$ and $T$, we get
$$\prod_{\alpha\in S}\alpha^{e_S(\alpha)}=\frac{\prod_{\alpha\in C}\alpha^{e_C(\alpha)}}{\prod_{\alpha\in T}\alpha^{e_T(\alpha)}}=\frac{\lambda_1\lambda_2}{\lambda_2}=\lambda_1.$$
%This implies that
Hence,
$$\prod_{\alpha\in S}\alpha^{e_S(\alpha)}=\lambda_1.$$
It follows from Lemma \ref{imageWedgePr} that $y_C$ lies in the image of the map \eqref{imageWDG}, which is not true. The obtained contradiction implies that $e_C$ enjoys the property (biv).

This ends the proof of of Proposition \ref{exoticKE}.
\end{proof}

The following assertion will be used in the proof of Theorem \ref{semigroupTate} (with $K=\Q_{\ell}, V=V_{\ell}(X^n), A=\Fr_{X^n}$).

\begin{prop}
\label{lastEff}
We keep the notation and assumption of Subsection \ref{multEigen}.
Assume additionally that $\fchar(K)=0$,  $\dim_K(V)$ is even, and $A:V \to V$ is invertible. Let $q$ be a nonzero element of $K$ that is not a root of unity.
Let $h$ and $m$ be  positive integers that  enjoy the following properties.

\begin{itemize}
\item[(i)]  $h<m \le \dim_K(V)/2$.

%Let $m>h$ be an integer that is strictly less than $\dim_K(V)/2$ and enjoys the following properties.
\item[(ii)] 
If $e \colon \mathrm{spec}(A) \to \Z_{+}$ is any nonnegative integer-valued function such that
$$\sum_{\alpha\in  \mathrm{spec}(A)} e(\alpha)=2m, \ \prod_{\alpha\in  \mathrm{spec}(A)}\alpha^{e(\alpha)}=q^m$$
then there exist positive integers $j_1,j_2$ and  nonnegative integer-valued functions
$$f_1 \colon \mathrm{spec}(A) \to \Z_{+},  f_2 \colon \mathrm{spec}(A) \to \Z_{+}$$
such that
$$m=j_1+j_2,   \  j_2 \le h;$$
$$   e(\alpha)=f_1(\alpha)+f_2(\alpha)\ \forall \alpha \in \mathrm{spec}(A);$$
$$\sum_{\alpha\in  \mathrm{spec}(A)} f_1(\alpha)=2j_1, \ \prod_{\alpha\in  \mathrm{spec}(A)}\alpha^{f_1(\alpha)}=q^{j_1},$$
$$\sum_{\alpha\in  \mathrm{spec}(A)} f_2(\alpha)=2j_2, \ \prod_{\alpha\in  \mathrm{spec}(A)}\alpha^{f_2(\alpha)}=q^{j_2}.$$
\end{itemize}

Then 
\begin{equation}
\label{imageWdgePowerQ}
\sum_{j=1}^h \Lambda_{2(m-j),2j,K}\left(\wedge_K^{2(m-j)}(V^{*})(q^{m-j}) \otimes_K \wedge_K^{2j}(V^{*})(q^j)\right)=\wedge_K^{2m}(V^{*})(q^m).
\end{equation}
\end{prop}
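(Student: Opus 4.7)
The plan is to reduce to the diagonalizable setting over $E$, where Subsection \ref{multEigen} gives an explicit basis of each eigenspace, and then to exhibit each basis vector of the target eigenspace as a single wedge product coming from one summand of the left-hand side of \eqref{imageWdgePowerQ}.

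By Lemma \ref{product EigenSpaces} the left-hand side is already contained in $\wedge_K^{2m}(V^{*})(q^m)$, so only the reverse inclusion needs work. Extending scalars from $K$ to $E$ via \eqref{wedgeUWrKE} and Remark \ref{eigenKEwedge} converts both sides of \eqref{imageWdgePowerQ} into their $E$-analogues, and since base change $(-)\otimes_K E$ is faithfully flat it suffices to verify the equality over $E$. Because $A_E$ is diagonalizable, the main construction of Subsection \ref{multEigen} realizes each eigenspace $\wedge_E^{2j}(V_E^{*})(q^j)$ as the $E$-span of the linearly independent vectors $y_C$ indexed by those $2j$-element subsets $C\subset B$ (for a fixed eigenbasis $B$ of $V_E$) whose associated function $e_C$ of \eqref{eCdef} has weight $2j$ and product $q^j$.

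It therefore remains to express each basis vector $y_C$ of $\wedge_E^{2m}(V_E^{*})(q^m)$ in the form $y_S\wedge y_T$ with $S,T$ disjoint subsets of $B$ satisfying $|S|=2j_1$, $|T|=2j_2$, $j_1+j_2=m$, $1\le j_2\le h$, and such that $e_S$ has product $q^{j_1}$ while $e_T$ has product $q^{j_2}$. Applied to $e:=e_C$, hypothesis (ii) delivers exactly such integers $j_1,j_2$ together with a splitting $e_C=f_1+f_2$ into nonnegative integer-valued functions of the required weights and product values. Since $f_2(\alpha)\le e_C(\alpha)$ for every $\alpha\in \mathrm{spec}(A)$, one selects any $f_2(\alpha)$ of the $e_C(\alpha)$ elements of $\pi^{-1}(\alpha)\cap C$ to form $T_\alpha$ and lets $S_\alpha$ be the remaining $f_1(\alpha)$ elements; setting $T:=\bigsqcup_\alpha T_\alpha$ and $S:=\bigsqcup_\alpha S_\alpha$ gives a disjoint partition $C=S\sqcup T$ with $e_S=f_1$ and $e_T=f_2$, so $y_S\in\wedge_E^{2j_1}(V_E^{*})(q^{j_1})$, $y_T\in\wedge_E^{2j_2}(V_E^{*})(q^{j_2})$, and $y_S\wedge y_T=\pm y_C$ lies in the $j=j_2$ summand on the left-hand side.

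The principal technical point is the descent from $E$ back to $K$, which is handled cleanly by \eqref{wedgeUWrKE} together with faithful flatness; the combinatorial heart of the argument is the realization of the abstract splitting $e_C=f_1+f_2$ by a genuine set-theoretic partition of $C$, and hypothesis (ii) is precisely the input that makes this possible. No step requires more than the explicit eigenbasis description of Subsection \ref{multEigen}, so beyond correctly arranging the reduction to $E$ and invoking hypothesis (ii), the proof is essentially bookkeeping.
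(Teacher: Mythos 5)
Your proof is correct and follows essentially the same route as the paper: reduce to the diagonalizable situation over $E$ via the base-change compatibilities of Subsection \ref{wedgeImage} and Remark \ref{eigenKEwedge}, then for each basis vector $y_C$ of $\wedge_E^{2m}(V_E^{*})(q^m)$ use hypothesis (ii) to split $e_C=f_1+f_2$ and realize that splitting by a set-theoretic partition $C=C_1\sqcup C_2$, so that $y_C=\pm y_{C_1}\wedge y_{C_2}$ lies in the $j=j_2$ summand. The only cosmetic difference is that you invoke faithful flatness to descend from $E$ to $K$, whereas the paper cites Remark \ref{restrictionLambdaK} together with \eqref{wedgeUWrKE} for the same purpose.
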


\begin{proof}
In light of Remark \ref{restrictionLambdaK}  and arguments of Subsection \ref{wedgeImage}, it suffices to check that

\begin{equation}
\label{imageWdgePowerQE}
\sum_{j=1}^h \Lambda_{2(m-j),2j,E}\left(\wedge_E^{2(m-j)}(V_E^{*})(q^{m-j}) \otimes_E \wedge_E^{2j}(V_E^{*})(q^j)\right)=\wedge_E^{2m}(V_E^{*})(q^m).
\end{equation}
Recall that (in the notation of  Subsection \ref{multEigen}) that $B$ is an (ordered) eigenbasis of $V_E$ and
 all the $2m$-element subsets $C\subset \mathrm{spec}(A)$
with $\prod_{\alpha\in C}\alpha=q^m$ 
give rise to the base $\{y_C=\wedge_{x\in C}x^{*}\}$ of $\wedge_E^{2m}(V_E^{*})(q^m)$.
So, it suffices to prove that each such $y_C$ lies in one of the summands in LHS of \eqref{imageWdgePowerQE}.
To this end, let us consider the nonnegative integer-valued function
$$e_C \colon \mathrm{spec}(A)\to \Z_{+}, \ 
e(\alpha)=\#(C(\alpha)) \ \text{ where }\ C(\alpha):=\{x\in C \subset B\mid \pi(x)=\alpha\}.$$
(see \eqref{eCdef}). Clearly,
$$\sum_{\alpha\in \mathrm{spec}(A)}e_C(\alpha)=\#(C)=2m, \  \prod_{\alpha\in \mathrm{spec}(A)}\alpha^{e_C(\alpha)}=\prod_{\alpha\in C}\alpha =q^m.$$
By property (ii),  there exist positive integers $j_1,j_2$ and  nonnegative integer-valued functions
$$f_1 \colon \mathrm{spec}(A) \to \Z_{+},  f_2 \colon \mathrm{spec}(A) \to \Z_{+}$$
such that
$$m=j_1+j_2,   \  j_2 \le h;$$
$$   e(\alpha)=f_1(\alpha)+f_2(\alpha)\  \forall \alpha \in \mathrm{spec}(A);$$
$$\sum_{\alpha\in  \mathrm{spec}(A)} f_1(\alpha)=2j_1, \ \prod_{\alpha\in  \mathrm{spec}(A)}\alpha^{f_1(\alpha)}=q^{j_1},$$
$$\sum_{\alpha\in  \mathrm{spec}(A)} f_2(\alpha)=2j_2, \ \prod_{\alpha\in  \mathrm{spec}(A)}\alpha^{f_2(\alpha)}=q^{j_2}.$$
Let us partition each $C(\alpha)$ into a {\sl disjoint union} of two sets
$$C(\alpha)=C(\alpha)_1\cup C(\alpha)_2   \text{ with } \ C(\alpha)_1\cap C(\alpha)_2 =\emptyset, \ \#(C(\alpha)_1)=f_1(\alpha),  \#(C(\alpha)_2)=f_2(\alpha)$$
and define $C_1$ (resp. $C_2)$ as the (disjoint) union of all $C(\alpha_1)$ (resp. of all $C(\alpha_2)$). Then $C$ becomes a {\sl disjoint union} of $C_1$ and $C_2$, and
$$f_1=e_{C_1},  f_2=e_{C_2}.$$
It follows that
$$\sum_{\alpha\in  \mathrm{spec}(A)} e_{C_1}(\alpha)=2j_1, \ \prod_{\alpha\in  \mathrm{spec}(A)}\alpha^{e_{C_1}(\alpha)}=q^{j_1},$$
$$\sum_{\alpha\in  \mathrm{spec}(A)} e_{C_2}(\alpha)=2j_2, \ \prod_{\alpha\in  \mathrm{spec}(A)}\alpha^{e_{C_2}(\alpha)}=q^{j_2}.$$

This implies that  
$$y_{C_1} \in \wedge_E^{2(j_1)}(V_E^{*})(q^{j_1})=\wedge_E^{2(m-j_2)}(V_E^{*})(q^{m-j_2}),
y_{C_2} \in \wedge_E^{2(j_2)}(V_E^{*})(q^{j_2}).$$
Since $C$ is a disjoint union of $C_1$ and $C_2$,
$$y_C =\pm y_{C_1}\wedge y_{C_2}\in
\Lambda_{2(m-j_2),2j_2,E}\left(\wedge_E^{2(m-j_2)}(V_E^{*})(q^{m-j_2}) \otimes_E \wedge_E^{2j_2}(V_E^{*})(q^{j_2})\right).$$
In order to finish the proof, one has only to recall that $j \le h_2$.
\end{proof}

\section{Tate forms}
\label{TateProof}
\begin{sect}
Recall that $X$ is an  abelian variety of positive dimension $g$ over a finite field $k$ with $\fchar(k)=p$ and  $\#(k)=q$.
Let $\ell\ne p$ be a prime and $T_{\ell}(X)$ the $\ell$-adic Tate module of $X$.  Let us consider the corresponding $\Q_{\ell}$-vector space
$$V_{\ell}(X)=T_{\ell}(X)\otimes_{\Z_{\ell}}\Q_{\ell},$$
which is a $2g$-dimensional vector space over $\Q_{\ell}$. The action of $\Fr_X$ extends by $\Q_{\ell}$-linearity to $V_{\ell}(X)$. So, we may view
$\Fr_X$ as a $\Q_{\ell}$-linear automorphism of $V_{\ell}(X)$, whose characteristic polynomial coincides with $\mathcal{P}_X(t)$. A theorem of
Weil \cite{Mumford,Tate1} asserts that $\Fr_X$ acts as a {\sl semisimple} linear operator in $V_{\ell}(X)$. 
Let $\bar{\Q}_{\ell}$ be an algebraic closure of $\Q_{\ell}$.  Let us choose a field embedding 
$$L_X =\Q(R_X)\hookrightarrow \bar{\Q}_{\ell}.$$
Further we will identify $L_X$ with its image in $\bar{\Q}_{\ell}$. We have
$$R_X \subset L_X \subset \bar{\Q}_{\ell}.$$
Let us consider the $2\dim(X)$-dimensional $\bar{\Q}_{\ell}$-vector space
$$\bar{V}_{\ell}(X):=V_{\ell}(X)\otimes_{\Q_{\ell}}\bar{\Q}_{\ell}.$$
Extending the action of $\Fr_X$ by $\bar{\Q}_{\ell}$-linearity, we get a $\bar{\Q}_{\ell}$-linear operator
$$\overline{\Fr}_X \colon \bar{V}_{\ell}(X)\to \bar{V}_{\ell}(X), \ v\otimes \lambda\mapsto \Fr_X(v)\otimes \lambda \ \forall v \in V_{\ell}(X), \lambda \in \bar{\Q}_{\ell}.$$
In the notation of Section \ref{linAlgebra}, let us put
%$$\kappa: \Q(R_X)=L_X \hookrightarrow \bar{Q}_{\ell}.$$
\begin{equation}
\label{notationQl}
K=\Q_{\ell},  V=V_{\ell}(X),  A=\Fr_X \colon V_{\ell}(X)\to V_{\ell}(X),    E=\bar{\Q}_{\ell}.
\end{equation}
Then
\begin{equation}
\label{notationQlbar}
V_E=\bar{V}_{\ell}(X), A_E=\overline{\Fr}_X; \ \mathrm{spec}(A)=R_X,  \mult_A=\mult_X:R_X \to \Z_{+}.
\end{equation}
\end{sect}

\begin{rem}
\label{YXm} If $m$ is a positive integer and $Y=X^m$ then it is well known that there is a canonical isomorphism of $\Q_{\ell}$-vector spaces
$$V_{\ell}(Y)=\oplus_{i=1}^m V_{\ell}(X)$$ such that $\Fr_Y$ acts on $V_{\ell}(Y)$ as
$$\Fr_Y(x_1,\dots x_m)=(\Fr_X x_1, \dots, \Fr_X x_m) \ \forall (x_1,\dots x_m)\in \oplus_{i=1}^m V_{\ell}(X)=V_{\ell}(Y).$$
This implies that
\begin{equation}
\label{RXY}
\mathcal{P}_Y(t)=\mathcal{P}_X(t)^m, \ R_Y=R_X,  L_X=L_Y, \mult_Y(\alpha)=m \cdot \mult_X(\alpha) \ \forall \alpha\in R_X=R_Y.
\end{equation}
In particular,
\begin{equation}
\label{multY}
\mult_Y(\alpha) \ge m \ \forall \alpha \in R_Y=R_X.
\end{equation}

\end{rem}
Any invertible sheaf/divisor class $\mathcal{L}$ on $X$
gives rise to (defined up to multiplication by an element of $\Q_{\ell}^{*}$) a $\Q_{\ell}$-bilinear alternating {\sl Riemann form}  (the first $\ell$-adic Chern class of  $\mathcal{L}$) \cite{Mumford}
$$\phi=\phi_{\mathcal{L}} \colon V_{\ell}(X) \times V_{\ell}(X) \to \Q_{\ell} $$
 such that 
 \begin{equation}
\label{Riemann}
\phi(\Fr_X (x),\Fr_X(y))=q \cdot \phi(x,y) \ \forall x,y \in  V_{\ell}(X).
\end{equation}

\begin{sect}
A theorem of Tate \cite{Tate1} asserts that every alternating $\Q_{\ell}$-bilinear form $\phi$ on $V_{\ell}(X)$ that satisfies \eqref{Riemann} is a $\Q_{\ell}$-linear combination
of forms of type $\phi_{\mathcal{L}}$. We call such a form an $\ell$-adic  Tate form of degree $2$
 and denote by $\tate_2(X,\ell)$ the subspace of all such forms in
$\Hom_{\Q_{\ell}}(\Lambda^2 V_{\ell}(X),\Q_{\ell})$. In other words,
$$\tate_2(X,\ell):=\{\phi \in \Hom_{\Q_{\ell}}(\Lambda^2 V_{\ell}(X),\Q_{\ell})\mid 
\phi(\Fr_X (x),\Fr_X(y))=q \cdot \phi(x,y)$$
$$  \forall x,y \in  V_{\ell}(X)\}.$$
More generally, let us define for each nonnegative integer $d\le \dim(X)=g$ the subspace $\tate_{2d}(X,\ell)$ of all alternating $2d$-forms
$\psi \in \Hom_{\Q_{\ell}}(\Lambda^{2d} V_{\ell}(X),\Q_{\ell})$
such that
$$\psi(\Fr_X (v_1), \dots, \Fr_X(v_{2d}))=q^d \cdot \psi(v_1,\dots, v_{2d})
 \ \forall x_1, \dots, x_{2d} \in  V_{\ell}(X).$$
We call elements of $\tate_{2d}(X,\ell)$ {\sl Tate forms of degree $2d$}\begin{footnote}{In \cite{ZarhinEssen} we called them admissible forms.}\end{footnote}.
\end{sect}

\begin{rem}
\label{tateBis}
Clearly, $\tate_{2d}(X,\ell)$ consists of all $\psi \in \Hom_{\Q_{\ell}}(\Lambda^{2d} V_{\ell}(X),\Q_{\ell})$
such that
%\begin{equation}
%\label{tateVar}
$$\psi(\Fr_X^{-1} (v_1), \dots, \Fr_X^{-1}(v_{2d}))=q^{-d} \cdot \psi(v_1,\dots, v_{2d}) \ \forall v_1, \dots, v_{2d} \in  V_{\ell}(X).$$
%\tate_{2d}(X,l)=\{\psi \in \Hom_{\Q_{\ell}}(\Lambda^{2d} V_{\ell}(X),\Q_{\ell})\mid 
%\end{equation}
%$$\psi(\Fr_X^{-1} (v_1), \dots, \Fr_X^{-1}(v_{2d}))=q^{-d} \cdot \psi(v_1,\dots, v_{2d}) \ \forall v_1, \dots, v_{2d} \in  V_{\ell}(X)\}=$$
Since $\Fr_X$ acts on $V_{\ell}(X)$ as $\rho_{\ell}(\sigma_k)$, the subspace $\tate_{2d}(X,l)$ consists of all $\psi \in \Hom_{\Q_{\ell}}(\Lambda^{2d} V_{\ell}(X),\Q_{\ell})$
such that
%$$\{\psi \in \Hom_{\Q_{\ell}}(\Lambda^{2d} V_{\ell}(X),\Q_{\ell})\mid $$
$$\psi(\rho_{\ell}(\sigma_k)^{-1} (v_1), \dots, \rho_{\ell}(\sigma_k)^{-1}(v_{2d}))=\chi_{\ell}(\sigma_k)^{-d} \cdot \psi(v_1,\dots, v_{2d}) \ \forall v_1, \dots, v_{2d} \in  V_{\ell}(X).$$
Since $\sigma_k$ is a topological generator of $\Gal(k)$, the subspace $\tate_{2d}(X,l)$ consists of all $\psi \in \Hom_{\Q_{\ell}}(\Lambda^{2d} V_{\ell}(X),\Q_{\ell})$
such that
%\begin{equation}
%\label{tateVar2}
$$\psi(\rho_{\ell}(\sigma)^{-1} (v_1), \dots, \rho_{\ell}(\sigma)^{-1}(v_{2d}))=\chi_{\ell}(\sigma)^{-d} \cdot \psi(v_1,\dots, v_{2d})$$
$$  \forall \sigma \in \Gal(k), v_1, \dots, v_{2d} \in  V_{\ell}(X).$$
%\tate_{2d}(X,\ell)=\{\psi \in \Hom_{\Q_{\ell}}(\Lambda^{2d} V_{\ell}(X),\Q_{\ell})\mid 
%\end{equation}
%$$\psi(\rho_{\ell}(\sigma)^{-1} (v_1), \dots, \rho_{\ell}(\sigma)^{-1}(v_{2d}))=\chi_{\ell}(\sigma)^{-d} \cdot \psi(v_1,\dots, v_{2d}) \ \forall \sigma \in \Gal(k); v_1, \dots, v_{2d} \in  V_{\ell}(X)\}.$$
\end{rem}

\begin{rem}
\label{TateEigenQ}
In the notation of  Section \ref{linAlgebra}, \eqref{notationQl} and \eqref{notationQlbar},
$$\tate_{2d}(X,\ell)=\wedge_K^{2d}(V^{*})(q^d)$$
is the eigenspace of  $K$-linear operator
$\wedge^{2d}(A^{*}) \colon \wedge_K^{2d}(V^{*}) \to \wedge_K^{2d}(V^{*}) $ attached to the eigenvalue $q^d$.
\end{rem}

\begin{sect}
For each integer $d\ge 3$ the exterior product map
$$\Hom_{\Q_{\ell}}(\Lambda^{2(d-1)} V_{\ell}(X),\Q_{\ell}) \otimes_{\Q_{\ell}} \Hom_{\Q_{\ell}}(\Lambda^{2} V_{\ell}(X),\Q_{\ell})
\to   \Hom_{\Q_{\ell}}(\Lambda^{2d} V_{\ell}(X),\Q_{\ell}) ,$$
$$   \phi\otimes \psi \mapsto \phi \wedge \psi$$
induces the $\Q_{\ell}$-linear map
\begin{equation}
\label{wedgeTate}
\tate_{2(d-1)}(X,\ell) \otimes_{\Q_{\ell}} \tate_{2}(X,\ell) \to \tate_{2d}(X,\ell), \ \phi\otimes\psi \mapsto \phi \wedge \psi.
\end{equation}

\begin{defn}
Let $d>1$ be an integer. An $\ell$-adic  {\sl Tate form} of degree $2d$ is called {\sl exceptional} if it does {\sl not} lie
in the 
%$\Q_{\ell}$-linear span of the
 image of map \eqref{wedgeTate}.
\end{defn}
\end{sect}

\begin{lem}
\label{exceptreduced}
Let $d$ be a positive integer such that
$$2 \le d \le \dim(X).$$
Let $\ell \ne p$ be a prime.
Then  the following conditions are equivalent.
\begin{itemize}
\item[(a)]
There exists an exceptional $\ell$-adic Tate form on $X$ of degree $2d$.
\item[(b)]
There exists an admissible reduced function $e \colon R_X \to \Z$ of weight $2d$ such that
$$0 \le e(\alpha) \le \mult_X(\alpha) \ \forall \alpha\in R_X.$$
\end{itemize}
\end{lem}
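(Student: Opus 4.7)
The plan is to recognize this lemma as a direct translation of Proposition \ref{exoticKE} into the language of Tate forms. First I would unpack the linear-algebra setup following \eqref{notationQl}--\eqref{notationQlbar}: take $K=\Q_\ell$, $V=V_\ell(X)$, $E=\bar{\Q}_\ell$, $A=\Fr_X$, so that $\mathrm{spec}(A)=R_X$ and $\mult_A=\mult_X$. Two preliminary checks are needed to invoke Proposition \ref{exoticKE}. The operator $A$ is invertible because $\det(\Fr_X)=q^g\ne0$, and $A_E=\overline{\Fr}_X$ is diagonalizable by Weil's semisimplicity theorem. Apply Proposition \ref{exoticKE} with $j=2d$, $j_1=2d-2$, $j_2=2$, $\lambda_1=q^{d-1}$, $\lambda_2=q$; the hypothesis $d\ge 2$ guarantees $j>2$ and $d\le\dim(X)=g$ guarantees $2d\le\dim(V)$.

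Next I would invoke Remark \ref{TateEigenQ} to identify the eigenspaces $\wedge_K^{2d}(V^*)(q^d)$, $\wedge_K^{2d-2}(V^*)(q^{d-1})$, $\wedge_K^2(V^*)(q)$ with $\tate_{2d}(X,\ell)$, $\tate_{2(d-1)}(X,\ell)$, $\tate_2(X,\ell)$ respectively. Under this identification the $K$-linear map \eqref{imageWDGK} of Proposition \ref{exoticKE} \emph{is} the wedge-product map \eqref{wedgeTate}. Therefore condition (a)---the existence of an exceptional Tate form of degree $2d$---is equivalent to the non-surjectiveness of \eqref{imageWDGK}, which by Proposition \ref{exoticKE} is equivalent to the existence of a function $e\colon R_X\to\Z_+$ enjoying conditions (i)--(iv) of that proposition.

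It then remains to verify that conditions (i)--(iv) of Proposition \ref{exoticKE}(b) coincide with the statement of (b) of the present lemma. Condition (i) together with $e\ge 0$ gives $0\le e(\alpha)\le \mult_X(\alpha)$. Condition (iii) reads $\prod_\alpha \alpha^{e(\alpha)}=q^d$, i.e.\ $e$ is admissible of degree $d$. Combined with (ii), $\sum e(\alpha)=2d$, and since $e\ge 0$ this means $\wt(e)=2d$; in particular $\deg(e)=d\ge 2\ge 1$. For condition (iv): because $R_X$ is closed under $\alpha\mapsto q/\alpha$, sub-case (1) of (iv) never occurs, so (iv) says exactly that whenever $e(\alpha)>0$ either $e(q/\alpha)=0$ (for $\alpha\ne q/\alpha$) or $e(\alpha)=1$ (for $\alpha^2=q$). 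These are precisely conditions (ii) and (iii) in the definition of a reduced admissible function.

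The main work has already been done in Proposition \ref{exoticKE}; there is no substantive obstacle at this stage, only bookkeeping. The only point requiring care is verifying that the three sub-cases (1)--(3) of (iv) in Proposition \ref{exoticKE} specialize correctly: once (1) is seen to be vacuous in our setting, the remaining two sub-cases match the reduced-admissible conditions verbatim, and the equivalence (a)$\iff$(b) follows.
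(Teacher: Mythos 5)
Your proposal is correct and follows essentially the same route as the paper: set up the linear-algebra dictionary \eqref{notationQl}--\eqref{notationQlbar}, identify (a) with the non-surjectiveness of the map \eqref{imageWDGK} via Remark \ref{TateEigenQ}, apply Proposition \ref{exoticKE} with $j=2d$, $\lambda_1=q^{d-1}$, $\lambda_2=q$, and then match conditions (i)--(iv) of that proposition to the definitions of weight, admissibility, and reducedness. Your extra care in noting that sub-case (1) of (iv) is vacuous (because $R_X$ is stable under $\alpha\mapsto q/\alpha$, with $\lambda_2=q$) and that sub-cases (2) and (3) reproduce conditions (ii) and (iii) of the definition of a reduced admissible function is a useful elaboration of a step the paper leaves implicit.
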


\begin{proof}
In the notation of \eqref{imageWDGK}, \eqref{notationQl} and \eqref{notationQlbar},  it follows from Remark \ref{TateEigenQ} that
property (b) is equivalent to the {\sl non-surjectiveness} of 
$$\wedge_K^{j-2}(V^{*})(\lambda_1) \otimes_K \wedge_K^2(V^{*})(\lambda_2) \to \wedge_K^{j}(V^{*})(\lambda_1\lambda_2),  \psi\otimes \phi \mapsto \psi\wedge \phi$$
with 
$$j=2d,  \lambda_1=q^{d-1}, \lambda_2=q, \lambda_1 \lambda_2=q^d.$$
By Proposition \ref{exoticKE}, the  non-surjectiveness  of this map is equivalent to the existence of a function
$e \colon \mathrm{spec}(A) \to \Z_{+}$ that enjoys properties (i)-(iv) of Proposition \ref{exoticKE}. Since $\mathrm{spec}(A)=R_X$, we may view $e$ as a function
$e \colon R_X \to \Z_{+}$.  Now property (ii) means that $e$ has weight $2d$, property (iii) that $e$ is admisible, and property (iv) that $e$ is reduced. As for property (i), it means that
$$e(\alpha) \le \mult_A(\alpha)=\mult_X(\alpha) \ \forall \alpha \in \mathrm{spec}(A)=R_X.$$
This implies that properties (i)-(iv) of Theorem \ref{exoticKE} are equivalent to property (b) of Lemma \ref{exceptreduced}.
It follows that
 properties (a) and (b) of  Lemma \ref{exceptreduced} are equivalent.
\end{proof}

 \begin{sect}
 \label{etaleT}{\bf Twists and Tate classes}.
 %Now let us discuss Tate classes.
%In order to state our second main result,  we need to 
Let us %fix an algebraic closure $\bar{k}$ of $k$ and 
consider an abelian variety $\bar{X}=X \times_k \bar{k}$ over the algebraic closure $\bar{k}$ of $k$
and its \'etale $\ell$-adic cohomology groups $\mathrm{H}^j(\bar{X},\Q_{\ell})$ \cite{Tate0,Kleiman,Katz,Berkovich}.  Here $\ell$ is any prime different from $\fchar(k)$,  $j$ any nonnegative integer, 
and $\mathrm{H}^j(\bar{X},\Q_{\ell})$ is a certain finite-dimensional $\Q_{\ell}$-vector space endowed with a continuous linear action of the absolute Galois group $\Gal(k):=\Gal(\bar{k}/k)$ of $k$.
We write
$$\chi_{\ell} \colon \Gal(k) \to \Z_{\ell}^{*}\subset \Q_{\ell}^{*}$$
for the $\ell$-adic {\sl cyclotomic character} (see Section \ref{neat} above). There exists a certain ``naturally defined'' one-dimensional
$\Q_{\ell}$-vector space $\Q_{\ell}(1)$  (that was denoted by $W$ in \cite[Sect. 2]{Tate0}) endowed with the natural continuous linear action of $\Gal(k)$ defined by the cyclotomic character
$$\chi_{\ell}  \colon \Gal(k) \to \Q_{\ell}^{*}=\Aut_{\Q_{\ell}}(\Q_{\ell}(1))$$
(see \cite{Tate0,Kleiman,Katz,Milne}). Namely,  $\Q_{\ell}(1)=\Z_{\ell}(1)\otimes_{\Z_{\ell}}\Q_{\ell}$ where  $\Z_{\ell}(1)$ is the projective limit of multiplicative groups (finite Galois modules) $\mu_{\ell^n}$
of $\ell^n$th roots of unity in $\bar{k}$.

Let us fix once and for all an {\sl $\ell$-adic orientation}, i.e., an isomorphism of $\Q_{\ell}$-vector spaces
$$\Q_{\ell}(1) \cong \Q_{\ell},$$
which allows us to identify $\Q_{\ell}$ not only with $\Q_{\ell}(1)$ but also with all {\sl tensor powers} $\Q_{\ell}(i)$ \cite{Tate0,Kleiman,Katz,Berkovich,Tate94} of $\Q_{\ell}(1)$.

 Let $i$ be an integer. Let us consider the {\sl twist} $\mathrm{H}^j(\bar{X},\Q_{\ell})(i)$
of the Galois module $\mathrm{H}^j(\bar{X},\Q_{\ell})$
by character $\chi_{\ell}^i$ of $\Gal(k)$ \cite{Tate0,Kleiman,Katz,Tate94}. In other words, $\mathrm{H}^j(\bar{X},\Q_{\ell})(i)$ coincides with $\mathrm{H}^j(\bar{X},\Q_{\ell})$ as the $\Q_{\ell}$-vector space but if 
$$ \sigma,c \mapsto \sigma(c) \ \forall \sigma \in \Gal(k), \ c \in \mathrm{H}^j(\bar{X},\Q_{\ell})$$
is the Galois action on $\mathrm{H}^j(\bar{X},\Q_{\ell})$
then in $\mathrm{H}^j(\bar{X},\Q_{\ell})(i)$ a Galois automorphism $\sigma$ sends $c$ to $\chi_{\ell}(\sigma)^i\sigma(c)$.

If $W$ is a Galois-invariant $\Q_{\ell}$-vector subspace in $\mathrm{H}^j(\bar{X},\Q_{\ell})$  then we write $W(i)$ for the same $\Q_{\ell}$-vector subspace in $\mathrm{H}^j(\bar{X},\Q_{\ell})(i)$.
Clearly, $W(i)$ is a Galois-invariant subspace of $\mathrm{H}^j(\bar{X},\Q_{\ell})(i)$ but {\sl not} necessarily isomorphic to $W$ as Galois module.

\begin{rems}
\label{twistCup}
\begin{itemize}
\item[(i)]
If $j_1,j_2$ are any nonnegative integers then the Galois-equivariant $\Q_{\ell}$-bilinear cup product in the cohomology of $\bar{X}$ leads to a Galois-equivariant $\Q_{\ell}$-linear map
\begin{equation}
\label{cupH}
\mathrm{H}^{j_1}(\bar{X},\Q_{\ell})\otimes_{\Q_{\ell}} \mathrm{H}^{j_2}(\bar{X},\Q_{\ell}) \to \mathrm{H}^{j_1+j_2}(\bar{X},\Q_{\ell}), \ c_1\otimes c_2 \mapsto c_1 \cup c_2,
\end{equation}
which, in turn, gives rise to 
 the natural  Galois-equivariant $\Q_{\ell}$-linear map \cite{Tate0,Kleiman}
\begin{equation}
\label{cupProduct}
\mathrm{H}^{j_1}(\bar{X},\Q_{\ell})(i_1)\otimes_{\Q_{\ell}} \mathrm{H}^{j_2}(\bar{X},\Q_{\ell})(i_2) \to \mathrm{H}^{j_1+j_2}(\bar{X},\Q_{\ell})(i_1+i_2),  \ c_1\otimes c_2 \mapsto c_1 \cup c_2.
\end{equation}
\item[(ii)]
Let $W_1$ (resp. $W_2$) is s a Galois-invariant $\Q_{\ell}$-vector subspace in $\mathrm{H}^{j_1}(\bar{X},\Q_{\ell})$  (resp. in $\mathrm{H}^{j_2}(\bar{X},\Q_{\ell})$) 
and $W \subset \mathrm{H}^{j_1+j_2}(\bar{X},\Q_{\ell})$ be the image of subspace
$$W_1\otimes_{\Q_{\ell}}W_2 \subset \mathrm{H}^{j_1}(\bar{X},\Q_{\ell})\otimes_{\Q_{\ell}} \mathrm{H}^{j_2}(\bar{X},\Q_{\ell})$$
under the map \eqref{cupH}. It follows readily that the twist
$$W(i_1+i_2)\subset \mathrm{H}^{j_1+j_2}(\bar{X},\Q_{\ell})(i_1+i_2)$$
coincides with the image of subspace
$$W_1(i_1)\otimes_{\Q_{\ell}}W_2(i_2) \subset \mathrm{H}^{j_1}(\bar{X},\Q_{\ell})(i_1)\otimes_{\Q_{\ell}} \mathrm{H}^{j_2}(\bar{X},\Q_{\ell})(i_2)$$
under the map \eqref{cupProduct}.
\end{itemize}
\end{rems}

\begin{defn}
\label{WlT}
Let $d$ be a nonnegative integer. Let us consider the $\Q_{\ell}$-vector subspace 
$$\T_{\ell,d}(X):=\mathrm{H}^{2d}(\bar{X},\Q_{\ell})(d)^{\Gal(k)}$$ of {\sl Galois invariants} in $\mathrm{H}^{2d}(\bar{X},\Q_{\ell})(d)$
and a {\sl weight} $\Q_{\ell}$-vector  subspace
$$W_{\ell,d}(X):=\{c \in \mathrm{H}^{2d}(\bar{X},\Q_{\ell})\mid \sigma(x)=\chi_{\ell}(\sigma)^{-d}c \ \forall \sigma \in \Gal(k)\}$$
in $\mathrm{H}^{2d}(\bar{X},\Q_{\ell})$. It follows from the very definitions that
\begin{equation}
\label{TateTwistClass}
\T_{\ell,d}(X)=W_{\ell,d}(X)(d).
\end{equation}
\end{defn}

\begin{rem}
\label{product TateCl}
Let $d_1$ and $d_2$ be nonnegative integers.  It follows from the Galois equivariance  of maps \eqref{cupH} and \eqref{cupProduct} combined with Remark \ref{twistCup}(ii)
that  the image $W_{\ell,d_1,d_2}(X)$ of 
$$W_{\ell,d_1}(X)\otimes W_{\ell,d_2}(X)\to \mathrm{H}^{2(d_1+d_2)}(\bar{X},\Q_{\ell}), \   c_1\otimes c_2 \to c_1\cup c_2$$
lies in $W_{\ell,d_1+d_2}(X)$.
Similarly, the image $\T_{\ell,d_1,d_2}(X)$ of 
$$\T_{\ell,d_1}(X)\otimes \T_{\ell,d_2}(X)\to \mathrm{H}^{2(d_1+d_2)}(\bar{X},\Q_{\ell})(d_1+d_2), \   c_1\otimes c_2 \to c_1\cup c_2$$
lies in $\T_{\ell,d_1+d_2}(X)$. In addition, it follows from \eqref{TateTwistClass} that
$$\T_{\ell,d_1,d_2}(X)=W_{\ell,d_1,d_2}(X)(d_1+d_2).$$
\end{rem}

\begin{defn}
\label{tateDeinition}
Let $\ell \ne \fchar(k)$ be a prime and  $d$  a nonnegative integer.  
%Let $c \in \mathrm{H}^{2d}(\bar{X},\Q_{\ell})(d)$ be a (twisted) $\ell$-adic cohomology class.
\begin{itemize}
\item[(i)]
Elements of $\T_{\ell,d}(X)$ are called $\ell$-adic  {\sl Tate classes of dimension $2d$} on $X$.
%Class $c$ is called an $\ell$-adic  {\sl Tate class} of dimension $2d$ on $X$ if it is a {\sl Galois-invariant} element of $\mathrm{H}^{2d}(\bar{X},\Q_{\ell})(d)$.
\item[(ii)]
A nonzero $2d$-dimensional Tate class $c$ is called {\sl exotic} if $d\ge 1$ and $c$ cannot be presented as a linear combination of products of $d$ Tate classes of dimension 2 with coefficients in $\Q_{\ell}$.
\item[(iii)]
A Tate class $c$ of dimension $2d$ is called {\sl very exotic} if $d\ge 1$ and $c$ cannot be presented as a linear combination  with coefficients in $\Q_{\ell}$ of products of  Tate classes of dimension $2d-2$ and $2$, i.e.,
$c$ does {\sl not} belong to $\T_{\ell,d-1,1}(X)$.
\end{itemize}
\end{defn}

\begin{rems}
\label{remTate}
\begin{itemize}
\item[]
\item[(i)]
Let $Z$ be a closed irreducible subvariety  of codimension $d$ in $X$. The choice of the $\ell$-adic orientation allows us to define
 the $\ell$-adic class $\mathrm{cl}(Z)\in \T_{\ell,d}(X)\subset\mathrm{H}^{2d}(\bar{X},\Q_{\ell})(d)$ of $Z$ \cite{Tate0,Tate94}.
%A theorem of Tate \cite{Tate1} asserts that $\T_{\ell,2}(X)$ is spanned by {\sl divisor classes}.
%each $2$-dimensional Tate class is a linear combination of 
%with coefficients in $\Q_{\ell}$.
Tate  \cite{Tate0,Tate94} conjectured that for all nonnegative integer $d$ the subspace  $\T_{\ell,d}(X)$ is spanned by all  $\mathrm{cl}(Z)$ 
and proved it for $d=1$ \cite{Tate1}.
%every $2d$-dimensional Tate class is a linear combination of 
%$\ell$-adic {\sl classes} of codimension $d$ closed irreducible subvarietis in $X$.
% with coefficients in $\Q_{\ell}$ 

\item[(ii)]
If $d$ is a nonnegative integer and $d \le g$ then it is known  \cite{Tate0,Tate94} that $\T_{\ell,d}(X) \ne \{0\}$.

%Clearly, all non-exotic (resp. not very exotic) $2d$-dimensional $\ell$-adic Tate classses constitute a $\Q_{\ell}$-vector subspace in $\T_{\ell,d}(X)$.
%\item[(iii)]
%The Galois-equivariance of the cup product \eqref{cupProduct} implies that a cup product of $\ell$-adic Tate classes of dimensions $2d_1$ and $2d_2$
%is an $\ell$-adic Tate class of dimension $2(d_1+d_2)$.
\item[(iii)]
Clearly, all $2$-dimensional Tate classes are neither exotic nor  very exotic. Hence,
the existence of an exotic (or very exotic) Tate class of dimension $d$ implies readily that
$$2\le d \le g=\dim(X),$$
because $\mathrm{H}^{2d}(\bar{X},\Q_{\ell})=\{0\}$ for all $d>g$,
and, therefore, all $2d$-dimensional Tate classes are just zero. (Actually, it is known that
$\mathrm{H}^{2g}(\bar{X},\Q_{\ell})(g)$ is a one-dimensional $\Q_{\ell}$-vector space generated by the $g$th self-product of the class of a hyperplane
section of $X$ \cite{Tate0}. Therefore, there are no non-exotic Tate classes of dimension $2g$.)

\item[(iv)]
Clearly, every {\sl very exotic} Tate class is {\sl exotic}. 

Conversely, suppose that  there exists an {\sl exotic} $2d$-dimensional $\ell$-adic Tate class on $X$. I claim that there is a positive integer $d^{\prime} \le d$ such that
there exists a {\sl very exotic} $2d^{\prime}$-dimensional $\ell$-adic Tate class on $X$. Indeed, decreasing $d$ if necessary, we may and will assume that $d$ 
 is the smallest positive integer such that there is an exotic $\ell$-adic $2d$-dimensional Tate class  on $X$. Let $c$ be such a class. Then  $d>1$.
 
Assume that $c$ is {\sl not} very exotic. Then $c$ is a linear combination of cup products $h_i \cup c_i$ where
all $c_i$ are nonzero Tate classes of dimension $2$ and all $h_i$ are nonzero Tate classes of dimension $2(d-1)$. Since $c$ is exotic, 
 there is an index $i$ such that $h_i$ is exotic. But exotic $h_i$ is $2(d-1)$-dimensional, which contradicts the minimality of $d$. The obtained contradiction
 implies that $c$ itself is very exotic, which ends the proof.

It follows that {\sl $X$ carries an $\ell$-adic exotic Tate class  if and only if it carries a very exotic $\ell$-adic Tate class} (may be, of different dimension).

\end{itemize}
\end{rems}

%recall that an $\ell$-adic Tate cohomology class (see Section \ref{TateProof} below) 

\end{sect}

\begin{sect}
\label{etale}
{\bf \'Etale cohomology of abelian varieties}.
Let us consider the abelian variety $\bar{X}=X\times_k\bar{k}$ over $\bar{k}$.
Let $j$ be a nonnegative integer and  let $\mathrm{H}^{j}(\bar{X},\Q_{\ell})$ be the $j$th \'etale $\ell$-adic cohomology group of $\bar{X}$, which is a finite-dimensional $\Q_{\ell}$-vector space endowed with the canonical continuous linear action of $\Gal(k)$ \cite{Tate0,Kleiman,Milne}.
There is a canonical $\Gal(k)$-equivariant isomorphism of graded $\Q_{\ell}$-algebras  (\cite{Tate0,Kleiman,Berkovich}, \cite[Sect. 12]{Milne})
\begin{equation}
\label{HopfIso}
\oplus_{j=0}^{2\dim(X)} \mathrm{H}^{j}(\bar{X},\Q_{\ell})\cong \oplus_{j = 0}^{2\dim(X)} \Hom_{\Q_{\ell}}(\Lambda^{j}_{\Q_{\ell}} V_{\ell}(X),\Q_{\ell}).
\end{equation}
Its Galois equivariance combined with Remark \ref{tateBis} imply that (in the notation of Definition \ref{WlT}) the map \eqref{HopfIso}  induces for all nonnegative integers $d \le \dim(X)$ a $\Q_{\ell}$-linear isomorphism between
$$W_{\ell,d,}(X)=\{c \in \mathrm{H}^{2d}(\bar{X},\Q_{\ell})\mid \sigma(c)=\chi_{\ell}(\sigma)^{-d}c \ \forall \sigma \in \Gal(k) \}=$$
$$\{c \in \mathrm{H}^{2d}(\bar{X},\Q_{\ell})\mid \sigma_k(c)=\chi_{\ell}(\sigma_k)^{-d} c\}
\subset \mathrm{H}^{2d}(\bar{X},\Q_{\ell})$$
and the subspace 
              $$\tate_{2d}(X,\ell)\subset \Hom_{\Q_{\ell}}(\Lambda^{2d}_{\Q_{\ell}} V_{\ell}(X),\Q_{\ell})$$
              of $\ell$-adic Tate forms of degree $2d$ on $X$.
              Recall (see Definitions \ref{WlT} and \ref{tateDeinition}) that 
              %Clearly, in the notation of Subsection \ref{etaleT}, 
              the twist 
              $W_{\ell,d}(X)(d)\subset  \mathrm{H}^{2d}(\bar{X},\Q_{\ell})(d)$ coincides with the subspace  $\T_{\ell,d}(X)$  
              of $2d$-dimensional $\ell$-adic Tate classes on $X$.
              %Galois-invariants in $\mathrm{H}^{2d}(\bar{X},\Q_{\ell})(d)$ ,
              %i.e., with the subspace of all
              %$2d$-dimensional  $\el\ell$-adic Tate classes  on $X$.
Recall that map \eqref{HopfIso} is  a $\Q_{\ell}$-algebra isomorphism. 
%Combining Remarks \ref{twistCup} and 
Applying Remark \ref{product TateCl}  to $d_1=d-1$ and $d_2=1$, we obtain that {\sl the existence of a very exotic  $\ell$-adic Tate class of dimension $2d$ on $X$ is equivalent to the existence of an exceptional $\ell$-adic Tate form of degree $2d$ on $X$.}

\end{sect}

We will need to state explicitly the following useful assertion.

\begin{lem}
\label{equivTateForms}
Let $X$ be an abelian variety over $k$. Let $\ell \ne \fchar(k)$ be a prime.
Then the  following three conditions are equivalent.

\begin{itemize}
\item[(i)]
$X$ carries an exotic $\ell$-adic Tate class.

\item[(ii)]
$X$ carries a very exotic $\ell$-adic Tate class.
\item[(iii)]
There exists an exceptional $\ell$-adic Tate form  on $X$.
\end{itemize}

In addition, the validity of equivalent conditions (i)-(iii) does not depend on a choice of $l$.
\end{lem}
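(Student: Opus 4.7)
The plan is to assemble three equivalences, all of which are essentially already in place in the preceding text, and then deduce the $\ell$-independence from the fact that the combinatorial data $R_X$ and $\mult_X$ are intrinsic to $X$.

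First I would prove (i)$\Longleftrightarrow$(ii). The implication (ii)$\Longrightarrow$(i) is trivial since every very exotic class is exotic by Remark \ref{remTate}(iv). For the converse, I would repeat the minimality argument of Remark \ref{remTate}(iv): choose $d$ minimal with the property that $X$ carries a $2d$-dimensional exotic $\ell$-adic Tate class $c$, necessarily with $d\ge 2$. If $c$ were not very exotic, then $c\in\T_{\ell,d-1,1}(X)$ would be a $\Q_\ell$-linear combination of cup products $h_i\cup c_i$ with $c_i\in\T_{\ell,1}(X)$ and $h_i\in\T_{\ell,d-1}(X)$; exoticness of $c$ forces at least one $h_i$ to be exotic, contradicting minimality of $d$. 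Hence $c$ is very exotic.

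Next I would handle (ii)$\Longleftrightarrow$(iii). Via the Galois-equivariant Hopf-algebra isomorphism \eqref{HopfIso}, the subspace $W_{\ell,d}(X)\subset\mathrm{H}^{2d}(\bar X,\Q_\ell)$ corresponds to $\tate_{2d}(X,\ell)$, and after twisting by $\chi_\ell^d$, the space $\T_{\ell,d}(X)=W_{\ell,d}(X)(d)$ corresponds to $\tate_{2d}(X,\ell)$ as well. Since \eqref{HopfIso} is an isomorphism of graded algebras, it carries the cup product to the exterior product, so it sends the image of $\T_{\ell,d-1}(X)\otimes\T_{\ell,1}(X)$ onto the image of map \eqref{wedgeTate}. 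Therefore a class in $\T_{\ell,d}(X)$ is very exotic if and only if the corresponding element of $\tate_{2d}(X,\ell)$ is exceptional; this is precisely the statement recorded at the end of Section \ref{etale}.

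For the independence from the auxiliary prime $l$, I would invoke Lemma \ref{exceptreduced}, which translates the existence of an exceptional $\ell$-adic Tate form of degree $2d$ on $X$ into the purely arithmetic condition that there exists an admissible reduced function $e\colon R_X\to\Z$ of weight $2d$ with $0\le e(\alpha)\le\mult_X(\alpha)$ for all $\alpha\in R_X$. Neither $R_X$, nor $\mult_X$, nor the notions admissible/reduced depend on $\ell$; so if this condition is satisfied for some $d\le\dim(X)$, then for every prime $l\ne\fchar(k)$ there is an exceptional $l$-adic Tate form on $X$ of the same degree $2d$. Combining with the already proved equivalences (i)$\Longleftrightarrow$(ii)$\Longleftrightarrow$(iii), the existence of an exotic $\ell$-adic Tate class on $X$ is equivalent to the same arithmetic condition and hence is $\ell$-free.

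The only nontrivial step is (ii)$\Longleftrightarrow$(iii); the rest is formal. What needs care there is verifying that the algebra isomorphism \eqref{HopfIso} interacts correctly with the twists by $\chi_\ell^d$, so that the image of the cup-product map on Tate classes matches the image of the wedge-product map on Tate forms; this is guaranteed by Remark \ref{twistCup}(ii) applied with $(j_1,i_1)=(2(d-1),d-1)$, $(j_2,i_2)=(2,1)$.
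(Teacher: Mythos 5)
Your proof is correct and follows exactly the same route as the paper's: (i)$\Leftrightarrow$(ii) is the minimality argument of Remark \ref{remTate}(iv), (ii)$\Leftrightarrow$(iii) is the identification via \eqref{HopfIso} and the twist bookkeeping spelled out at the end of Subsection \ref{etale} using Remark \ref{twistCup}(ii), and the $\ell$-independence comes from observing that condition (b) of Lemma \ref{exceptreduced} is purely arithmetic. The paper merely cites these earlier passages; you re-derive them, but the substance is identical.
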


\begin{proof}[Proof of Lemma \ref{equivTateForms}]
The equivalence of (ii) and (iii) follows readily from the arguments at the end of Subsection \ref{etale}.
The equivalence of (i) and (ii) was already proven in Remark \ref{remTate}(iv).

Notice that property (b) of Lemma  \ref{exceptreduced} does not depend on the choice of $\ell$. Now Lemma \ref{exceptreduced} implies that 
the validity of (iii)  does {\sl not} depend on the choice of $\ell$.
\end{proof}

\begin{proof}[Proof of Theorem \ref{mainTate}]

By a theorem  of Tate \cite{Tate1}, if $Y$ is any abelian variety over $k$ (e.g., $Y=X^n$) then every element of $\T_{\ell,1}(Y)$ 
is a linear combination of divisor classes on $Y$ with coefficients in $\Q_{\ell}$.

%Let $Y$ be any abelian variety over $k$ (e.g., $Y=X^n$). It follows from Remark \ref{remTate}(iv)
%that  $Y$ does {\sl not}  carry carries an exotic $\ell$-adic Tate class if and only if it does not carry aa very exotic $\ell$-adic Tate class as well. Arguments of Subsection \ref{etale} applied to $Y$ instead of $X$ imply that
%there is an exceptional $\ell$-adic Tate form of some degree $2d$ on $Y$.
%on $X^n$ for some positive integer $n$.

Suppose that there is an  exotic $\ell$-adic Tate class on $X^n$ for some positive integer $n$.  It follows from  Lemma \ref{equivTateForms} (applied to $Y=X^n$
instead of $X$) that there is an exceptional $\ell$-adic Tate form on $X^n$. 
%for some positive integer $n$.
In light of Lemma \ref{exceptreduced} (applied to $Y$ instead of $X$),
%combined with arguments of Subsection \ref{etale} applied to $Y=X^n$,
 there exists an admissible reduced function
$R_X=R_Y \to \Z_{+}$. In light of Theorem \ref{mainRelation}, there exists an  admissible reduced function
$e \colon R_X \to \Z_{+}$ of weight $\le N(g)$. This means that
$$\wt(e)=\sum_{\alpha\in R_X}e(\alpha) \le 2 N(g);$$
in particular,
$$0 \le e(\alpha) \le 2 N(g) \ \forall \alpha \in R_X.$$
Let us put $Z=X^{2N(g)}$ and consider $e$ as the reduced admissible function
$$R_Z=R_X \to \Z_{+},  \ \alpha \mapsto e(\alpha).$$
In light of Remark \ref{YXm} applied to $m=2N(g)$,
$$e(\alpha) \le  2 N(g)\le \mult_Z(\alpha) \ \forall \alpha \in R_Z=R_X.$$
It follows from Lemma \ref{exceptreduced} that  there is an exceptional $\ell$-adic Tate form on $X^{2N(g)}=Z$.
Applying Lemma  \ref{equivTateForms} to $Z$, we obtain that  there is an  exotic $\ell$-adic Tate class on $Z=X^{2N(g)}$.
%In order to finish the proof, one has only to recall that according to a theorem of Tate \cite{Tate1},
%the subspace of Tate classes in  $\mathrm{H}^{2}(\bar{Z},\Q_{\ell})$
%consists of all linear combinations of all  divisor classes on $Z$ and apply  arguments of Subsection \ref{etale} to $Z$ (instead of $X$).
Now the last assertion of Lemma \ref{equivTateForms} implies that  there is an  exotic $l$-adic Tate class on $Z=X^{2N(g)}$
for all primes $l \ne \fchar(k)$. This ends the proof.
\end{proof}

\begin{proof}[Proof of Theorem \ref{semigroupTate}]
In light of arguments of Subsection \ref{etale} combined with Remark \ref{product TateCl}, it suffices to check that each $\ell$-adic Tate form of any even degree $2m$ on  $X^n$
can be presented as a linear combination of exterior products of $\ell$-adic Tate forms of degree at most $H(g)$.  Let us prove it, using induction by $m$.

The assertion is obviously true
for all $m \le H(g)/2$.  Suppose that $m> H(g)/2$.  First, notice that
$$R_{X^n}=R_X \ \forall n.$$
Applying Theorem \ref{semigroupADM}, we conclude that the conditions of Proposition \ref{lastEff} are fulfilled for
$$K=\Q_{\ell},  V=V_{\ell}(X^n),  A=\Fr_{X^n}: V_{\ell}(X^n) \to V_{\ell}(X^n), $$
$$  \mathrm{spec}(A)=R_{X^n}=R_X,  \  h=H(g)/2.$$
Applying Proposition \ref{lastEff}, we conclude that each $\ell$-adic Tate form of degree $2m$ on  $X^n$ can be presented
as a linear combination of wedge products 
$$\psi_{m-j}\wedge \phi_j \ (j=1, \dots, H(g)/2))$$
 where
$\psi_{m-j}$ is an $\ell$-adic Tate form of degree $2(m-j)$ on $X^n$  and $\phi_j$ is an $\ell$-adic Tate form of degree $2j\le H(g)$ on $X^n$. Applying the induction assumption to all
$\psi_{m-j}$'s, we conclude that each $\ell$-adic Tate form of  degree $2m$ on  $X^n$
can be presented as a linear combination of exterior products of Tate forms of degree at most $H(g)$.
This ends the proof.
\end{proof}

\end{document}